\theoremstyle{plain}
\newtheorem{theorem}{Theorem}[section]
\newtheorem{lemma}[theorem]{Lemma}
\newtheorem{prop}[theorem]{Proposition}
\newtheorem{corollary}[theorem]{Corollary}
\newtheorem{proposition}[theorem]{Proposition}
\theoremstyle{definition}
\numberwithin{equation}{section}
\newtheorem*{theorem*}{Theorem} 
\newcommand{\Z}{{\mathbb Z}}
\newcommand{\R}{{\mathbb R}}
\def\wh{\widehat}
\DeclareFontFamily{U}{mathx}{\hyphenchar\font45}
\DeclareFontShape{U}{mathx}{m}{n}{
<5> <6> <7> <8> <9> <10>
<10.95> <12> <14.4> <17.28> <20.74> <24.88>
mathx10
}{}
\DeclareSymbolFont{mathx}{U}{mathx}{m}{n}
\DeclareMathAccent{\widecheck}{0}{mathx}{"71}
\title[Norm-variation of triple ergodic averages]{Norm-variation of triple ergodic averages for commuting transformations}
\author[P. Durcik]{Polona Durcik}
\address{Schmid College of Science and Technology, \newline  Chapman University, One University Drive, Orange, CA 92866, USA}
\email{durcik@chapman.edu}
\author[L. Slav\'ikov\'a]{Lenka Slav\'ikov\'a}
\address{Department of Mathematical Analysis, Faculty of Mathematics and Physics, \newline Charles University, Sokolovsk\'a 83, 186 75 Praha 8, Czech Republic}
\email{slavikova@karlin.mff.cuni.cz}
\author[C. Thiele]{Christoph Thiele}
\address{Mathematisches Institut,\newline Universit\"at Bonn, Endenicher Allee 60,  53115 Bonn, Germany}
 \email{thiele@math.uni-bonn.de}
\date{\today}
\begin{document}
\tdplotsetmaincoords{70}{110}

\begin{abstract}
    We prove an $r$-variation estimate, $r>4$, in the norm for ergodic averages with respect to three commuting transformations. It is not known whether
    such estimates hold for all $r\ge 2$ as in the analogous cases for one or two commuting transformations, or whether such estimates hold for any $r<\infty$ for more than three commuting transformations.
\end{abstract} 
 
\maketitle

\section{Introduction}

\label{introsection}

We prove the following norm variation bound for three commuting transformations.
\begin{theorem}\label{thm:ergodicthm}
For all $r>4$, there exists a constant $C>0$ such that the following holds. Let $(X,\mathcal{F},\mu)$ be a $\sigma$-finite measure space,  $T_0,T_1,T_2\colon~X\to X$   mutually commuting measure preserving transformations and let $J$ and $n_0<n_1<\cdots<n_J$ be positive integers. For any   $f_0,f_1\in L^8(X)$ and $f_2 \in L^4(X)$,  each of respective norms one,  we have the bound
\begin{equation*}
\sum_{j=1}^{J} \|M_{n_j}(f_0,f_1,f_2) - M_{n_{j-1}}(f_0,f_1,f_2)\|_{{L}^2(X)}^r \leq C ,
\end{equation*}
where we have defined for almost every $x\in X$ 
\begin{equation*}
M_{n}(f_0,f_1,f_2)(x) := \frac{1}{n}\sum_{i=0}^{n-1} f_0(T_0^i x) f_1(T_1^i x) f_2(T_2^i x).
\end{equation*}
\end{theorem}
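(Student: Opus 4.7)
My plan follows the template developed for the one- and two-transformation cases. First, by a variant of the Calder\'on transference principle applied to the $\Z^3$-action generated by $T_0,T_1,T_2$, the problem reduces to the model setting in which $X=\R^3$ and the $T_k$ are unit shifts along the three coordinate axes. In this model the averages take the concrete form
\begin{equation*}
A_{t}(f_0,f_1,f_2)(x_0,x_1,x_2) = \frac{1}{t}\int_{0}^{t} f_0(x_0-s,x_1,x_2)\,f_1(x_0,x_1-s,x_2)\,f_2(x_0,x_1,x_2-s)\,ds,
\end{equation*}
and the task becomes to bound the $r$-variation of $A_{t}$ in the scale parameter $t>0$, measured in $L^2(\R^3)$.

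With the problem in this concrete form, I would smooth the sharp cutoffs to bumps at each dyadic scale, absorbing the error into a maximal function that is trivially controlled via H\"older's inequality. This reduces matters to the long-variation estimate over dyadic scales,
\begin{equation*}
\sum_{k\in\Z}\bigl\|D_k(f_0,f_1,f_2)\bigr\|_{L^2(\R^3)}^{r} \lesssim \|f_0\|_{L^8}^{r}\|f_1\|_{L^8}^{r}\|f_2\|_{L^4}^{r},
\end{equation*}
where $D_k := A_{2^{k+1}} - A_{2^{k}}$. To prove this I would expand $\|D_k(f_0,f_1,f_2)\|_{L^2}^{2}$ into a six-linear form in copies of $f_0,f_1,f_2$ whose kernel is a difference of two box indicators in a pair of auxiliary scale variables $s,s'\in[0,2^{k+1}]$. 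Three successive Cauchy--Schwarz inequalities in the spatial variables $x_0,x_1,x_2$, carried out in a carefully chosen order, should reduce the form to a bilinear object from which Plancherel-type cancellation can be extracted from the dyadic difference structure of the kernel. The H\"older exponents $8,8,4$ in the statement are precisely those arising when the $L^4$ function $f_2$ is squared once while each of the $L^8$ functions $f_0,f_1$ appears with multiplicity four across the Cauchy--Schwarzings. I expect the outcome to be a scale-independent $\ell^4$-square-function bound
\begin{equation*}
\Bigl(\sum_{k}\|D_k(f_0,f_1,f_2)\|_{L^2(\R^3)}^{4}\Bigr)^{1/4}\lesssim \|f_0\|_{L^8}\|f_1\|_{L^8}\|f_2\|_{L^4},
\end{equation*}
from which the desired $r$-variation estimate for $r>4$ follows upon combining with the trivial $k$-uniform bound $\sup_k\|D_k(f_0,f_1,f_2)\|_{L^2}\lesssim \|f_0\|_{L^8}\|f_1\|_{L^8}\|f_2\|_{L^4}$.

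The principal obstacle is the Cauchy--Schwarz reduction just outlined. The sextilinear form has a triangular entangled structure reminiscent of the simplex Hilbert transform, and a naive symmetric application of Cauchy--Schwarz in the three variables destroys the cancellation carried by the dyadic difference of box averages; one must sequence the three applications, and pair the resulting conjugate copies of the $f_j$'s, in such a way that genuine orthogonality survives at each step. The restriction to $r>4$, rather than the $r\ge 2$ known for one and two commuting transformations, reflects exactly this point: after three Cauchy--Schwarzings only $\ell^4$-type orthogonality appears to be accessible, and reaching $r\ge 2$, or extending the result to four or more commuting transformations, would require a genuinely new cancellation mechanism beyond what is currently available.
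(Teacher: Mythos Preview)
Your reduction of the long variation to the estimate
\[
\sum_{k\in\Z}\|D_k\|_{L^2}^{4}\lesssim 1,\qquad D_k:=A_{2^{k+1}}-A_{2^{k}},
\]
is a genuine gap. An $\ell^4$ bound on \emph{consecutive} dyadic differences does not control the $r$-variation for any $r$, because the variation requires bounds on $\|M_{t_j}-M_{t_{j-1}}\|_{L^2}$ for \emph{arbitrary} jumps $t_{j-1}<t_j$, and such a jump telescopes into $\sum_{k}D_k$ over a block of $k$'s whose $L^2$ norms need not be summable. A bounded sequence can have $\sum_k|D_k|^4<\infty$ and still have infinite $r$-variation for every $r<\infty$ (take $b_k=\sin(\log k)$). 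Combining with the trivial uniform bound $\sup_k\|D_k\|_{L^2}\lesssim 1$ does not help either; that only bounds a single jump, not an unbounded number of them.

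The paper's mechanism is different in exactly this respect. It does \emph{not} sum over all dyadic scales. Instead it proves the jump endpoint
\[
\sum_{j=1}^{J}\|M_{t_j}-M_{t_{j-1}}\|_{L^2}^{2}\lesssim J^{1/2}
\]
for an \emph{arbitrary} finite sequence $t_0<\cdots<t_J$; the range $r>4$ then follows by the standard $\lambda$-jump/layer-cake argument. The factor $J^{1/2}$ is not produced by a square-function estimate over consecutive scales but by a single Cauchy--Schwarz applied inside a singular Brascamp--Lieb form whose two-dimensional multiplier has $J$ pieces of \emph{non-Whitney} (telescoped, arbitrary-eccentricity) type; one factor after Cauchy--Schwarz is bounded trivially by the number $J$ of pieces, the other is bounded loss-free via a second Brascamp--Lieb datum and a positivity/telescoping argument for three-dimensional multipliers. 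Your ``three successive Cauchy--Schwarz in $x_0,x_1,x_2$'' does not describe this structure: after expanding the $L^2$ norm one lands on a form with a \emph{prism} (not cube) datum, and the core of the argument is to manipulate multipliers so that the unavoidable simplex-type loss is paid in $J$ (the number of jumps) rather than in the total number of intermediate dyadic scales. Finally, the short variation is not absorbed by a maximal function; it too is reduced, via the fundamental theorem of calculus and squaring, to a Brascamp--Lieb form with a Whitney-type kernel of $J$ pieces.
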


Norm variation bounds with $r\ge 2$ for one transformation were proven in \cite{MR1389625}
and for two commuting transformations in \cite{MR3904183},
following earlier work \cite{MR3480347} in the finite group setting. 
Norm variation bounds with any $r<\infty$  for any number of commuting transformations were stated as an open problem in the closing section of \cite{MR3345161}. 
Any such bounds remain unknown for more than three
commuting transformations. 
It is natural to conjecture norm variation bounds for
$r\ge 2$ for any number of commuting transformations. The passage from two to three commuting transformations is a critical transition as present techniques very clearly fail to address the sharp variation threshold $r\ge 2$.

Norm variation bounds for any $r$ are strong quantitative forms of norm convergence. 
Qualitative norm convergence for 
three or more commuting transformations was proven by Tao in \cite{MR2408398} by finitary methods.
The case for two commuting transformations had been shown before using the tools from ergodic theory. Ergodic theoretic proofs of Tao's result were given in \cite{MR2599882}, \cite{MR2539560}, and a generalization 
to transformations generating a nilpotent group was proven in \cite{MR2912715}.

Norm convergence should be compared with the more difficult question
of pointwise convergence almost everywhere. Such pointwise
convergence is known by the classical Birkhoff theorem for
a single transformation \cite{bir30}, with pointwise variational estimates proven in \cite{MR1019960}.
Pointwise convergence almost everywhere remains a widely recognized open problem even in the case of two general commuting transformations.
This contrasts with recent developments  in the area concerning multiple ergodic averages with actions 
of polynomial powers $T^{p(n)}$, including a number
of pointwise almost everywhere convergence results 
under the umbrella of the Furstenberg-Bergelson-Leibman conjecture such as the bilinear but
not completely linear polynomial averages in \cite{MR4413747} or
the multi-parameter polynomial averages in \cite{BMSW22}. 
For further history on the ergodic means discussed in the present paper, we refer to the paper on two commuting transformations~\cite{MR3904183}.

By a variant of the well known Calder{\'o}n transference principle,  
 Theorem \ref{thm:ergodicthm} follows from 
Theorem \ref{thm:2} below. We do not elaborate on the transference principle in the present paper but refer to
the case of two commuting transformations  in \cite{MR3904183}. It reduces quantitative convergence results to
analogous results on individual orbits of the action of the group  spanned by the commuting transformations
and parameterized by $\mathbb{Z}^3$.
The further transfer from
$\mathbb{Z}^3$ to $\mathbb{R}^3$ as in Theorem
\ref{thm:2} is harmless and it can be made a part of the transference principle in our setting, unlike in the setting of actions $T^{p(n)}$
with polynomials of higher degree which face number theoretic complications.

\begin{theorem}\label{thm:2}
For all $r>4$, there exists a  constant $C>0$ such that the following holds. For any positive integer $J$ and positive real numbers $t_0<t_1<\cdots < t_J$,  any $f_0,f_1\in L^8(\R^3)$  and $f_2\in {L}^4(\mathbb{R}^3)$ with respective norms one, we have 
\begin{equation}\label{e:main2} \sum_{j=1}^{J} \|M_{t_j}(f_0,f_1,f_2) - M_{t_{j-1}}(f_0,f_1,f_2)\|_{{L}^2(\R^3)}^r \leq C 
\end{equation}
where, with $e_0,e_1,e_2$ the standard unit vectors in $\R^3$, we have defined for almost every $x\in \R^3$:
\begin{equation}\label{eq:averages}
M_{t}(f_0,f_1,f_2)(x) := \frac{1}{t}\int_0^t f_0(x+\tau e_0) f_1(x+\tau e_1) f_2(x+\tau e_2)\,d\tau.
\end{equation}
\end{theorem}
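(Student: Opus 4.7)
The plan is to reduce the full variation bound to a long (dyadic) variation estimate on the increments $D_k := M_{2^{k+1}}(f_0,f_1,f_2) - M_{2^k}(f_0,f_1,f_2)$, and to establish that via a multilinear Cauchy--Schwarz / Littlewood--Paley argument. First I would insert the dyadic scales $2^k$ lying in each $[t_{j-1},t_j]$ to split each jump $M_{t_j}-M_{t_{j-1}}$ into a long part (across dyadic blocks) and a short part (within a single block); by the standard decomposition $V^r\lesssim V^r_{\mathrm{long}}+V^r_{\mathrm{short}}$ and a square function estimate on $\partial_t M_t$ handling the short part, the problem reduces to the long variation bound $\sum_k\|D_k\|_{L^2}^r\leq C\|f_0\|_{L^8}^r\|f_1\|_{L^8}^r\|f_2\|_{L^4}^r$. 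Since $\ell^p$-norms on sequences are non-increasing in $p$ and $r>4$, it suffices to prove the $\ell^4$ estimate
\begin{equation*}
\sum_k \|D_k\|_{L^2}^4 \leq C \|f_0\|_{L^8}^4 \|f_1\|_{L^8}^4 \|f_2\|_{L^4}^4.
\end{equation*}

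For this $\ell^4$ estimate I would expand
\begin{equation*}
\|D_k\|_{L^2}^2 = \int_{\R^3}\iint \psi_k(\tau)\psi_k(\sigma)\prod_{i=0}^2 f_i(x+\tau e_i)\overline{f_i(x+\sigma e_i)}\,d\tau\,d\sigma\,dx,
\end{equation*}
where $\psi_k$ is a mean-zero bump at scale $2^k$ with $\|\psi_k\|_\infty\lesssim 2^{-k}$. Squaring then yields $\|D_k\|_{L^2}^4$ as a positive $12$-linear expression in the pairs $(f_i,\overline{f_i})$. Cauchy--Schwarz applied in the directions $e_0$ and $e_1$ would duplicate $(f_0,\overline{f_0})$ and $(f_1,\overline{f_1})$ while leaving $(f_2,\overline{f_2})$ intact, converting the estimate into a symmetric simplex/box form. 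A Littlewood--Paley decomposition in the one-dimensional frequency $L=\xi_0^{(0)}+\xi_1^{(1)}+\xi_2^{(2)}$ to which the symbol of $D_k$ is localized would then handle the summation over $k$ orthogonally, and the resulting box form is dominated by $\|f_2\|_{L^4}^4\|f_0\|_{L^8}^4\|f_1\|_{L^8}^4$ via Loomis--Whitney--type inequalities.

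I expect the main obstacle to be the interaction between the summation in $k$ and the Cauchy--Schwarz steps. In the two-commuting-transformations setting of \cite{MR3904183}, a single Cauchy--Schwarz preserves enough orthogonality between scales to yield $\ell^2$-summability, and hence the optimal variation range $r\geq 2$. In our three-directional setting, the simplex structure demands more duplications, each costing a factor of two in the best summability exponent, so that only $\ell^4$-summability is accessible. Making this loss precise, while simultaneously consuming the asymmetric $L^8\times L^8\times L^4$ integrability via the corresponding box form (with $f_2$ appearing uncopied and $f_0,f_1$ each duplicated once), is the delicate point of the argument, and is also the reason the proof falls short of the conjecturally sharp range $r\geq 2$.
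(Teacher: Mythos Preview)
Your proposal has a genuine gap at the very step you flag as ``delicate.'' You reduce to the uniform $\ell^4$ estimate
\[
\sum_{k\in\Z}\|D_k\|_{L^2}^4 \;\le\; C,
\]
which, if true, would yield the variation bound at the endpoint $r=4$, not merely $r>4$. The paper does not prove this; it explicitly settles for the strictly weaker jump estimate
\[
\sum_{j=1}^{J}\|M_{t_j}-M_{t_{j-1}}\|_{L^2}^2 \;\lesssim\; J^{1/2}
\]
for an \emph{arbitrary finite} choice of $J$ scales, and then passes to $r>4$ by Chebyshev and a layer-cake argument. Your assertion that after Cauchy--Schwarz a Littlewood--Paley decomposition ``handles the summation over $k$ orthogonally'' is precisely the point that fails: once you duplicate $f_0$ and $f_1$, the resulting multilinear form has a non-cubical (triangular-prism) Brascamp--Lieb datum, and the pieces at different scales $k$ are \emph{not} orthogonal in any usable sense. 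Extracting even a $J^{1/2}$ gain over the trivial $J$ bound is the entire content of the paper's Propositions~2.1--2.9, which require a hierarchy of further Cauchy--Schwarz steps, positivity/domination arguments, and multi-parameter telescoping identities for three-dimensional symbols. A Loomis--Whitney bound on the box form controls a single scale but gives no mechanism to sum.

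A secondary gap is the short variation. The square-function bound for $\partial_t M_t$ that you invoke is itself a singular Brascamp--Lieb estimate of the same character as the long variation (it is Corollary~2.2 in the paper, a special case of Proposition~2.1), not a routine Littlewood--Paley fact; it too carries a $J^{1/2}$ loss rather than being summable over all dyadic scales.
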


Only the choice of tuple of exponents $(8,8,4)$ breaks the symmetry between the three functions in the above theorems. One therefore concludes the analogous estimates for permutations of these exponents. Interpolation gives further
tuples of exponents, for example the symmetric tuple $(6,6,6)$.

Theorem \ref{thm:2} is proven 
using
the theory of singular Brascamp-Lieb forms. 
A singular Brascamp-Lieb datum $D=(n,S,\Pi,(\Pi_s)_{s\in S})$ is a tuple containing the dimension $n\ge 1$ of the domain of integration, the finite set $S$ parameterizing the tuple of input functions,
and linear maps $\Pi$ and $\Pi_s$ for $s\in S$ on the domain $\R^{n}$,
where $\Pi_s$ maps onto the domain of the input function with parameter $s$, typically of smaller dimension than $n$. Together with some singular integral kernel $K$
on the range of $\Pi$, the singular Brascamp-Lieb 
form $\Lambda_{D,K}$ is defined as
\[\Lambda_{D,K}((f_s)_{s\in S})=\int_{\R^n} K(\Pi x)\prod_{s\in S} f_s(\Pi_sx)\, dx ,\]
where the integral is defined in some principal value sense or, if the kernel has additional qualitative regularity as is mostly the case in the present paper, in the Lebesgue integral sense. We also often talk about the multiplier $m$ of the form,
which is the Fourier transform of the kernel $K$. A singular Brascamp-Lieb inequality estimates this form by a constant times
the product of Lebesgue norms $\prod_{s\in S} \|f_s\|_{p_s}$ for some tuple of
exponents $p_s$.

Singular Brascamp-Lieb inequalities with the kind of data appearing in this paper are
studied in \cite{MR4041095}, \cite{MR4510172},  \cite{MR4390229}, and \cite{MR4478297}  when $K$ is a classical Calder\'on-Zygmund kernel. Compared with this work, the novelty in the present paper is that the kernels $K$ do not satisfy uniform Calder\'on-Zygmund bounds but rather multi-parameter symbol estimates arising naturally in the investigation of variation norms.
These symbol estimates no longer synchronize with a Whitney decomposition of frequency space but rather involve regions determined by
an arbitrary sequence of jumps between scales, 
  such as the red regions with arbitrary eccentricity in Figure
\ref{fig:1}.
\begin{figure}[htb]
\centering
\includegraphics{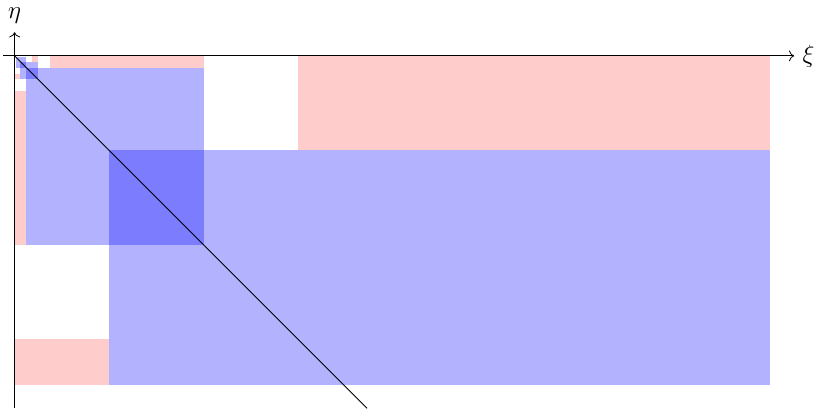}
\caption{Structure of $m=\widehat{K}$ in  Theorem~\ref{thm:2}.} \label{fig:1}
\end{figure}

Multi-parameter singular Brascamp-Lieb forms of this type appear in more basic form already in the case of two commuting transformations  \cite{MR3904183}. 
Compared to two transformations, novel challenges for three commuting transformations arise from the absence of the cubical structure of the main singular Brascamp-Lieb form relevant to  Theorem~\ref{thm:2}. For two commuting transformations the set $S$ of the Brascamp-Lieb datum can be naturally identified with the corners of a square, but for three commuting transformations it can not be identified with corners of a cube, but rather with vertices of a triangular prism.  Cubical structure is important to allow a loss-free symmetrization of the form along the reflection symmetries of the cube. Lacking
such cubical structure,  the techniques available to us lead to an unavoidable loss analogous to the work on cancellation for the simplex Hilbert transform \cite{MR4041095}
and also \cite{MR4409885}, \cite{DS22}.
One novelty in the present paper is that this loss needs to be absorbed by a relaxation of the variation norm parameter $r$ towards $r>4$.
In other words, we cannot allow a loss in difference between largest and smallest scale involved, i.e., in the total
number of intermediate scales involved, but only a loss in the much smaller number of jumps between the scales. Thus we need to develop an analysis that carefully uses and preserves the
particular structure of multipliers as depicted in Figure 
\ref{fig:1} throughout the argument.

We next provide an overview over the arguments of the present paper,
which is structured into intermediate propositions and sections as in Figure
\ref{fig:tree}.

\tikzstyle{line} = [draw, thick,-latex']

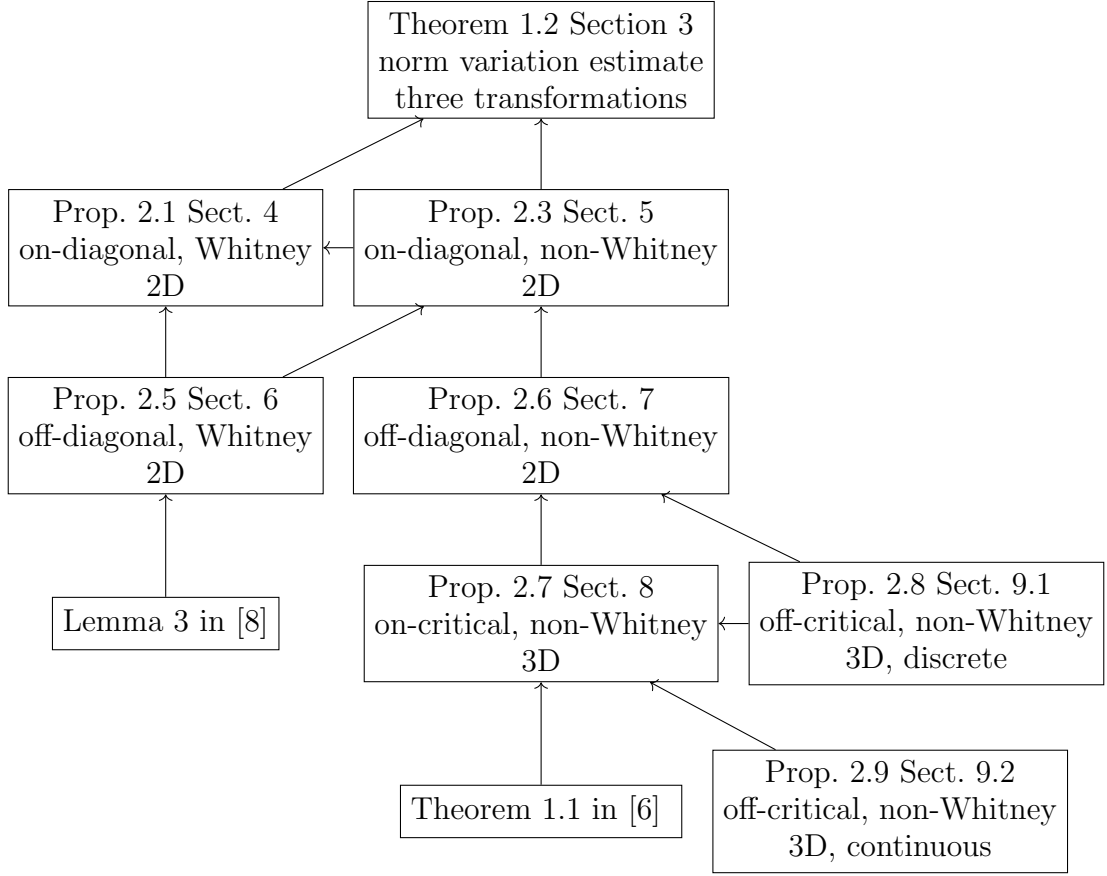
\begin{figure}[htb]
    \centering
\begin{tikzpicture}[scale=1, node distance={2.5cm},   every node/.style={draw, rectangle,  align=center},every text node part/.style={align=center}]

\node (12) {Theorem \ref{thm:2} Section \ref{proofthm:2}\\ norm variation estimate 
\\
three transformations};
\node  (23) [below of = 12] {Prop.~\ref{prop:2} Sect.~\ref{sec:prop:2}\\ on-diagonal, non-Whitney\\ 2D};
\node  (21) [left  = 0.4cm of 23] {Prop.~\ref{prop:1} Sect.~\ref{sec:prop:1}\\on-diagonal, Whitney\\ 2D};
\node  (25) [below    of= 21] {Prop.~\ref{firstcz} Sect.~\ref{sec:firstcz}\\ off-diagonal, Whitney\\ 2D};
\node  (26) [below   of = 23] {Prop.~\ref{firststick} Sect.~\ref{sec:firststick}\\ off-diagonal, non-Whitney\\ 2D};
\node  (L3) [below   of = 25] {Lemma 3  in~\cite{MR4171366}};
\node  (27) [below  of = 26] {Prop.~\ref{thm:4} Sect.~\ref{sec:thm:4}\\ on-critical, non-Whitney\\ 3D};
\node  (T11) [below  of = 27] {Theorem 1.1 in~\cite{MR4510172} };
\node  (28) [right = 0.4cm of  27] {Prop.~\ref{secstickphi} Sect.~\ref{sec:9.1}\\  off-critical, non-Whitney \\  3D, discrete};
\node  (29) [right = 0.4cm of T11] {Prop.~\ref{secstickgauss} Sect.~\ref{sec:9.2}\\   off-critical, non-Whitney \\  3D, continuous};

\draw[->] (21) -- (12);
\draw[->] (23) -- (12);
\draw[->] (23) -- (21);
\draw[->] (25) -- (23);
\draw[->] (26) -- (23);
\draw[->] (L3) -- (25);
\draw[->] (T11) -- (27);
\draw[->] (27) -- (26);
\draw[->] (28) -- (26);
\draw[->] (28) -- (27);
\draw[->] (29) -- (27);
\draw[->] (25) -- (21);
\end{tikzpicture}
\caption{Structure of the proof of the main theorem}
\label{fig:tree}
\end{figure}

Theorem~\ref{thm:2} 
is deduced  in Section~\ref{proofthm:2} from~\eqref{eq:doublenormvar}, an  estimate in terms of a fixed number $J$ of jumps in the variation,  which can be thought of as an endpoint estimate
at $r=4$ for~\eqref{e:main2}. This endpoint estimate is reduced to two singular Brascamp-Lieb estimates, both  with datum $D_1$ defined in \eqref{defd1}, but with different two-dimensional kernels illustrated in Figure \ref{fig:1}.
For simplicity we focus on one quadrant in our
discussion, as the other quadrants do not pose additional difficulties.
The first singular Brascamp-Lieb estimate, Proposition \ref{prop:1}, takes care of the so-called short variation with a multiplier that lives near the dark
blue overlap regions of the light blue squares in Figure \ref{fig:1}.
The size of each dark blue square  is comparable to its distance to the origin, a property we call 
Whitney. Proposition \ref{prop:2} takes care of the so-called long variation with multipliers living
at the light blue squares themselves. Each light blue square includes potentially many scales and therefore is not in general Whitney. However,
the piece of the multiplier associated with each light blue square has elementary tensor structure and telescopes into 
the difference between its largest and its smallest scale.
For both of these propositions, it is important that the number of squares involved 
is controlled by $J$.

\begin{figure}[htb]
    \centering
 \includegraphics{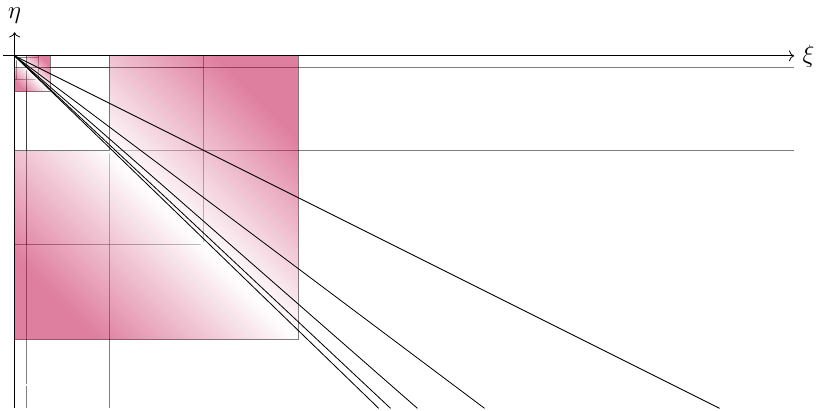}
    \caption{Lacunary cones}
    \label{fig:cones}
\end{figure}

Proposition \ref{prop:2}
is proved in Section \ref{sec:prop:2}. Multipliers vanishing on
the diagonal in Figure \ref{fig:1} play a role as auxiliary objects. We use a positivity of  multipliers
symmetric across the diagonal
to pass to a similar multiplier $m_1$ associated with light blue squares 
but constant on the diagonal.
We then define two further multipliers
$m_2$ and $m_3$ so that $m_1+m_2+m_3$ is constant in the entire plane. This constant multiplier allows
a trivial bound, reducing the estimate for $m_1$ to estimates for $m_2$ and $m_3$.
Multiplier $m_2$ is addressed in Proposition \ref{firststick}.
It is supported near the 
red sticks in Figure \ref{fig:1}. Each stick
is away from the diagonal and has possibly many scales and is therefore not in general Whitney.
However, the multiplier associated with each stick is an elementary tensor and as such telescopes into a small number of scales.
The multiplier $m_1+m_2$ is constant
both on the diagonal as well as on the white L-shaped regions in Figure \ref{fig:cones}.
The multiplier $m_3$ 
is addressed in Proposition 
\ref{firstcz}. 
It is supported
in the at most $J$ purple regions in between
the white L- shaped regions and vanishes on the diagonal. 
Each purple region has a single scale and is Whitney. 
We decompose $m_3$ into a lacunary family of cones towards the diagonal shown in Figure \ref{fig:cones}.
Each lacunary piece is estimated
with Lemma 3 in \cite{MR4171366}. Thanks to vanishing on the diagonal, one has
a geometric sum for the estimates in this family.

Proposition \ref{prop:1} is proved in Section \ref{sec:prop:1}.
We combine the dark blue squares with a  suitable
family of light blue squares with tensor structure
to obtain a multiplier vanishing on the diagonal.
The light blue squares are estimated with Proposition \ref{prop:2}, while the multiplier vanishing on the diagonal is estimated with
Proposition \ref{firstcz}.

Proposition \ref{firststick} is proved in
Section \ref{sec:firststick}. Using the 
off-diagonal property of the multiplier to
preserve crucial cancellation in the innermost integral, we apply the Cauchy-Schwarz inequality 
in the remaining integrals.
We estimate one of the factors on the right-hand side of Cauchy-Schwarz using that the multiplier 
has $J$ summands, which leads to the loss of $J^{\frac 12}$. The other factor we estimate loss-free thanks to the above mentioned cancellation. This loss-free estimate takes the form of a singular Brascamp-Lieb form with datum $D_2$ defined in
\eqref{defd2}. The multiplier $m$ is now three-dimensional,  but consisting of pieces that are naturally of
the form
\begin{equation}\label{2d3d}
    \phi_1(\theta \cdot v_1) \phi_2(\theta \cdot v_2)
\end{equation}
with two vectors $v_1=(0,0,1)$ and $v_2=(1,-1,0)$ as shown in Figure
\ref{3dfigure}. 
\begin{figure}
    \centering
\includegraphics{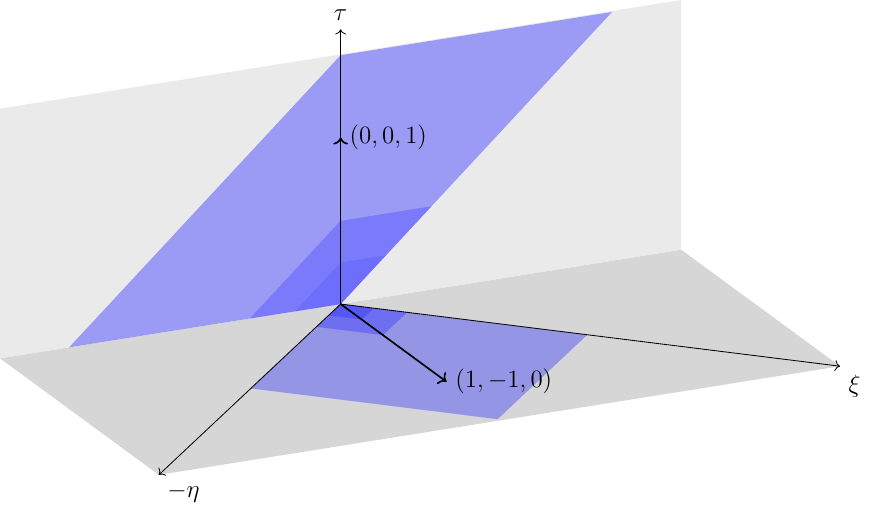}
    \caption{Three-dimensional multiplier}
 \label{3dfigure}
\end{figure}
Typical behaviour of the functions
$\phi_1$ and $\phi_2$ is shown 
in  Figure \ref{3dfigure}
on the planes perpendicular to $v_1$ and $v_2$.
An important role is played by multipliers vanishing on the
critical space spanned by $v_1$ and $v_2$. Such multipliers
are estimated in 
Propositions \ref{secstickphi} and \ref{secstickgauss}
in the non-Whitney case and by Theorem 1.1 in~\cite{MR4510172}
in the Whitney case.
The multiplier $m$ does not vanish on the critical space.
It is first modified using Proposition \ref{secstickphi} towards a  multiplier $m'$ that also 
consists of pieces as in \eqref{2d3d} but is more symmetric
in the variables $\xi,\eta$. The multiplier $m'$ is then estimated by Proposition \ref{thm:4}.

Proposition \ref{thm:4} is proved in Section \ref{sec:thm:4}.
The key to estimating non-Whitney multipliers 
such as $m'$ 
is a new variant of a telescoping identity in this context
that concerns three-dimensional multipliers with a two
dimensional flavor as in Figure \ref{3dfigure} and \eqref{2d3d}. This identity telescopes
a trivial multiplier, the product of the lightest blue  squares minus the product of the darkest
blue squares, into two sums consisting of products of
a square in one plane with a difference of the corresponding square with a consecutive square in the other plane. One sum is arranged to allow a positivity argument
using reflection symmetry across the
diagonal in the $\xi,\eta$ plane, the other sum is arranged similarly to allow a positivity argument with symmetry in 
the shown skew coordinates in the other plane. As the two positive sums add to a trivial multiplier, both are individually bounded. The given multiplier $m'$ can be dominated by one of these constructed multipliers,
using various modifications with Propositions 
 \ref{secstickphi} and \ref{secstickgauss}
and Theorem 1.1 in~\cite{MR4510172}.

 Propositions 
 \ref{secstickphi} and \ref{secstickgauss} are proved in
 Section \ref{sec:secstick}. Vanishing of the multiplier
 on the critical space
 allows a lacunary decomposition away from the critical space 
 and a further Cauchy-Schwarz. This leads to
 singular Brascamp-Lieb estimates with a standard 
 cubical datum and three-fold telescoping identities for
 three-dimensional kernels. The non-Whitney property requires
 telescoping along the scales of the variation sequences.
 There is a mix of discrete telescoping and partial integration, with \ref{secstickphi} more discrete and
 \ref{secstickgauss} more continuous. Analogous but simpler techniques appear in  the Whitney case in Theorem 1.1 in~\cite{MR4510172}.

We have kept the sections past Sections 
\ref{introsection} and \ref{sblsection} independent
of each other, each proves the theorem or one or two propositions and uses some of the other propositions or cited theorems as black boxes.
 
While it is plausible that our approach can be upgraded to an iterative scheme that handles more than three commuting transformations, we decided to complete and circulate the argument in the case of three transformations.
This case has a single 
transition step with the important  new techniques and 
does not appear to involve all
the complications that one expects for the general case.
 \\

 \textit{Acknowledgment.} 
The authors thank Terence Tao for an exchange on the introduction of an earlier version of the paper.
 P.~D.  was partially supported by the NSF grant DMS-2154356. L.~S. was supported by the Primus research programme PRIMUS/21/SCI/002 of Charles University and by Charles University Research program
UNCE/SCI/023. 
C.~T. was funded by the Deutsche Forschungsgemeinschaft (DFG, German Research Foundation) under Germany's Excellence Strategy -- EXC-2047/1 -- 390685813 as well as SFB 1060.
P.~D. and C.~T. were also supported by the NSF grant DMS-1929284 while the authors were in residence at the Institute for Computational and Experimental Research in Mathematics in Providence, RI, during the inspiring Harmonic Analysis and Convexity program.   The authors thank the anonymous referees for their suggestions.

\section{A collection of propositions on singular Brascamp-Lieb forms}
\label{sblsection}
This section contains a number of propositions stating cancellation estimates for 
singular Brascamp-Lieb forms for some data and some 
class of kernels and  with symmetric tuples of test functions. 

The first four propositions and two corollaries share a common datum $D_1$, which, 
after suitable change of variables, arises directly out of the original problem in Theorem~\ref{thm:2}.
Put coordinates \[x=(x_0,x_1,x_2,x_3^0,x_3^1)\] on $\R^5$.
Define 
\begin{equation}\label{defd1}
D_1:=(5,S, \Pi,(\Pi_s)_{s\in S})    
\end{equation}
 with 
$S=\{0,1,2\}\times \{0,1\}$, with $\Pi$ mapping $\R^5$ to $\R^2$ 
as
\[\Pi(x)=(x_3^0-x_0-x_1-x_2, x_3^1-x_0-x_1-x_2),\]
and with $\Pi_s$ for $s=(k,j)$ mapping $\R^5$ to $\R^3$ as
\[\Pi_{(k,j)}(x)=(x_0,x_1,x_2)-x_ke_k+ x_3^j e_k.\]

Each of the three following propositions will have a constant $C$, a 
parameter $J$ and formulate a class of kernels $K$ such that the 
singular Brascamp-Lieb estimate 
\begin{equation}\label{lambdabd}
|\Lambda_{D_1,K} ((f_s)_{s\in S})|\le CJ^{\frac 12}
\end{equation}
holds for all tuples of real-valued Schwartz functions $(f_{s})_{s\in S}$
such that
\begin{equation}\label{topbottom}
f_{(k,0)}=f_{(k,1)}
\end{equation}
for each $k\in \{0,1,2\}$
and 
\begin{equation}\label{normal1}
\|f_{(0,j)}\|_{8}=\|f_{(1,j)}\|_{8} =\|f_{(2,j)}\|_{4} =1
\end{equation}
for each $j\in \{0,1\}$.  We point out that the symmetry assumption~\eqref{topbottom} arises naturally when reducing Theorem~\ref{thm:2} to the propositions stated below. While our arguments could be modified in order to prove these propositions without the extra assumption~\eqref{topbottom}, we decided not to pursue this line of generalization.

Define for any function $\phi$ on $\R^d$ the $L^1$ normalized scaling
\begin{equation*}
\phi_{(t)}(x)=t^{-d} \phi(t^{-1}x).
\end{equation*}
Define the Fourier transform $\widehat{\phi}$ of  $\phi$ 
by integration against the kernel $(x,\xi) \mapsto e^{-2\pi i x \cdot \xi}$.

The kernels in the next proposition satisfy standard two-dimensional symbol estimates with bounds depending on the parameter $k$.
They consist of pieces satisfying a positivity assumption.
Such positivity assumption is used in the proof by adding further
positive terms so as to achieve better behaviour on some frequency diagonal.
The complexity of these kernels is bounded by $J$.

\begin{proposition}[on-diagonal, Whitney, 2D][Proved in Section \ref{sec:prop:1}]\label{prop:1}
Let $\lambda=\frac{3}{2}$.
There exists a constant $C>0$ such that the following holds for all $k \leq 0$. 
Let $J$ be a positive integer and   $(k_j)_{j=1}^J$ 
  a finite strictly monotone increasing sequence of integers.
Let
\[K =\sum_{j=1}^J \Phi_j,\]
where for each $1\le j\le J$ we assume 
$\Phi_j$ is a real valued  function on $\R^2$,
with symmetry $$\Phi_j(u,v)=\Phi_j(v,u)$$ and positivity
in the sense
\begin{equation}\label{poskernel}
\int_{\R^2} f(u)\overline{f(v)}\Phi_j(u,v)\,dudv\ge 0
\end{equation}
for all complex-valued $f$.
We assume further
\begin{equation*}
{\rm supp} (\widehat{\Phi_j})\subset 
([-2^{-k_j+20},-2^{-k_j-20}]\cup [2^{-k_j-20},2^{-k_j+20}])^2
\end{equation*}  
and, for all $(u,v)\in \R^2$,
\begin{equation}\label{E:phi-lambda}
 |(\Phi_j)_{(2^{-k_j})}(u, v)| \leq  2^{\lambda k}(1+2^k|u+v|)^{-10} (1+|u-v|)^{-10}.
\end{equation}
Then estimate \eqref{lambdabd} holds for any tuple as in \eqref{topbottom}, \eqref{normal1}.
\end{proposition}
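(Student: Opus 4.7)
The plan is to write $K = L + (K - L)$ where $L = \sum_{j=1}^{J} L_j$ is an auxiliary family of positive rank-one tensor (light blue) multipliers at the same Whitney scales $2^{-k_j}$ as the $\Phi_j$, engineered so that the Fourier symbol of $K - L$ vanishes on the relevant frequency diagonal. With this decomposition in hand, the kernel $L$ fits the on-diagonal non-Whitney tensor setting of Proposition~\ref{prop:2}, the kernel $K - L$ fits the off-diagonal Whitney setting of Proposition~\ref{firstcz}, and the triangle inequality will then yield the bound \eqref{lambdabd} for $K$.

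To construct the companion tensors, I would first extract sign information from the positivity assumption~\eqref{poskernel}. Transferring \eqref{poskernel} to the frequency side via Plancherel, together with the symmetry of $\Phi_j$, shows that the restriction of $\widehat{\Phi_j}$ to the relevant diagonal is a non-negative function, supported in the scale-$2^{-k_j}$ annulus prescribed by the Fourier support hypothesis and satisfying a pointwise bound inherited from~\eqref{E:phi-lambda}. Taking a smooth square root $\alpha_j$ of this diagonal trace (regularising any zeros by a standard device), I would set $\widehat{L_j}(\xi, \eta) := \alpha_j(\xi)\, \alpha_j(\eta)$, so that $L_j(u, v) = A_j(u) A_j(v)$ for $A_j := \mathcal{F}^{-1}\alpha_j$. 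By construction $L_j$ is symmetric, satisfies \eqref{poskernel} as a rank-one positive kernel, carries the same Whitney Fourier support as $\Phi_j$, obeys an analogue of~\eqref{E:phi-lambda} up to an absolute constant, and matches $\widehat{\Phi_j}$ on the diagonal, so that $\widehat{K} - \widehat{L}$ vanishes there.

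With $L$ built, the remaining two steps are direct invocations of the black boxes. I would apply Proposition~\ref{prop:2} to the tensor family $(L_j)_{j=1}^J$ indexed by the strictly increasing scales $(k_j)_{j=1}^J$, obtaining $|\Lambda_{D_1, L}| \leq C J^{1/2}$. I would then apply Proposition~\ref{firstcz} to $K - L = \sum_{j=1}^J (\Phi_j - L_j)$, whose summands are Whitney-localised at scale $2^{-k_j}$, obey the pointwise bound \eqref{E:phi-lambda} up to an absolute constant, and vanish on the diagonal, obtaining $|\Lambda_{D_1, K - L}| \leq C J^{1/2}$. The triangle inequality then completes the proof.

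I expect the main obstacle to be the construction step. Producing a smooth square root $\alpha_j$ that simultaneously preserves the Whitney Fourier support and enjoys decay uniform in $j$ and in the arbitrary sequence $(k_j)$ requires care, particularly because the size estimate \eqref{E:phi-lambda} may degenerate and the square root operation does not interact cleanly with frequency localisation. A secondary issue will be verifying that $\Phi_j - L_j$ fits exactly into the off-diagonal Whitney class demanded by Proposition~\ref{firstcz}; if the classes do not line up on the nose, an additional smoothing or cutoff, possibly at the cost of a further decomposition of $K - L$ of bounded complexity independent of $J$, will be required, but such a step should not affect the overall $J^{1/2}$ scaling of the final bound.
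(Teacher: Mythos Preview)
Your overall strategy---split $K$ into a rank-one tensor part handled by Proposition~\ref{prop:2} and a diagonal-vanishing remainder handled by Proposition~\ref{firstcz}---is exactly the paper's. The gap is in the construction of $L_j$. Taking $\alpha_j$ to be a square root of the diagonal trace $\xi\mapsto\widehat{\Phi_j}(\xi,-\xi)$ runs into two obstacles. First, this trace is only known to be nonnegative, and near its zeros there is no smooth square root with uniform derivative bounds; your ``standard device'' cannot simultaneously regularise the zeros and preserve the exact matching on the diagonal that you need for $\widehat{K-L}$ to vanish there. Second, even if $\alpha_j$ were smooth, Proposition~\ref{prop:2} has rigid hypotheses: its tensors must be of the form $(\phi_{0,j}-\phi_{1,j})^{\otimes 2}$ for specific rescaled left and right \emph{windows}, not arbitrary annular bumps, so your $L$ would not fit.

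The paper's fix resolves both issues at once by exploiting positivity of the \emph{form}, not just of the trace. Since $\Lambda_{D_1,K}\ge 0$, it suffices to bound the larger quantity $\Lambda_{D_1,K_0}$ with $K_0=K+\sum_j\psi_j\otimes\psi_j$ for any real even $\psi_j$. One first fixes window differences $\phi_{0,j}-\phi_{1,j}$ so that $2c(\widehat{\phi_{0,j}}-\widehat{\phi_{1,j}})^2$ dominates the diagonal trace pointwise, and then defines $\widehat{\psi_j}^2$ to be the \emph{difference} $2c(\widehat{\phi_{0,j}}-\widehat{\phi_{1,j}})^2-\widehat{\Phi_j}(\cdot,-\cdot)$. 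This difference is bounded below by $c$ on a neighbourhood of the support of the trace, so its square root is smooth with uniform derivative bounds. By construction $K_0$ now matches the window tensor $K_1=2c\sum_j(\phi_{0,j}-\phi_{1,j})^{\otimes2}$ on the diagonal; hence $K_0-K_1$ goes to Proposition~\ref{firstcz}, while $K_1$ fits Proposition~\ref{prop:2} on the nose (after the preliminary thinning $k_{j+1}\ge k_j+100$).
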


We note that the particular value $\lambda=\frac{3}{2}$ is not essential for the proof of Proposition~\ref{prop:1}. Evidently, the analogous statement of the proposition becomes stronger for smaller values of $\lambda$. Our proof can be pushed 
to $\lambda>1$ at the expense of allowing the constant in~\eqref{lambdabd} to depend on $\lambda$. On the other hand, the upper bound $\lambda<2$ is needed to apply Proposition~\ref{prop:1} to prove Theorem~\ref{thm:2}.
There are also constants $10$ and $20$ chosen in this proposition which need to be large enough but
also need to relate to similar other constants in other propositions to follow.

If, in the above proposition, each $\Phi_j$ is an elementary tensor of a
suitable function $\phi_j$ with itself, then symmetry and positivity
are automatic, 
and $k$ is naturally chosen as $0$. 
We formulate this as an immediate corollary.

\begin{corollary}
\label{cor:1}
There exists a constant  $C>0$ such that the following holds. 
Let $J$ be a positive integer and   $(k_j)_{j=1}^J$ 
 a finite strictly monotone increasing sequence of integers. Let
\[K =\sum_{j=1}^J \phi_j\otimes \phi_j,\]
where for each $1\le j\le J$ we assume 
$\phi_j$ is a real-valued function on $\R$ with
\begin{equation*}
{\rm supp} (\widehat{\phi_j})\subset 
[-2^{-k_j+20},-2^{-k_j-20}]\cup [2^{-k_j-20},2^{-k_j+20}]
\end{equation*}  
and for all $u\in \R$,
\begin{equation*}
|(\phi_j)_{(2^{-k_j})}(u)| \leq  (1+|u|)^{-20}.
\end{equation*}

Then estimate \eqref{lambdabd} holds for any tuple as in \eqref{topbottom}, \eqref{normal1}.
\end{corollary}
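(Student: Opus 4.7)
The plan is to apply Proposition \ref{prop:1} directly with the choice $\Phi_j := \phi_j \otimes \phi_j$ and parameter $k := 0$. The task then reduces entirely to checking that the four hypotheses on $(\Phi_j)_{j=1}^J$ are inherited from those on $(\phi_j)_{j=1}^J$.

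First I would check symmetry and positivity. Symmetry $\Phi_j(u,v) = \phi_j(u)\phi_j(v) = \Phi_j(v,u)$ is immediate from the tensor form. For the positivity condition \eqref{poskernel}, since $\phi_j$ is real-valued,
\begin{equation*}
\int_{\R^2} f(u)\overline{f(v)}\, \phi_j(u)\phi_j(v)\,du\,dv \;=\; \left|\int_{\R} f(u)\phi_j(u)\,du\right|^2 \;\ge\; 0,
\end{equation*}
where realness of $\phi_j$ allows moving the conjugate onto $\phi_j$ in the $v$-integral.

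Next I would verify the support and decay bounds. Since $\widehat{\Phi_j} = \widehat{\phi_j}\otimes \widehat{\phi_j}$, the support hypothesis on $\widehat{\Phi_j}$ follows immediately from the one-dimensional support assumption on $\widehat{\phi_j}$. For the pointwise bound \eqref{E:phi-lambda} with $k=0$, the tensor structure gives $(\Phi_j)_{(2^{-k_j})} = (\phi_j)_{(2^{-k_j})}\otimes (\phi_j)_{(2^{-k_j})}$, so the assumed one-dimensional decay yields
\begin{equation*}
\left|(\Phi_j)_{(2^{-k_j})}(u,v)\right| \;\le\; (1+|u|)^{-20}(1+|v|)^{-20}.
\end{equation*}
Using $\max(|u+v|,|u-v|)\le |u|+|v|$ together with $1+|u|+|v| \le (1+|u|)(1+|v|)$, one obtains
\begin{equation*}
(1+|u+v|)^{10}(1+|u-v|)^{10} \;\le\; (1+|u|+|v|)^{20} \;\le\; (1+|u|)^{20}(1+|v|)^{20},
\end{equation*}
which is precisely what is needed since $2^{\lambda\cdot 0}=1$.

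With all four hypotheses of Proposition \ref{prop:1} verified, the conclusion \eqref{lambdabd} follows at once. There is no genuine obstacle here; as the text preceding the statement already indicates, the role of the corollary is exactly to record that in the symmetric-tensor case the extra scaling factor $2^{\lambda k}$ with $k<0$ is not needed, because the weight $(1+|u|)^{-20}(1+|v|)^{-20}$ coming from the tensor product already dominates the diagonal weight $(1+|u+v|)^{-10}(1+|u-v|)^{-10}$ without any loss.
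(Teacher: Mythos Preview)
Your proposal is correct and matches the paper's intended approach exactly: the paper states this as an immediate corollary of Proposition~\ref{prop:1} with $k=0$, noting that symmetry and positivity are automatic for elementary tensors $\phi_j\otimes\phi_j$. Your verification of the four hypotheses is complete and accurate, including the clean elementary inequality passing from the product decay $(1+|u|)^{-20}(1+|v|)^{-20}$ to the diagonal weight $(1+|u+v|)^{-10}(1+|u-v|)^{-10}$.
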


We need the following technical notion of pairs in the next proposition. Let $N=2^{18}$. This large number
 is necessitated by a somewhat
inefficient referral in the proof of Proposition \ref{thm:4} to a theorem in \cite{MR4510172}.
A more hands-on approach should be able to
make this number much more moderate, but this is certainly not important for our argument.
A  {\em $c$-pair} is a pair $(\phi_0,\phi_1)$ of two real valued integrable even functions satisfying the following assumptions. Their Fourier transforms $\widehat{\phi_0},\widehat{\phi_1}$ map
$\R$ to $[0,1]$,
are supported on $[-1,1]$ and constant $1$ on
$[-2^{-1}, 2^{-1}]$, they satisfy 
\begin{equation}\label{windowsum}
(\widehat{\phi_0})^2+(1-\widehat{\phi_1})^2=1,
\end{equation}
 and
\begin{equation*}
\|\widehat{\phi_0}^{(N+30)}\|_\infty , \|\widehat{\phi_1}^{(N+30)}\|_\infty\le c  .   
\end{equation*}
Here and in what follows, $\varphi^{(k)}$ stands for the $k$-th derivative of $\varphi$.
Lemma \ref{cnwindow} below shows that there is $c$ such that
a $c$-pair exists. When $c$ is at most one million
times the infimum of all positive numbers $c'$ such that a
$c'$-pair exists, then $(\phi_0,\phi_1)$ is called a 
universal pair.
A left window is a function $\phi$ such that there exists a function
$\psi$ such that $(\phi,\psi)$ is a universal pair. A right window
is a function $\phi$ such that there exists a function $\rho$
such that $(\rho,\phi)$ is a universal pair.
Note that functions $\phi$ that are both left and right window may exist, but a notion of two sided windows needs caution
as the corresponding functions $\psi$ and $\rho$ may not satisfy 
this notion.

The kernels of the next proposition do not satisfy two-dimensional
symbol estimates, at least not uniformly in the choices of sequences
$k_j$ and $l_j$. They still consist of pieces with a positivity assumption and elementary tensor structure with only two different scales in it and have complexity controlled by $J$.
\begin{proposition}[on-diagonal, non-Whitney, 2D] [Proved in Section \ref{sec:prop:2}]\label{prop:2}
There exists $C>0$ such that the following holds. 
Let $J$ be a positive integer and   $(k_j)_{j=1}^J$ and $(l_j)_{j=1}^J$   two finite sequences of integers  that are interlaced in the sense that $k_{j}+10< l_j$ for $1\le j\le J$ and
$l_j< k_{j+1}$ for $1\le j<J-1$.
Consider a kernel
\[K=\sum_{j=1}^J ({\phi}_{0,j}-{\phi}_{1,j})\otimes  ({\phi}_{0,j}-{\phi}_{1,j}),\]
where, for each $j$,
${(\phi_{0,j})_{(2^{-k_j})}}$ is a left window 
and ${(\phi_{1,j})_{(2^{-l_j})}}$ is a right window.

Then estimate \eqref{lambdabd} holds for any tuple as in \eqref{topbottom}, \eqref{normal1}.
\end{proposition}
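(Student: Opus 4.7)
The plan is to reduce Proposition~\ref{prop:2} to Propositions~\ref{firststick} and~\ref{firstcz} via a positivity argument combined with a carefully chosen decomposition of the identity multiplier on~$\R^2$. Set $g_j := \phi_{0,j} - \phi_{1,j}$, so that $K = \sum_{j=1}^J g_j \otimes g_j$ and $m = \widehat K = \sum_j \widehat{g_j} \otimes \widehat{g_j}$; each $\widehat{g_j}$ is essentially the characteristic function of the annulus $A_j = \{2^{-l_j} \lesssim |\xi| \lesssim 2^{-k_j}\}$, and the interlacing hypothesis $k_j+10 < l_j < k_{j+1}$ forces the $A_j$ to be pairwise disjoint. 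After the substitution $(x_3^0,x_3^1) \mapsto (u,v) = (x_3^0 - s,\, x_3^1 - s)$ with $s = x_0+x_1+x_2$ and using the symmetry $f_{(k,0)} = f_{(k,1)} =: f_k$, the form takes the shape
\[\Lambda_{D_1,K}\bigl((f_s)\bigr) = \int_{\R^3} \sum_{j=1}^J \Bigl(\int_\R g_j(u)\, H_x(u)\, du\Bigr)^2 dx,\]
with $H_x(u) := f_0(u+s, x_1, x_2)\, f_1(x_0, u+s, x_2)\, f_2(x_0, x_1, u+s)$. This is manifestly non-negative, and the same representation shows that enlarging $K$ by any kernel of the form $\sum h \otimes h$ (real $h$) only increases the form.

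The first key step is to use this positivity to replace $K$ by an enlarged kernel $K_1 = K + \sum_j h_j \otimes h_j$, where the real auxiliary functions $h_j$ are chosen so that their Fourier transforms are supported in the complement of the $A_j$ and $\sum_j(\widehat{g_j}^2 + \widehat{h_j}^2) \equiv 1$ on the diagonal. This is possible because $\sum_j \widehat{g_j}(\xi)^2 \le 1$ on the diagonal by disjointness of the $A_j$. The resulting multiplier $m_1 := \widehat{K_1}$ retains the light-blue-square elementary-tensor structure of $m$, is identically~$1$ on the diagonal $\xi=\eta$, and satisfies $\Lambda_{D_1,K}((f_s)) \le \Lambda_{D_1,K_1}((f_s))$.

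I would then decompose the constant multiplier~$\mathbf 1$ on~$\R^2$ as
\[\mathbf 1 = m_1 + m_2 + m_3,\]
where $m_2$ is supported near the at most $J$ off-diagonal elementary-tensor \emph{sticks} obtained by translating each light-blue tensor of $m_1$ along one coordinate axis (matching the two-scale hypothesis of Proposition~\ref{firststick}), and $m_3$ fills the at most $J$ Whitney off-diagonal \emph{purple} regions between consecutive sticks and vanishes on the diagonal (matching the hypothesis of Proposition~\ref{firstcz}). By positivity and the triangle inequality,
\[|\Lambda_{D_1,K}((f_s))| \le \Lambda_{D_1,K_1}((f_s)) \le |\Lambda_{D_1,\mathbf 1}((f_s))| + |\Lambda_{D_1,K_2}((f_s))| + |\Lambda_{D_1,K_3}((f_s))|.\]
Setting $u=v=0$, the trivial form collapses to $\int_{\R^3} f_0(s,x_1,x_2)^2\, f_1(x_0,s,x_2)^2\, f_2(x_0,x_1,s)^2\, dx_0\, dx_1\, dx_2$, bounded by H\"older with exponents $\tfrac14+\tfrac14+\tfrac12=1$ and obvious changes of variable by $\|f_0\|_8^2 \|f_1\|_8^2 \|f_2\|_4^2 = 1$. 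The other two terms are bounded by $CJ^{1/2}$ via Propositions~\ref{firststick} and~\ref{firstcz}, yielding~\eqref{lambdabd}.

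The hard part is engineering the decomposition $\mathbf 1 = m_1 + m_2 + m_3$ so that all three input assumptions are respected simultaneously: $m_1$ must enlarge $m$ positively, be constant on the diagonal, and carry a complementary tensor decomposition; $m_2$ must consist of exactly $J$ off-diagonal elementary tensors in the two-scale form demanded by Proposition~\ref{firststick}; and $m_3$ must consist of exactly $J$ Whitney pieces vanishing on the diagonal as required by Proposition~\ref{firstcz}. The interlacing $k_j + 10 < l_j < k_{j+1}$ is what makes this possible, as it simultaneously keeps the $A_j$ disjoint, separates the sticks from each other and from the diagonal, and keeps the purple regions genuinely Whitney, ultimately producing the $J^{1/2}$ loss rather than anything worse.
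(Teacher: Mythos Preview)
Your outline matches the paper's strategy: positivity lets you enlarge $K$ to a $K_1$ whose multiplier is constant on the diagonal; then the constant is handled trivially by H\"older, and the remainder splits into stick terms for Proposition~\ref{firststick} and Whitney off-diagonal terms for Proposition~\ref{firstcz}. Two issues.

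First, the relevant diagonal is $\{\xi+\eta=0\}$, not $\{\xi=\eta\}$; this is where Proposition~\ref{firstcz} requires vanishing and where the sticks of Proposition~\ref{firststick} automatically vanish. Since all windows are even the two restrictions coincide for tensors $\widehat g\otimes\widehat g$, so this is a slip rather than a fatal error, but you must verify the hypothesis of~\ref{firstcz} on the correct diagonal.

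Second, and this is the real gap, you have not constructed the decomposition, and your verbal description does not match what is required. The paper's construction is specific: for each $\phi_{1,j}$ pick $\psi_{0,j}$ so that the rescaled pair is universal, and for each $\phi_{0,j}$ pick $\psi_{1,j}$ similarly. The universal-pair identity $\widehat{\phi_0}^{\,2}+(1-\widehat{\phi_1})^2=1$ then forces the exact pointwise partition
\[(1-\widehat{\psi_{1,1}})^2+\sum_{j=1}^J(\widehat{\phi_{0,j}}-\widehat{\phi_{1,j}})^2+\sum_{j=1}^{J-1}(\widehat{\psi_{0,j}}-\widehat{\psi_{1,j+1}})^2+\widehat{\psi_{0,J}}^{\,2}=1.\]
Relabel these $2J{+}1$ tensors as $(\widehat{\varphi_{0,j}}-\widehat{\varphi_{1,j}})^{\otimes 2}$ for $0\le j\le 2J$, introduce $\varphi_{2,j}:=(\varphi_{1,j})_{(2^{-4})}$, and use the elementary identity
\[(a-b)^{\otimes 2}=-\bigl[(a-c)\otimes b+b\otimes(a-c)\bigr]-\bigl[(c-b)\otimes b+b\otimes(c-b)\bigr]+\bigl[a\otimes a-b\otimes b\bigr]\]
with $a=\widehat{\varphi_{0,j}}$, $b=\widehat{\varphi_{1,j}}$, $c=\widehat{\varphi_{2,j}}$. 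The first bracket gives the sticks, in exactly the three-window form $(\phi_{0,j}-\phi_{2,j})\otimes\phi_{1,j}$ demanded by Proposition~\ref{firststick}; they are not ``translates of light-blue tensors along one axis'' as you write. The remaining two brackets, summed over $j$ and with the constant $1$ subtracted, reshuffle via telescoping (the $a\otimes a$ of one index cancels the $b\otimes b$ of the previous) into $O(J)$ single-scale Whitney pieces vanishing on $\xi+\eta=0$, matching Proposition~\ref{firtcz}. The boundary terms $j=0$ and $j=2J$ are handled separately by direct H\"older bounds. Without this universal-pair construction and algebraic splitting, the mere assertion that $m_2$ and $m_3$ exist with the required shapes is unjustified.
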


Using Corollary \ref{cor:1}, we have the following 
Corollary of Proposition \ref{prop:2}, 
\begin{corollary}
\label{cor:2}
The variant of Proposition \ref{prop:2}, where
the assumption $k_j+10<l_j$ is replaced by the assumption $k_j<l_j$, holds.
\end{corollary}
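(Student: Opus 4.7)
The plan is to split the index set $\{1,\dots,J\}$ into \emph{wide} indices, where $l_j - k_j > 10$, and \emph{narrow} indices, where $1 \le l_j - k_j \le 10$. I then decompose $K = K_{\mathrm{wide}} + K_{\mathrm{narrow}}$ accordingly and handle the two pieces respectively by Proposition~\ref{prop:2} and Corollary~\ref{cor:1}, finally adding the two estimates by the triangle inequality.

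For $K_{\mathrm{wide}}$, I check that the restricted sequences $(k_j)_{j\in \mathrm{wide}}$ and $(l_j)_{j\in\mathrm{wide}}$ still satisfy the full interlacing hypothesis of Proposition~\ref{prop:2}: the gap condition $k_j + 10 < l_j$ holds by definition of wideness, and the inequality $l_{j_1} < k_{j_2}$ for consecutive surviving indices $j_1 < j_2$ is inherited from the original interlacing via $l_{j_1} < k_{j_1+1} \le k_{j_2}$. Proposition~\ref{prop:2} then gives $|\Lambda_{D_1, K_{\mathrm{wide}}}((f_s))| \le C J^{1/2}$ directly.

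For $K_{\mathrm{narrow}}$, each summand is an elementary tensor $\psi_j \otimes \psi_j$ with $\psi_j := \phi_{0,j} - \phi_{1,j}$ real-valued, and I aim to fit it into Corollary~\ref{cor:1} after a universal rescaling. The Fourier-support calculation is immediate from the window definitions: $\widehat{\phi_{0,j}}$ is supported in $\{|\xi|\le 2^{-k_j}\}$ and equals $1$ on $\{|\xi|\le 2^{-k_j - 1}\}$, and similarly $\widehat{\phi_{1,j}}$ at scale $2^{-l_j}$. Since $k_j < l_j$, the two Fourier transforms coincide on $\{|\xi|\le 2^{-l_j - 1}\}$, so $\widehat{\psi_j}$ is supported in the shell $\{2^{-l_j - 1} \le |\xi| \le 2^{-k_j}\}$, which for narrow $j$ lies inside $\{|\xi| \in [2^{-k_j - 20}, 2^{-k_j + 20}]\}$ as required by Corollary~\ref{cor:1}. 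For the physical-space decay, the bound on the $(N+30)$-th derivative of the windows' compactly supported Fourier transforms gives $(1+|u|)^{-(N+30)}$ decay of each window, and rescaling $\phi_{1,j}$ from scale $2^{-l_j}$ to the common scale $2^{-k_j}$ only costs a bounded multiplicative factor since $2^{l_j - k_j} \le 2^{10}$. Together these yield $|(\psi_j)_{(2^{-k_j})}(u)| \le A(1+|u|)^{-20}$ for some universal constant $A$. Since the $(k_j)_{j\in\mathrm{narrow}}$ form a strictly monotone subsequence of the full interlacing $k_1 < l_1 < k_2 < \cdots$, Corollary~\ref{cor:1} applied to $K_{\mathrm{narrow}}/A^2$ produces $|\Lambda_{D_1, K_{\mathrm{narrow}}}((f_s))| \le A^2 C J^{1/2}$.

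The main obstacle, such as it is, is the bookkeeping in the Fourier-support and decay verification for the narrow pieces; but no new harmonic-analytic ingredient is needed, because a narrow $\phi_{0,j} - \phi_{1,j}$ is essentially a single-scale Littlewood--Paley-like bump and Corollary~\ref{cor:1} already encodes the cancellation required for elementary-tensor kernels at well-separated scales. The triangle inequality then combines the wide and narrow bounds into \eqref{lambdabd} for $K$, proving the corollary.
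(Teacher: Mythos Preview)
Your proof is correct and follows essentially the same approach as the paper: split into narrow indices ($l_j-k_j\le 10$) handled by Corollary~\ref{cor:1} and wide indices ($l_j-k_j>10$) handled by Proposition~\ref{prop:2}. The paper's argument is stated in one sentence, and your write-up simply supplies the routine verifications (Fourier support, decay, and interlacing for the subsequences) that the paper leaves to the reader.
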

To see this corollary, we split the sequence into terms with
$k_j+10 \geq l_j$ and $k_j+10<l_j$. The former terms are estimated with
 Corollary~\ref{cor:1}, while the latter are estimated with
Proposition \ref{prop:2}.

In contrast to the last proposition, the kernel of the next proposition does not
oscillate on the critical frequency diagonal $\xi+\eta=0$. The complexity still is controlled by  $J$. We no longer have the positivity assumptions, but  we do satisfy standard symbol estimates, 
with bounds depending on the parameter $k$.

\begin{prop}[off-diagonal, Whitney, 2D] [Proved in Section \ref{sec:firstcz}] \label{firstcz} 
Let $\lambda=\frac{3}{2}$. There exists a constant $C>0$  such that the following holds for all $k\leq 0$. Let $J$ be a positive integer and let $(k_j)_{j=1}^J$ be a finite 
strictly increasing sequence of integers. Let $(\Phi_j)_{j=1}^J$ be a finite sequence of real valued functions on $\R^2$.
Assume that 
\[\textup{supp}(\widehat{\Phi_j}) \subseteq \{(\xi,\eta)\in \R^2 : 2^{-k_{j}-30}\leq |(\xi,\eta)| \leq 2^{-k_j+30} \}.  \]
Assume further that  for all $(u,v)\in \R^2$, 
\begin{equation}\label{E:phi-lambda3/2}
|(\Phi_j)_{(2^{-k_j})}(u,v)| \leq  2^{\lambda k}(1+2^k|u+v|)^{-4} (1+|u-v|)^{-4} 
+(1+|u+v|)^{-4} (1+|u-v|)^{-4}.
\end{equation}
Let $K$ be defined by
\[
K=\sum_{j=1}^J \Phi_{j}
\]
and assume that $\widehat{K}$ vanishes on the diagonal $\{(\xi,\eta)\in \R^2: \xi+\eta=0\}$.

Then estimate \eqref{lambdabd} holds for any tuple as in \eqref{topbottom}, \eqref{normal1}.
\end{prop}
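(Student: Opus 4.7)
The plan is to exploit the vanishing of $\widehat{K}$ on the anti-diagonal $\{\xi+\eta=0\}$ via a lacunary decomposition of $\widehat{K}$ into cones approaching this anti-diagonal, as sketched in Figure~\ref{fig:cones}. The anisotropy in the bound~\eqref{E:phi-lambda3/2} is aligned with $\xi+\eta$ (spatial scale $2^{-k}$ in $u+v$ versus $1$ in $u-v$), so each $\widehat{\Phi_j}$ is naturally supported in a thin neighborhood of the anti-diagonal within its Whitney annulus, and the angular decomposition is precisely adapted to this geometry.

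Concretely, I would fix a smooth dyadic partition of unity $\sum_{n\ge 0}\psi_n \equiv 1$ on $\R^2\setminus\{\xi+\eta=0\}$ with each $\psi_n$ supported in a cone $\{|\xi+\eta|\sim 2^{-n}|\xi-\eta|\}$, and split $\widehat{K}=\sum_{n\ge 0}\psi_n\widehat{K}$. Since $\widehat{K}$ vanishes on $\{\xi+\eta=0\}$, a first-order Taylor expansion in the $\xi+\eta$ variable produces on the $n$-th cone a pointwise gain of $|\xi+\eta|/|\xi-\eta|\sim 2^{-n}$ relative to the Whitney bound for $\widehat{K}$ itself. For each fixed $n$, an anisotropic dilation in frequency that stretches by $2^n$ in the $\xi+\eta$ direction sends each angular sector within each Whitney annulus to a standard isotropic Whitney square in the new coordinates; absorbing the dual spatial dilation into the linear maps of the datum $D_1$ transforms the restricted multiplier into a standard Calder\'on--Zygmund symbol on a Whitney annular piece in the new coordinates. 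Invoking Lemma~3 of~\cite{MR4171366} as a black box then yields a bound of the form $C\, 2^{\alpha n}\, J^{1/2}$ for the $n$-th piece, where the growth $2^{\alpha n}$ with $\alpha<1$ accounts for the Jacobian of the dilation distributed via H\"older's inequality among the $L^p$ norms of the test functions. Combined with the $2^{-n}$ cancellation gain, summing in $n$ produces a geometric series bounded by $CJ^{1/2}$.

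The main obstacle I anticipate is in the per-cone estimate: verifying that the anisotropic dilation transforms the mixed anisotropic/isotropic bound~\eqref{E:phi-lambda3/2} into a uniform Whitney-type symbol estimate compatible with the hypotheses of Lemma~3 of~\cite{MR4171366}, while ensuring that the Jacobian cost $2^{\alpha n}$ is strictly less than the cancellation gain $2^{-n}$ after interpolation. If this fails uniformly in the anisotropic regime, I would split~\eqref{E:phi-lambda3/2} into its two summands and treat them separately; the isotropic summand is handled by a direct application of Lemma~3 of~\cite{MR4171366} without angular decomposition, and the anisotropic summand is handled by the cone decomposition, where the built-in alignment of anisotropy with the anti-diagonal makes the cancellation gain dominant.
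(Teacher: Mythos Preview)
Your high-level strategy — a lacunary cone decomposition towards $\{\xi+\eta=0\}$ combined with the cancellation gain from vanishing on the anti-diagonal — is exactly the right picture and matches the paper. However, the step where you write ``Invoking Lemma~3 of~\cite{MR4171366} as a black box then yields a bound of the form $C\,2^{\alpha n}\,J^{1/2}$'' hides a genuine gap.

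Lemma~3 of~\cite{MR4171366} (restated as Lemma~\ref{L:lemmaDT} in the paper) is formulated for the \emph{cubical} datum $D_A$: eight functions indexed by $\{0,1\}^3$, all in $L^8$. The datum $D_1$ is a \emph{prism}: six functions indexed by $\{0,1,2\}\times\{0,1\}$ with mixed exponents $(8,8,4)$. No anisotropic dilation in the kernel variables will change this structural mismatch. Moreover, the lemma outputs a constant, not $J^{1/2}$; your proposal does not explain where the $J^{1/2}$ arises.

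The paper's missing ingredient is a Cauchy--Schwarz inequality in the $x_2$ variable (the variable paired with $f_2\in L^4$). This simultaneously (i) separates off $f_2$ into a factor bounded trivially by $\|f_2\|_4^4$ times an $L^1$-norm of the localized kernel, costing a factor $J$ from the Whitney region $\int_M dt/t\lesssim J$; and (ii) squares the remaining integrand in $f_0,f_1$, which doubles three functions into a genuinely cubical tuple of eight $L^8$ functions. Only then does the resulting form fit the hypotheses of Lemma~\ref{L:lemmaDT}, with a specific matrix $A$ depending on the cone parameter whose determinant and norm stay uniformly controlled. The geometric mean of the two factors gives $J^{1/2}$, and the $z$-decay from the anti-diagonal cancellation (your $2^{-n}$) makes the cone sum converge. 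Your anisotropic-dilation shortcut bypasses this Cauchy--Schwarz, and I do not see how to land in the scope of the cited lemma without it.
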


The kernel of the next proposition also vanishes on the critical diagonal. It does not satisfy standard two-dimensional symbol
estimates uniformly in $k_j$. It has no positivity assumption,
but similarly to some of the positive kernels above it 
is a sum of $J$ tensors 
with few scales in it.

\begin{prop} [off-diagonal, non-Whitney, 2D][Proved in Section \ref{sec:firststick}]\label{firststick} 
There exists a constant $C>0$ such that the following holds.
Let $J$ be a positive integer and $(k_j)_{j=0}^J$   a   finite increasing sequence of integers with $k_{j-1}+10 \le k_{j}$
for $1\le j \leq J$. 
For $1\le j\le J$,  let $\phi_{0,j}$, $\phi_{1,j}$, $\phi_{2,j}$
be functions such that
${(\phi_{0,j})_{(2^{-k_{j-1}})}}$ is a left window, while
${(\phi_{1,j})_{(2^{-k_j})}}$  and 
${(\phi_{2,j})_{(2^{4-k_j})}}$ are right windows.
Define 
\begin{equation}\label{kerfirststick}
K = \sum_{j=1}^J (\phi_{0,j}-\phi_{2,j})\otimes  \phi_{1,j} .  
\end{equation} 

Then estimate \eqref{lambdabd} holds for any tuple as in \eqref{topbottom}, \eqref{normal1}.
 \end{prop}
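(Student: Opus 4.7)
The plan is to exploit the tensor structure of $K$ together with an $\ell^2$-valued Cauchy--Schwarz, using the mean-zero cancellation $\widehat{\phi_{0,j}}(0) - \widehat{\phi_{2,j}}(0) = 0$ (the off-diagonal property) to preserve cancellation in the innermost integral. After the substitution $u = x_3^0 - \sigma$, $v = x_3^1 - \sigma$ with $\sigma = x_0 + x_1 + x_2$, and the symmetry identification $f_{(k,0)} = f_{(k,1)} =: f_k$, I rewrite
\[
\Lambda_{D_1, K}((f_s)_{s \in S}) = \sum_{j=1}^J \int_{\R^3} a_j(x)\,b_j(x)\,dx,
\]
with $a_j(x) = \int_{\R} (\phi_{0,j} - \phi_{2,j})(u)\,g(x, u)\,du$, $b_j(x) = \int_{\R} \phi_{1,j}(v)\,g(x, v)\,dv$, and
\[
g(x, y) = f_0(y + \sigma, x_1, x_2)\,f_1(x_0, y + \sigma, x_2)\,f_2(x_0, x_1, y + \sigma).
\]
The $\ell^2_j$-valued Cauchy--Schwarz then yields
\[
|\Lambda|^2 \le \Bigl(\sum_j \|a_j\|_{L^2(\R^3)}^2\Bigr)\Bigl(\sum_j \|b_j\|_{L^2(\R^3)}^2\Bigr).
\]

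For the lossy factor I aim to show $\|b_j\|_{L^2(\R^3)} \le C$ uniformly in $j$. By Minkowski's inequality this reduces to the uniform bound $\sup_v \|g(\cdot, v)\|_{L^2(\R^3)} \le 1$, which follows from H\"older's inequality with exponents $(4, 4, 2)$ applied after changes of variables absorbing the $v$-shift separately into each $f_k$, using the normalizations $\|f_0\|_8 = \|f_1\|_8 = \|f_2\|_4 = 1$, together with $\|\phi_{1,j}\|_{L^1} = O(1)$ (since $(\phi_{1,j})_{(2^{-k_j})}$ is a right window). Summing this uniform bound over the $J$ terms contributes the required $J^{1/2}$ factor.

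For the loss-free factor $\sum_j \|a_j\|_{L^2(\R^3)}^2$, expanding the square and returning to the original variables realizes this sum as an SBL form whose multiplier, viewed in the three Fourier variables dual to the translation directions of the $f_k$, naturally becomes three-dimensional, consisting of pieces of the tensor form $\phi_1(\theta \cdot v_1)\,\phi_2(\theta \cdot v_2)$ with $v_1 = (0, 0, 1)$ and $v_2 = (1, -1, 0)$---precisely matching the datum $D_2$ from~\eqref{defd2}. Since the resulting multiplier does not vanish on the critical space spanned by $v_1, v_2$, I would first apply Proposition~\ref{secstickphi} to modify it into a more symmetric multiplier $m'$, and then invoke Proposition~\ref{thm:4} to bound this factor by $O(1)$ independently of $J$. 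The principal obstacle is exactly this loss-free estimate: the naive two-dimensional approach of bounding $\sum_j \|a_j\|_{L^2}^2$ via Corollary~\ref{cor:2} applied to the squared kernel $\sum_j (\phi_{0,j} - \phi_{2,j}) \otimes (\phi_{0,j} - \phi_{2,j})$ would only yield $O(J^{1/2})$ and hence the suboptimal $|\Lambda| \le C J^{3/4}$; the three-dimensional $D_2$ reformulation, combined with the cancellation on the critical space secured by Propositions~\ref{secstickphi} and~\ref{thm:4}, is what upgrades the factor to $O(1)$ and closes the argument at $CJ^{1/2}$.
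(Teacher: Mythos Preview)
Your Cauchy--Schwarz is applied in the wrong variables, and this is why the claimed $D_2$ reformulation of the loss-free factor does not exist. When you expand $\sum_j\|a_j\|_{L^2(\R^3)}^2$ you obtain
\[
\sum_{j=1}^J \int_{\R^5} \Big[\prod_{k=0}^2 f_k(x+ue_k)f_k(x+ve_k)\Big](\phi_{0,j}-\phi_{2,j})(u)(\phi_{0,j}-\phi_{2,j})(v)\,dx\,du\,dv,
\]
which is precisely $\Lambda_{D_1,K'}((f_s)_{s\in S})$ with $K'=\sum_j(\phi_{0,j}-\phi_{2,j})^{\otimes 2}$. This is a $D_1$ form in which all three functions $f_0,f_1,f_2$ appear; the datum $D_2$ has eight input functions built only from $f_0,f_1$. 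There is no passage from one to the other without a further Cauchy--Schwarz that eliminates $f_2$, and your scheme has already spent its Cauchy--Schwarz. So the only available bound on $\sum_j\|a_j\|^2$ is indeed the $O(J^{1/2})$ from Corollary~\ref{cor:2}, yielding the suboptimal $J^{3/4}$ you yourself flag.

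The paper's proof organizes the Cauchy--Schwarz differently. First, it uses the full off-diagonal structure (not merely $\widehat{\phi_{0,j}}(0)=\widehat{\phi_{2,j}}(0)$, but that $|\xi+\eta|\sim 2^{-k_j}$ on the support of the $j$-th summand) to insert an artificial factor $\widehat{\phi_{3,j}}(\xi+\eta)$ with $(\widehat{\phi_{3,j}})^2=(\widehat{\chi_{j-1}})^2-(\widehat{\chi_j})^2$. On the spatial side this produces an extra integral in a variable $p$ with a factor $\phi_{3,j}(x_2+p)$. Then Cauchy--Schwarz is applied with the $x_2$-integral innermost and in the outer variables $x_0,x_1,x_3^0,x_3^1,p$. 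The lossy factor carries $|f_2|^2$ against the weight $\mu_j=|\phi_{0,j}-\phi_{2,j}|\otimes|\phi_{1,j}|$ and is $\lesssim J$; the loss-free factor carries the square of $\int f_0 f_1\,\phi_{3,j}(x_2+p)\,dx_2$, so $f_2$ has disappeared and one is genuinely in the $D_2$ setting, where Propositions~\ref{thm:4} and~\ref{secstickphi} give $O(1)$. The insertion of $\phi_{3,j}$ before Cauchy--Schwarz and the choice to isolate the $x_2$-integral are the two ingredients your plan is missing.
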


The remaining propositions share a  singular  Brascamp-Lieb datum $D_2$.
The datum $D_2$ arises as a reduction from $D_1$ after a  Cauchy-Schwarz inequality.
Put coordinates  $x=(x_0,x_1,x_{2}^0,x_{3}^0,x_{2}^1,x_{3}^1)$ on $\R^6$.
Define 
\begin{equation}\label{defd2}
    D_2:=(6,S, \Pi,(\Pi_s)_{s\in S})
\end{equation} with 
$S=\{0,1\}\times {\mathcal C}$,  where
${\mathcal C}$ is the set of functions $j:\{0,1\}\to \{0,1\}$,  with $\Pi$ mapping $\R^6$ to $\R^3$ as
\[\Pi(x) = (x_2^0-x_0-x_1-x_3^0, x_2^1-x_0-x_1-x_3^0, x_3^1-x_3^0),\]
and with $\Pi_s$ for $s=(k,j)$ mapping $\R^6$ to $\R^3$ as
\[\Pi_{(k,j)}(x)= (x_k, x_2^{j(0)}, x_3^{j(1)}). \]

For this datum $D_2$ and a kernel $K$, we are interested in a    loss free estimate 
\begin{equation}\label{lambdabd1}
|\Lambda_{D_2,K} ((f_s)_{s\in S})|\le C 
\end{equation}
for any tuple of real-valued Schwartz functions $(f_s)_{s\in S}$ with 
\begin{equation}\label{twosides}
f_{(k,j)}=f_{(k,j')} 
\end{equation}
for all $k\in\{0,1\}$ and $j,j'\in \mathcal{C}$, and
\begin{equation}\label{normal2}
\|f_{s}\|_{8}=1
\end{equation}
for all $s\in S$.

The next proposition is a variant of Proposition \ref{prop:2},
adjusted to the datum $D_2$. The kernel has some positivity properties and pieces arising from suitable elementary tensor structure. The complexity $J$ here is not relevant, as we obtain estimates independent of $J$.

We write $g$ for the Gaussian 
$g(x)=e^{-\pi |x|^2}$,
typically in one dimension but occasionally in more than one dimension.
We have $\widehat{g}=g$. We write $h$ for the derivative of the Gaussian in one dimension, $h(x)=-2\pi x g(x)$. Recall $N=2^{18}$.

 \begin{proposition}[on-critical, non-Whitney, 3D][Proved in Section \ref{sec:thm:4}]\label{thm:4}
There exists a constant $C>0$ such that the following holds.
Let $\alpha\ge 1$. Let $J$ be a positive integer and $(k_j)_{j=0}^J$   a   finite increasing sequence of integers with $k_{j-1}+10 \leq k_{j}$ for $1\le j \leq J$. Let $(m_j)_{j=1}^J$ be a sequence of real numbers with
$k_j-1\le m_j\le k_j$ for $1\le j\le J$.  For $0\le j\le J$, let $\chi_j$
be a function such that ${(\chi_j)_{(2^{2-k_j})}}$ is a left window
and let $\phi_{j}$ be such that $\wh{\phi_j} \geq 0$ and
$$ (\widehat{\phi_{j}})^2=(\widehat{\chi_{j-1}})^2-
(\widehat{\chi_{j}})^2.$$
Let
\begin{equation*}
K(u,v,z)=\alpha^{-N} \sum_{j=1}^J \int_{\R}g_{(\alpha 2^{m_j})}(u+p) g_{(\alpha 2^{m_j})}(v+p)
 {\phi}_{j}(z+p){\phi}_{j}(p)  \, dp.
\end{equation*}

Then estimate \eqref{lambdabd1} holds for any tuple as in \eqref{twosides}, \eqref{normal2}.
\end{proposition}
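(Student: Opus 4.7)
My plan is to follow the strategy sketched in the introduction. First I would compute the Fourier transform of $K$: the substitution $u'=u+p$, $v'=v+p$, $z'=z+p$ in the defining integral, together with the evenness of $g$ and $\phi_j$, gives
\begin{equation*}
\widehat{K}(\xi,\eta,\zeta)=\alpha^{-N}\sum_{j=1}^J\widehat{\phi_j}(\xi+\eta+\zeta)\,\widehat{\phi_j}(\zeta)\,\widehat{g}(\alpha 2^{m_j}\xi)\,\widehat{g}(\alpha 2^{m_j}\eta).
\end{equation*}
This multiplier has the promised two-dimensional tensor structure: a Gaussian piece in the $(\xi,\eta)$-plane and a $\widehat{\phi_j}\otimes\widehat{\phi_j}$ piece in the skew coordinates $(\zeta,\xi+\eta+\zeta)$. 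On the critical subspace $\xi+\eta=0$ it reduces to $\alpha^{-N}\sum_j\widehat{\phi_j}(\zeta)^2\widehat{g}(\alpha 2^{m_j}\xi)^2$, which is nonvanishing; this rules out the lacunary-decomposition strategy available in the off-critical case and forces the telescoping argument below.

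Next I would introduce the telescoping identity. The idea is to use the given identity $(\widehat{\phi_j})^2=(\widehat{\chi_{j-1}})^2-(\widehat{\chi_j})^2$ to relate $\widehat{\phi_j}(\zeta)\widehat{\phi_j}(\xi+\eta+\zeta)$ to differences of the tensor products $\widehat{\chi_j}(\zeta)\widehat{\chi_j}(\xi+\eta+\zeta)$ in $j$, and in parallel to telescope the Gaussians $\widehat{g}(\alpha 2^{m_j}\cdot)$ by inserting auxiliary low-pass windows at the intermediate scales. The Leibniz-type identity $AB-A'B'=A(B-B')+(A-A')B'$, applied term by term, then rewrites $\widehat{K}$, modulo a trivial bounded two-term tensor coming from the extreme scales, as a sum of two multipliers: one whose difference factor acts in the skew plane $(\zeta,\xi+\eta+\zeta)$, and one whose difference factor acts in the $(\xi,\eta)$-plane.

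The key observation is that both constructed multipliers are positive Brascamp--Lieb kernels. The first inherits positivity from the reflection symmetry $\zeta\leftrightarrow\xi+\eta+\zeta$ which swaps the two $\widehat{\phi_j}$ factors, and the second from the reflection symmetry $\xi\leftrightarrow\eta$ of the Gaussian tensor. Since they sum to a bounded tensor estimable directly by H\"older and Plancherel, each positive piece is individually bounded. By construction, the difference factors in each piece vanish on the critical subspace, so they are precisely the off-critical type of object controlled by the toolbox already developed: Theorem~1.1 of \cite{MR4510172} in the Whitney regime and Propositions~\ref{secstickphi} and \ref{secstickgauss} in the non-Whitney regime.

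The final step is to dominate $\widehat{K}$ itself by one of these positive pieces, producing the claimed estimate. The hardest part, I expect, is to carry out this domination while keeping the estimate loss-free: the Gaussian scales $\alpha 2^{m_j}$ with $m_j\in[k_j-1,k_j]$ do not match the integer scales $k_j$ of the $\phi_j$'s exactly, and each time one converts between them via an auxiliary window one loses constants that are a priori $J$-dependent. The role of the large exponent $N=2^{18}$ and of the $\alpha^{-N}$ prefactor is precisely to absorb this loss: universal pairs carry high-order derivative bounds, and combined with the decay $\alpha^{-N}$ they should turn each scale-mismatch correction into a harmless constant and yield a bound independent of $J$ and $\alpha$.
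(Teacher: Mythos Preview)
Your high-level outline captures the telescoping-plus-positivity skeleton, but two concrete claims in the plan are false, and the missing ingredients are exactly what make the proof work.

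\textbf{The positivity claim is wrong.} You assert that the two Leibniz pieces of the telescope are positive Brascamp--Lieb kernels by virtue of the reflection symmetries $\xi\leftrightarrow\eta$ and $\zeta\leftrightarrow\xi+\eta+\zeta$. Symmetry alone does not give positivity of the associated form; what is needed is that the spatial kernel be an integral of a perfect square. Neither $K_1$ (the piece with difference $\widehat{\chi_{j-1}}\otimes\widehat{\chi_{j-1}}-\widehat{\chi_j}\otimes\widehat{\chi_j}$ in the skew variables) nor $K_2$ (the piece with difference of Gaussian tensors in $\xi,\eta$) has this structure. The paper extracts positivity from $K_2$ by a further step you omit: the polarization identity $t^2(\xi^2+\eta^2)=t^2(\xi+\eta)^2-2t^2\xi\eta$, applied via the fundamental theorem of calculus to the Gaussian difference, splits $K_2$ into an off-critical piece $K_3$ (carrying the factor $(\xi+\eta)^2$, handled by Proposition~\ref{secstickgauss}) and a piece $K_4=\int h_{(t)}\otimes h_{(t)}\,\frac{dt}{t}\otimes(\cdots)$, which \emph{is} a spatial square and hence positive. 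Only then does the hidden symmetry \eqref{hiddensymm} transport positivity to a dual piece $K_6$, and $K_5+K_6$ is telescoped a second time. Relatedly, your claim that ``the difference factors in each piece vanish on the critical subspace'' is false: on $\xi+\eta=0$ the Gaussian difference becomes $g(\alpha 2^{m_{j-1}}\xi)^2-g(\alpha 2^{m_j}\xi)^2\neq 0$ and the $\chi$-difference becomes $\widehat{\phi_j}(\zeta)^2\neq 0$.

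\textbf{The replacement $K\to K_1$ produces a nontrivial error, and the role of $\alpha^{-N}$ is different from what you describe.} The identity $(\widehat{\phi_j})^2=(\widehat{\chi_{j-1}})^2-(\widehat{\chi_j})^2$ only relates $\widehat{\phi_j}(\zeta)\widehat{\phi_j}(\xi+\eta+\zeta)$ to $\widehat{\chi_{j-1}}(\zeta)\widehat{\chi_{j-1}}(\xi+\eta+\zeta)-\widehat{\chi_j}(\zeta)\widehat{\chi_j}(\xi+\eta+\zeta)$ on the critical plane; off it, the error $K-K_1$ must be controlled separately. The paper decomposes $K-K_1$ into two off-critical ``stick'' pieces $K_{10},K_{11}$ (handled by Proposition~\ref{secstickphi}) and a remainder $K_{12}$ whose multiplier is a genuine H\"ormander--Mikhlin symbol. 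It is here, and only here, that $\alpha^{-N}$ enters: differentiating $g(\alpha 2^{m_j}\xi)$ up to order $N$ produces factors $(\alpha 2^{m_j})^{|\gamma|}$, and $\alpha^{-N}$ absorbs the $\alpha^{|\gamma|}$ so that Theorem~1.1 of \cite{MR4510172} applies. It is not a device for absorbing scale-mismatch corrections between $m_j$ and $k_j$, which are harmless since $|m_j-k_j|\le 1$.
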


Proposition \ref{thm:4} will be proven using the next two propositions.
Both involve the datum $D_2$. Both exploit
a vanishing of the function $\widehat{K}$
on the critical space $\xi+\eta=0$.

\begin{prop}[off-critical, non-Whitney, 3D, discrete][Proved in Section \ref{sec:secstick}]
\label{secstickphi} 
 There is a constant $C$ such that the following holds. Let $J$ be a positive integer. 
For $1\le i\le 2$,  let  $(a_{i,j})_{j=1}^{J}$  be increasing sequences of positive real numbers.

For $1\leq j \leq J$,  let $\rho_{j}:\R^4\to \R$ be a continuous function satisfying 

\begin{equation}\label{rhobound}
\int_{\R^2} |\rho_{j} | (u_1+p,u_2+p,u_3+r,u_4+r)\, dp dr
 \le a_{1,j}^{-1}(1+a_{1,j}^{-1}|u_1-u_2|)^{-2}
 a_{2,j}^{-1}(1+a_{2,j}^{-1}|u_3-u_4|)^{-2}
\end{equation}
for every $(u_1,u_2,u_3,u_4)\in \R^4$.
Let $(c_j)_{j=0}^J$ be an increasing sequence of positive {real} numbers,
well separated in that $2c_{j-1}\le c_j$ for $1\le j\le J$.
 Let $\chi$ be a left window.
For $1\le j\le J$ let $\phi_{j}:\R\to \R$ be   a continuous function, which exists due
 to the left window property of $\chi$, satisfying  $\wh{\phi_j} \geq 0$  and 
 \[(\wh{\phi_{j}})^2 = (\wh{{\chi}_{(c_{j-1})}})^2 
 - (\wh{{\chi}_{(c_j)}})^2. \]
 Let $K$ be defined by   
      \begin{equation}
        \label{ker1space}
    {K}(u,v,z)= \sum_{j=1}^J \int_{\R^3}{\phi_{j}}(p)
    {\phi_{j}}(q) {\rho_{j}}(u+p+q+r,v+p+q+r,z+r,r) \,dpdqdr. 
    \end{equation}

      Then estimate \eqref{lambdabd1} holds for any tuple as in \eqref{twosides}, \eqref{normal2}.
 \end{prop}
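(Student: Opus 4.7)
First I would verify that $\widehat K$ vanishes on the critical hyperplane $\{\xi+\eta=0\}$. Substituting $s=p+q$ rewrites
\[
K(u,v,z)=\sum_{j=1}^J\int_{\R^2}(\phi_j*\phi_j)(s)\,\rho_j(u+s+r,v+s+r,z+r,r)\,ds\,dr,
\]
so the Fourier transform in $(u,v,z)$ produces a factor $\widehat{\phi_j}(-(\xi+\eta))^2$. Since $\chi$ is a left window we have $\widehat{\chi_{(c)}}(0)=\widehat{\chi}(0)=1$ for every $c>0$, and the defining identity for $\phi_j$ then gives
\[
\widehat{\phi_j}(0)^2=\widehat{\chi_{(c_{j-1})}}(0)^2-\widehat{\chi_{(c_j)}}(0)^2=0,
\]
so $\widehat K$ vanishes on $\{\xi+\eta=0\}$. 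This structural cancellation is what will allow a bound uniform in $J$.

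Next, exploiting this vanishing together with the well-separation $2c_{j-1}\le c_j$, I would decompose $K=\sum_{\ell\in\Z}K_\ell$ into lacunary pieces with $\widehat{K_\ell}$ supported in a shell $|\xi+\eta|\sim 2^\ell$. For each piece I would apply the Cauchy--Schwarz inequality in the variables $x_2^{j(0)}$ and $x_3^{j(1)}$, using the cubical structure of the datum $D_2$ indexed by $(k,j(0),j(1))\in\{0,1\}^3$ together with the symmetry \eqref{twosides}, which makes $f_{(k,j)}$ independent of $j\in\mathcal{C}$. The $L^8$ normalization \eqref{normal2} is exactly what is needed for Cauchy--Schwarz to fold the cube while preserving the estimate. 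The problem is thereby reduced to a standard cubical singular Brascamp--Lieb form on $\R^3$ with a kernel built from $K_\ell$ and its translates, to which Theorem~1.1 of~\cite{MR4510172} applies at each fixed scale $\ell$.

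The main obstacle, and the step where the non-Whitney nature of the problem must be confronted, is to prevent the sum over $j$ and the lacunary sum over $\ell$ from producing a factor growing with $J$. This is addressed by a three-fold discrete telescoping identity along the scales $c_j$: the identity
\[
\sum_{j=1}^J\widehat{\phi_j}(\zeta)^2=\widehat{\chi_{(c_0)}}(\zeta)^2-\widehat{\chi_{(c_J)}}(\zeta)^2
\]
collapses the $J$-fold sum into a difference of two endpoint smooth cutoffs, and is applied once in each of the three Fourier directions of the cubical three-dimensional kernel produced by Cauchy--Schwarz. Combined with the geometric decay in $\ell$ supplied by the vanishing on $\{\xi+\eta=0\}$, the lacunary pieces then sum to a bound independent of $J$. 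The careful bookkeeping required to align the telescoping with the lacunary decomposition and the Cauchy--Schwarz is the discrete counterpart to the partial-integration argument used in Proposition~\ref{secstickgauss}.
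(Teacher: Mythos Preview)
Your proposal has a genuine gap. You plan to localize in $|\xi+\eta|\sim 2^\ell$ and then invoke Theorem~1.1 of~\cite{MR4510172} on each piece, but that theorem requires uniform H\"ormander--Mikhlin symbol bounds, and the non-Whitney kernel here does not satisfy them: the scales $a_{1,j},a_{2,j}$ governing $\rho_j$ are completely decoupled from the scales $c_j$ governing $\phi_j$, so even after fixing $|\xi+\eta|$ you have no control on derivatives of the symbol in the $(\tau,-\xi-\eta-\tau)$ directions. Your telescoping $\sum_j\widehat{\phi_j}^2=\widehat{\chi_{(c_0)}}^2-\widehat{\chi_{(c_J)}}^2$ collapses only the $(\xi+\eta)$-variable; the $\rho_j$ still carry the index $j$ through $a_{1,j},a_{2,j}$, so the sum over $j$ does not disappear, and you have not explained where the other two telescoping directions come from.

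The paper's proof is self-contained and does not use~\cite{MR4510172}. It applies Cauchy--Schwarz so as to isolate the $x_0$- (and $x_1$-) integral containing the factor $\phi_j(x_i+p_i)$; this produces a \emph{square} and hence a nonnegative integrand. Positivity then permits dominating $|\rho_j|$ pointwise by tensor Gaussians at the scales $a_{1,j},a_{2,j}$, reducing matters to the cubical datum $D_{-I}$ with kernel
\[
K_1=\sum_{j=1}^J(\phi_j*\phi_j)\otimes(g*g)_{(a_{1,j})}\otimes(g*g)_{(a_{2,j})}.
\]
The three-fold telescoping is $K_1+K_2+K_3=\sigma_0-\sigma_J$, where $K_2$ and $K_3$ carry the consecutive difference in the $a_{1,j}$- and $a_{2,j}$-directions respectively and $\sigma_j$ is a single tensor of bumps. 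The crucial point is that each $\Lambda_{D_{-I},K_i}$ is itself \emph{nonnegative}: writing $(g*g)_{(a)}-(g*g)_{(b)}=-\pi^{-1}\int_a^b(h*h)_{(t)}\,dt/t$ again exhibits a square. Since the sum of three nonnegative quantities equals $\Lambda_{D_{-I},\sigma_0-\sigma_J}$, which is bounded by a classical Brascamp--Lieb inequality, each summand is individually $\lesssim 1$. No lacunary decomposition in $|\xi+\eta|$ and no symbol estimates are needed; the argument is purely a positivity-plus-telescoping one.
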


The orthogonal complement $V^\perp$ of the subspace \[V=\{(\xi,\eta,\tau, -(\xi+\eta+\tau), -(\xi+\eta), -(\xi+\eta)): \xi,\eta,  \tau \in \R\},\] 
 of $\R^6$ can be parameterized as  
\[\{(p+q+r,p+q+r,r,r,p,q): p,q,r\in \R\}.\]
As \eqref{ker1space} is an integral over $V^\perp$ of a function $F$ in $\R^6$,
its Fourier transform is the restriction to $V$ of the Fourier transform
of $\widehat{F}$ to that subspace. Hence, for some universal constant $C$,
     \begin{equation}
        \label{ker1}
    \widehat{K}(\xi,\eta,\tau)= C\sum_{j=1}^J \widehat{\phi_{j}}(\xi+\eta)^2 \widehat{\rho_{j}}(\xi,\eta,\tau,-\xi-\eta-\tau). 
    \end{equation}
 This expression shows the vanishing of $\widehat{K}(\xi,\eta,\tau)$
on the hyperplane $\xi+\eta=0$.

Also in the following proposition, $\widehat{K}$ vanishes on $\xi+\eta$.
It is made up by a very specific part in the variables $\xi,\eta$ and a 
rather general part in the variables $\tau$ and $\tau+\xi+\eta$.
 
\begin{prop}[off-critical, non-Whitney, 3D, continuous][Proved in Section \ref{sec:secstick}]
\label{secstickgauss}
 There is a constant $C$ such that the following holds. Let $J$ be a positive integer  and $(a_j)_{j=0}^J$, $(b_j)_{j=1}^J$  be increasing sequences of positive real numbers.    For $1\le j\le J$
 let $\phi_{j} :\R^2\to \R$ be a continuous function satisfying  
\begin{equation}\label{phistickgauss}
|\phi_{j}(u_1,u_2)| \leq (b_j)^{-2}(1+b_j^{-1}|(u_1, u_{2})|)^{-4} .  
\end{equation}
Let $K$ be a kernel such that
    \begin{equation}
        \label{ker2}
    \widehat{K}(\xi,\eta,\tau) =    \sum_{j=1}^J \int_{{a_{j-1}}}^{{a_{j}}}  t^2(\xi+\eta)^2 g(t\xi) g(t\eta)  \,\frac{dt}{t} \,
    \widehat{\phi_{j}}(\tau,-\xi-\eta-\tau).
    \end{equation}
    
    Then estimate \eqref{lambdabd1} holds for any tuple as in \eqref{twosides}, \eqref{normal2}.

\end{prop}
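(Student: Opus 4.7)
The plan is to adapt the strategy of Proposition~\ref{secstickphi}: use the vanishing of $\widehat{K}$ on the critical hyperplane $\xi+\eta=0$ to perform a lacunary decomposition in $\xi+\eta$, then apply Cauchy--Schwarz to convert the eight-function form on the datum $D_2$ into a standard cubical singular Brascamp--Lieb form to which Theorem~1.1 of \cite{MR4510172} can be applied. The characteristic feature here, reflecting the ``continuous'' flavor of \ref{secstickgauss}, is that the scale variable $t$ lives in a continuum over $[a_{j-1}, a_j]$, so the discrete telescoping used in the proof of \ref{secstickphi} will be replaced by a partial integration in $t$.

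Concretely, expanding $(\xi+\eta)^2 = (\xi^2+\eta^2) + 2\xi\eta$ and exploiting the identity $\frac{d}{dt}[g(t\xi)g(t\eta)] = -2\pi t(\xi^2+\eta^2)g(t\xi)g(t\eta)$ splits the inner $t$-integral as
\[\int_{a_{j-1}}^{a_j} t(\xi+\eta)^2 g(t\xi) g(t\eta)\, dt = -\tfrac{1}{2\pi}\bigl[g(a_j\xi)g(a_j\eta) - g(a_{j-1}\xi)g(a_{j-1}\eta)\bigr] + \tfrac{1}{2\pi^2}\int_{a_{j-1}}^{a_j} h(t\xi)h(t\eta)\,\tfrac{dt}{t},\]
where the cross term $2\xi\eta\,g(t\xi)g(t\eta)$ was rewritten via $xg(x)=-h(x)/(2\pi)$. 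The boundary term, once paired with $\widehat{\phi_j}$ and summed in $j$, becomes a discrete telescoping that after Abel summation matches the structure of the kernel in Proposition~\ref{secstickphi}, with $\widehat{\phi_{j+1}}-\widehat{\phi_j}$ taking the role of the analogous discrete difference there. The bulk term inherits extra vanishing from $h$ at the origin, which provides a lacunary localization in each of $\xi$ and $\eta$ separately, not just in $\xi+\eta$.

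For each of the two resulting parts I would then perform a Littlewood--Paley decomposition in $\xi+\eta$ and run the Cauchy--Schwarz / cubical reduction argument in parallel with the proof of \ref{secstickphi}, using the symmetry \eqref{twosides} to keep the Cauchy--Schwarz loss-free and then invoking Theorem~1.1 of \cite{MR4510172} to handle the resulting cubical Brascamp--Lieb at a single Whitney scale. The pointwise decay \eqref{phistickgauss} on $\phi_j$ ensures that its pairing contributes only a bounded multiplicative factor at each lacunary scale. The main obstacle will be the bulk term: one must design the Littlewood--Paley decomposition so that the continuous $t$-integration matches geometrically with both the lacunary scale of $\xi+\eta$ and the natural Gaussian scales $\sim 1/t$, yielding summability in $J$ and in the lacunary parameter without logarithmic loss. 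The fact that $h(t\xi)h(t\eta)$ is sharply concentrated at $t\sim 1/|(\xi,\eta)|$ should make this matching feasible.
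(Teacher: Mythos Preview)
Your splitting $(\xi+\eta)^2=(\xi^2+\eta^2)+2\xi\eta$ followed by integration by parts destroys precisely the structure that makes the proposition tractable: the vanishing of $\widehat{K}$ on the hyperplane $\xi+\eta=0$. Neither the boundary term $g(a_j\xi)g(a_j\eta)-g(a_{j-1}\xi)g(a_{j-1}\eta)$ nor the bulk term $h(t\xi)h(t\eta)$ vanishes on $\xi+\eta=0$. Your claim that the boundary term after Abel summation ``matches the structure of the kernel in Proposition~\ref{secstickphi}'' is not correct: in Proposition~\ref{secstickphi} the telescoping difference $(\widehat{\chi_{(c_{j-1})}})^2-(\widehat{\chi_{(c_j)}})^2$ lives in the $\xi+\eta$ variable and vanishes at the origin, whereas your $\widehat{\phi_{j+1}}-\widehat{\phi_j}$ sits in the $(\tau,-\xi-\eta-\tau)$ slot and the $\phi_j$ are arbitrary functions satisfying only the decay~\eqref{phistickgauss}, with no cancellation in their differences. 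Each Abel term therefore contributes $O(1)$ to the form, and summing $J-1$ of them gives $O(J)$, not $O(1)$.

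The paper instead keeps the $(\xi+\eta)^2$ factor intact by the algebraic identity
\[
t^2(\xi+\eta)^2 g(t\xi)g(t\eta)=-\widehat{h}\big(2^{-3/2}t(\xi+\eta)\big)^2\,\widehat{\rho}\big(2^{-3/2}t(\xi,\eta)\big),
\]
with $\widehat{\rho}$ a positive Gaussian bump. This exhibits the vanishing directly as an $h$-factor in the $p,q$ variables on the spatial side, so the Cauchy--Schwarz step runs exactly as in Proposition~\ref{secstickphi}. After dominating $|\rho_{(t)}|$ and $|\phi_j|$ by Gaussians one arrives at a cubical form $\Lambda_{D_{-I},K_1}$ with $K_1$ built from $(h*h)_{(t)}\otimes(g*g)\otimes(g*g)$. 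This form is \emph{not} handled by Theorem~1.1 of \cite{MR4510172}---the multiplier is not Calder\'on--Zygmund uniformly in the sequences---but by a three-fold telescoping $K_1+K_2+K_3=\pi(\sigma_0-\sigma_J)$ together with the observation that each $\Lambda_{D_{-I},K_i}$ is nonnegative, so all three are bounded once their sum is.
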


We remark on a symmetry in the datum $D_2$. We do a change of variables in the kernel using the linear map 
\[L(a,b,c)= (a+b-c, a-b,c).\] 
Define
\[\widetilde{\Pi}(x):=L\circ \Pi(x)=
(x_2^{0}+x_2^{1}-x_3^{0}-x_3^1-2(x_0+x_1), x_2^0-x_2^1, x_3^1-x_3^0).\]
Define $\widetilde{D}_2$ from $D_2$ by replacing $\Pi$ by $\widetilde{\Pi}$ and choose $\widetilde{K}$ so that $\widetilde{K}\circ L=K$. We obtain 
$$\Lambda_{D_2,K}((f_s)_{s\in S})=\Lambda_{\widetilde{D}_2,\widetilde{K}}(({f}_s)_{s\in S}).$$
The map $\widetilde{\Pi}$ has a symmetry under interchanging the 
last two entries at the same time as precomposing with the involution
$$(x_0,x_1,x_2^{0},x_3^{0},x_2^{1},x_3^{1})\mapsto (x_0,x_1,-x_3^{0},-x_2^{0},-x_3^{1},-x_2^{1})$$
This involution can be seen as acting on the tuple of functions $f_s$,
and hence we have the following consequence for the associated form.
Define $\widetilde{K}^*(a,b,c)=\widetilde{K}(a,c,b)$. For $j\in \mathcal{C}$, define $j^*\in \mathcal{C}$ by $j^*(l)=j(1-l)$ and define
$f^*_{(k,j)}(a,b,c)=f_{(k,j^*)}(a,-c,-b)$. Then
\begin{equation}\label{hiddensymm}
\Lambda_{\widetilde{D}_2,\widetilde{K}}(({f}_s)_{s\in S})
=\Lambda_{\widetilde{D}_2,\widetilde{K}^*}(({f}_s^*)_{s\in S}).
\end{equation}

We finally introduce a further datum $D_A$, which is 
  associated with a   regular $3\times 3$ matrix  $A$ and has $n=6$.   Let  $S$ be the set of functions $S:\{0,1,2\} \to \{0,1\}$.  We put coordinates  $x=(x_1^0,x_2^0,x_{3}^0,x_{1}^1,x_{2}^1,x_{3}^1)$ on $\R^6$. 
We define
\begin{equation} \label{datumDA}
    D_A:=(6,S, \Pi,(\Pi_s)_{s\in S}),
\end{equation}
 where 
the projection 
$\Pi:\R^6\to \R^3$ is given by 
\begin{equation}
    \label{piT}
    \Pi(x)^T = (I,A) x^T,
\end{equation}
 where  $I$ is the $3\times 3$ identity matrix and $(I,A)$ is a $3 \times 6$ block matrix. For $s\in S$,   $\Pi_s: \R^6 \to \R^3$ is given by
\[\Pi_{s}(x)= (x_1^{s(0)}, x_2^{s(1)}, x_3^{s(2)}). \]
Note that after the relabelling of the coordinates, this datum has the same components as the datum $D_2$ except for the choice of the projection $\Pi$. We have used transposes in \eqref{piT} as we usually write vectors as rows while  the matrix equation  \eqref{piT} expects columns.
The   datum $D_A$  will 
  be used   the proofs of Propositions \ref{firstcz}, \ref{secstickphi}, and \ref{secstickgauss}. In the latter two cases we will only use it with $A=-I$.

We conclude this section with the previously announced  existence result.

\begin{lemma}\label{cnwindow} 
There exists a $c>0$ and a $c$-pair as defined near \eqref{windowsum}. 
\end{lemma}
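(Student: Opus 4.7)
The plan is to construct a $c$-pair directly via a smooth trigonometric parameterization. Rather than defining $\widehat{\phi_1}$ implicitly from $\widehat{\phi_0}$ by a square root (which would introduce regularity issues at points where $\widehat{\phi_0}=1$), I would introduce an auxiliary smooth angular function $\theta$ and set
\begin{equation*}
\widehat{\phi_0}(\xi) := \cos \theta(\xi), \qquad \widehat{\phi_1}(\xi) := 1 - \sin \theta(\xi).
\end{equation*}
The Pythagorean identity then gives $(\widehat{\phi_0})^2 + (1-\widehat{\phi_1})^2 = \cos^2\theta + \sin^2\theta = 1$ identically, which is exactly \eqref{windowsum}.

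First I would fix a smooth even function $\theta:\R\to [0,\pi/2]$ that vanishes on $[-1/2,1/2]$ and equals $\pi/2$ outside $[-1,1]$. Such a $\theta$ can be obtained by composing a standard smooth monotone transition (e.g., built from the $C^\infty$-function $\exp(-1/t)\mathbbm{1}_{t>0}$) with a dilation, followed by a symmetrization in $\xi\mapsto -\xi$; the outcome is that $\theta$ is fixed, universal, and has all derivatives bounded. With this $\theta$ chosen, $\widehat{\phi_0}$ and $\widehat{\phi_1}$ as defined above are automatically smooth, even, map $\R$ to $[0,1]$, vanish off $[-1,1]$, and take the value $1$ on $[-1/2,1/2]$ (since at such $\xi$ one has $\theta(\xi)=0$, so $\cos\theta=1$ and $1-\sin\theta=1$). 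Because these Fourier transforms are $C^\infty$ of compact support, their inverse Fourier transforms $\phi_0,\phi_1$ are Schwartz, and the evenness plus realness of $\widehat{\phi_0},\widehat{\phi_1}$ forces $\phi_0,\phi_1$ to be real-valued and even.

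It remains to verify the derivative bound on $\widehat{\phi_0}$ and $\widehat{\phi_1}$ up to order $N+30$. This follows from the Faà di Bruno formula: any derivative of $\cos\theta$ or $\sin\theta$ of order $\leq N+30$ is a finite polynomial in derivatives of $\theta$ up to that order, with coefficients depending only on $N$. Since $\theta$ is a fixed universal smooth function, each of the finitely many quantities $\|\theta^{(k)}\|_\infty$, $0\le k\le N+30$, is bounded, so there is a finite constant $c_0$ with $\|\widehat{\phi_0}^{(N+30)}\|_\infty,\|\widehat{\phi_1}^{(N+30)}\|_\infty\le c_0$. Taking $c:=c_0$ delivers a $c$-pair and completes the proof.

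Since the construction is entirely elementary, I do not anticipate any serious obstacle; the only point requiring a bit of care is to avoid the naive definition $\widehat{\phi_1}=1-\sqrt{1-(\widehat{\phi_0})^2}$, which destroys smoothness on the boundary of the plateau where $\widehat{\phi_0}=1$. The $\theta$-parameterization bypasses this issue cleanly because $\cos$ and $\sin$ are entire functions of $\theta$, so no square root singularities appear.
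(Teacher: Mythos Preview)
Your construction is correct and in fact cleaner than the paper's. The paper builds two auxiliary smooth cutoffs $\psi$ and $\rho$ on $[0,\infty)$, sets $(\widehat{\phi_0})^2=(4-\rho^2)\psi^2$ and $(1-\widehat{\phi_1})^2=1-(4-\rho^2)\psi^2$, and then has to argue carefully that both right-hand sides stay bounded away from zero on the relevant transition regions so that smooth square roots exist. Your trigonometric parameterization $\widehat{\phi_0}=\cos\theta$, $\widehat{\phi_1}=1-\sin\theta$ sidesteps this entirely: the Pythagorean identity delivers \eqref{windowsum} for free, and smoothness is immediate because $\cos$ and $\sin$ are entire. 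Both approaches produce a universal smooth pair with the required support, plateau, and derivative properties, but yours is shorter and avoids the square-root bookkeeping that the paper carries out.
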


\begin{proof} 

Let $\psi :[0,\infty) \to \R$ be a smooth monotone decreasing function  with
\[\psi(x)=1/2 \ \ {\rm for}\ \  x\in [0, 5/6],\] 
\[\psi(x)=0 \ \ {\rm for}\ \  x\in [1, \infty).\]
Let $\rho:[0,\infty) \to \R$ be a smooth monotone increasing function with
\[\rho(x)=0 \ \ {\rm for}\ \  x\in [0, 1/2],\] 
\[\rho(x)=3^{1/2} \ \ {\rm for}\ \  x\in [4/6, \infty).\]
There exists a smooth even function $\phi_0$ on $\R$
such that its Fourier transform is nonnegative and satisfies on $[0,\infty)$
\begin{equation*}
(\wh{\phi_0})^2= (4-\rho^2)\psi^2,\end{equation*}
because the right-hand side equals $\psi^2$ on $[4/6,\infty)$
and is bounded below by $1/4$ on $[0,5/6]$ and constant one 
on $[0,1/2)$.
There exists a smooth even function $\phi_1$ on $\R$ such that its Fourier transform is nonnegative and fulfills on the interval $[0,\infty)$
\begin{equation*}
(1-\wh{\phi_1})^2=1-(4 -\rho^2)\psi^2,\end{equation*}
because the right-hand side equals $\rho^2/4$ on $[0,5/6]$
and is bounded below by $3/4$ on $[4/6,\infty)$ and constant
on $[0,1/2]$.
The pair $(\phi_0,\phi_1)$ then satisfies the assumptions 
for a $c$-pair with
$$c=\max(\|{\wh{\phi_0}}^{(N+30)}\|_\infty,  \|{\wh{\phi_1}}^{(N+30)}\|_\infty).$$
This proves the lemma.
\end{proof}

We write $A\lesssim B$ if there exists a constant $C>0$ such that
$|A|\le C|B|$ uniformly over all values of parameters appearing in the expressions $A$ and $B$.

\section{Proof of Theorem \ref{thm:2} from Proposition~\ref{prop:1} and Corollaries~\ref{cor:1} and~\ref{cor:2}}
\label{proofthm:2}

This section follows the corresponding argument in \cite{MR3904183} for two commuting transformations
with minor modifications.
We summarize and streamline the argument.

Let $J$ be given, without loss of generality we may assume $J>2$. Let also positive real numbers $t_0<t_1\cdots < t_J$ be given.  
Let  $f_0,f_1,f_2$ be real valued measurable functions on $\R^3$, normalized as 
\[\|f_0\|_{4} =  \|f_1\|_{8}=\|f_2\|_{8}=1. \]
We will prove a weak-type  endpoint estimate at $r=4$, namely
for any $f_0\in L^4(\R^3)$  and $f_1,f_2\in {L}^8(\mathbb{R}^3)$ with respective norms one,
\begin{equation}\label{eq:doublenormvar}
\sum_{j=1}^{J} \|M_{t_j}(f_0,f_1,f_2) - M_{t_{j-1}}(f_0,f_1,f_2)\|_{2}^2 \lesssim J^{\frac 12}.
\end{equation}

We call  \eqref{eq:doublenormvar} an endpoint estimate as it would follow from the hypothetical
inequality \eqref{e:main2} with $r=4$ by the Cauchy-Schwarz inequality, and conversely
\eqref{eq:doublenormvar} implies \eqref{e:main2} for parameters $r>4$.
Namely, \eqref{eq:doublenormvar} allows by Chebyshev's inequality to estimate the number of 
$\lambda$-jumps of the norm by $O(\lambda^{-4})$, which then allows to deduce \eqref{e:main2} by a layer cake representation of the $r$-variation. 
Theorem \ref{thm:2} will thus follow as soon as we prove \eqref{eq:doublenormvar}.

We decompose the characteristic function $\mathbbm{1}_{[0,1)}$ into smoother functions.
Let $\chi$ be a left window and define  
\[{\theta}:={\chi}-{\chi}_{(2)}.\]
Then $\widehat{\theta}$ is supported in $[-1,-2^{-2}]\cup [2^{-2},1]$ and, as detailed
in \cite[Section 2.4]{MR3904183},
\[
\mathbbm{1}_{[0,1)} = \mathbbm{1}_{[0,1)}\ast\chi + 
\sum_{k=-\infty}^{-1} \mathbbm{1}_{[0,\infty)}\ast\theta_{(2^k)}
-\sum_{k=-\infty}^{-1} \mathbbm{1}_{[1,\infty)}\ast\theta_{(2^k)}
\]
\begin{equation}\label{eq:seriesexpansion}
=: \varphi+ \sum_{k=-\infty}^{-1} \varphi_{0,k}+ \sum_{k=-\infty}^{-1}\varphi_{1,k}.
\end{equation}

For  $\vartheta\in L^1(\R)$  we define in analogy with \eqref{eq:averages} for $x \in \R^3$
\begin{equation*}
M_{t}^\vartheta(f_0,f_1,f_2)(x) :=  \int_{\R} f_0(x+ue_0) f_1(x+ue_1) f_2(x+ue_2)\, \vartheta_{(t)}(u)\,du.
\end{equation*}

Using \eqref{eq:seriesexpansion} and  the triangle inequality on the sum in $k$,
it suffices to show in place of
\eqref{e:main2} for every $k\le -1$, 
\begin{equation}\label{mainbump} \sum_{j=1}^{J} \|M^{\varphi}_{t_j}(f_0,f_1,f_2) - M^{\varphi}_{t_{j-1}}(f_0,f_1,f_2)\|_{2}^2  \lesssim J^{\frac 12} ,\end{equation}
\begin{equation}\label{leftbump}
\sum_{j=1}^{J} \| M^{\varphi_{0,k}}_{t_j}(f_0,f_1,f_2) - M^{\varphi_{0,k}}_{t_{j-1}}(f_0,f_1,f_2)\|_{2}^2  \lesssim 2^{2k} J^{\frac 12}, \end{equation}
\begin{equation}\label{leftbump1}
\sum_{j=1}^{J} \| M^{\varphi_{1,k}}_{t_j}(f_0,f_1,f_2) - M^{\varphi_{1,k}}_{t_{j-1}}(f_0,f_1,f_2)\|_{2}^2  \lesssim 2^{\gamma k} J^{\frac 12}, \end{equation}
where $\gamma =\frac{1}{2}$. In fact, it will follow from our argument that inequality~\eqref{leftbump1} continues to hold with any $\gamma<1$, at the expense of allowing the constant in that inequality to depend on $\gamma$.
The estimate \eqref{mainbump} is acceptable and the estimates \eqref{leftbump} and
\eqref{leftbump1} give a geometric series over $k\leq -1$ and are thus acceptable as well.

We first prove \eqref{mainbump}. 
We reduce further \eqref{mainbump} to the analogous estimate but with the bump function $\varphi$ replaced by one whose Fourier transform is constant near the origin. 
We write
\begin{equation*}
\varphi = \chi + (\varphi - \chi) = \chi + \sum_{l=-2}^\infty (\varphi-\chi)\ast\theta_{(2^l)}
=: \chi + \sum_{l=-2}^\infty \varphi_{2,l}.
\end{equation*}

It then suffices to show
  \begin{equation}
      \label{long1}
      \sum_{j=1}^{J} \|M^{\chi}_{t_j}(f_0,f_1,f_2) - M^{\chi}_{t_{j-1}}(f_0,f_1,f_2)\|_{2}^2  \lesssim J^{\frac 12},
  \end{equation}
  \begin{equation}
      \label{long2}
      \sum_{j=1}^{J} \|M^{{\varphi_{2,l}}}_{t_j}(f_0,f_1,f_2) - M^{{\varphi_{2,l}}}_{t_{j-1}}(f_0,f_1,f_2)\|_{2}^2  \lesssim
      2^{-2l}J^{\frac 12}.
  \end{equation}

 We first prove \eqref{long1}. We split into long and short variation as in \cite{MR2434308}.
 Enlarging the sequence $t_j$ 
if necessary while at most doubling the number of terms and retaining at least a quarter of the left-hand side of \eqref{long1}, we may assume that for each $t_j$ there is a $t_i$ which is an integer  power of two with $t_i\le t_j<2t_i$.
Let $(k_i)_{i=0}^I$ be the increasing sequence of all $k_i$ such that 
the power $2^{k_i}$ occurs in the sequence $(t_j)_{j=1}^J$. We have $I\le J$.
It then suffices to show the short and long variation bounds 
\begin{equation}\label{mainbumpshort} \sum_{i=0}^I\,\sum_{j:\, 2^{k_i}< t_j\le 2^{k_{i}+1}} \|M^{\chi}_{t_{j}}(f_0,f_1,f_2) - M^{\chi}_{t_{j-1}}(f_0,f_1,f_2)\|_{2}^2  \lesssim J^{\frac 12} ,\end{equation}
\begin{equation}\label{mainbumplong} \sum_{i=1}^{I} \|M^{\chi}_{2^{k_i}}(f_0,f_1,f_2) - M^{\chi}_{2^{k_{i-1}}}(f_0,f_1,f_2)\|_{2}^2  \lesssim J^{\frac 12} .\end{equation}

We first discuss the short variation \eqref{mainbumpshort}. 
 We denote $T\chi(s):= (s\chi(s))'$, so that   
 \[(T\chi)_{(t)}(s)=-t\partial_t (\chi_{(t)}(s)),\]
and  we will use $T$ throughout the section.
By the fundamental theorem of calculus and the Cauchy-Schwarz inequality, we have for
 $x\in \R^3$
and every $1\leq i \leq I$, 
\[  \sum_{j:\,2^{k_i}< t_j\le 2^{k_i+1}} |M^{\chi}_{t_j}(f_0,f_1,f_2)(x)-M^{\chi}_{t_{j-1}}(f_0,f_1,f_2)(x)|^2 \leq   \int_{1}^2 \big( M_{2^{k_i}t}^{T\chi}(f_0,f_1,f_2)(x)\big)^2 \,\frac{dt}{t}  . \]
It then suffices to show
\begin{equation*}
      \sum_{i=0}^I\int_{\mathbb{R}^3} \int_{1}^2 \big( M_{2^{k_i}t}^{T\chi}(f_0,f_1,f_2)(x) \big)^2\, \frac{dt}{t} dx \lesssim J^{\frac 12}.
\end{equation*}
Expanding the square and moving the integral in $t$ outside,   the left-hand side   becomes
\begin{equation}\label{expandlhs}
\int_1^2\sum_{i=0}^I  \int_{\mathbb{R}^5} \Big[\prod_{n=0}^2 f_n(x+ue_n) 
f_n(x+ve_n) \Big] (T\chi)_{(2^{k_i}t)} (u) (T\chi)_{(2^{k_i}t)} (v) 
  \,dx du dv  \frac{dt}{t}.
\end{equation}

The expression \eqref{expandlhs} takes the form  
\begin{equation}
    \label{longvarcomplsq}
    \int_1^2 \Lambda_{D_1,{K_t}}((f_s)_{s\in S})\, \frac{dt}{t},
\end{equation}
where for $s=(k,j)\in \{0,1,2\}\times \{0,1\}$  and $y=(y_0,y_1,y_2)$ we have set
\begin{equation}\label{E:f}
{f}_{s}(y)=f_k(y-(y_0+y_1+y_2-y_k)e_k )
\end{equation}
and 
\begin{equation}\label{ktuv}
K_t(u,v) := \sum_{i=0}^I (T\chi)_{(2^{k_i}t)}(u) (T\chi)_{(2^{k_i}t)}(v).
\end{equation}
Indeed, writing $x=(x_0,x_1,x_2)$ and changing variables
\begin{equation}\label{E:change1}
u=x_3^0-x_0-x_1-x_2
\end{equation}
\begin{equation}\label{E:change2}
v=x_3^1-x_0-x_1-x_2,
\end{equation}
we obtain with the projections $\Pi_s$ of the datum $D_1$, 
\[{f}_{(k,0)}(\Pi_{(k,0)}(x,x_3^0, x_3^1))=f_k(x+ue_k)\]
\[{f}_{(k,1)}(\Pi_{(k,1)}(x,x_3^0, x_3^1))=f_k(x+ve_k)\]

It suffices to prove bounds uniformly for fixed $t\in [1,2]$ on the integrand of \eqref{longvarcomplsq}.
For this we apply Corollary \ref{cor:1}  with 
the sequence $(k_i)_{i=0}^I$ and $\phi_i$ suitable multiples of $(T\chi)_{(2^{k_i}t)}$ and use
\begin{equation}\label{psisupp}
    {\rm supp}(\widehat{T\chi})\subset    [-1,-2^{-1}]\cup[2^{-1},1] ,
\end{equation}
\begin{equation}
    \label{psidecay}
    |T\chi (u)| \lesssim  (1+|u|)^{-20}.
\end{equation}
This proves \eqref{mainbumpshort}.

 Next, we prove  the long variation bound  \eqref{mainbumplong}.
Recalling the universal pair $(\chi,\phi)$, by the triangle inequality,
it suffices to show
\begin{equation}\label{mainbumplong1} \sum_{i=1}^{I} \|
M^{\chi}_{2^{k_{i-1}}}(f_0,f_1,f_2)-M^{\phi}_{2^{k_i}}(f_0,f_1,f_2) \|_{2}^2  \lesssim J^{\frac 12},\end{equation}
\begin{equation}\label{mainbumplong2} \sum_{i=1}^{I} \|M^{\chi}_{2^{k_i}}(f_0,f_1,f_2) - M^{\phi}_{2^{k_{i}}}(f_0,f_1,f_2)\|_{2}^2  \lesssim J^{\frac 12} .\end{equation}
We first prove \eqref{mainbumplong1}.
We expand out the square of the $L^2$ norm to reduce matters to estimating 
   \begin{equation}\label{longexpl}
   \sum_{i=1}^I  \int_{\mathbb{R}^5} \Big[\prod_{n=0}^2 f_n(x+ue_n) 
f_n(x+ve_n) \Big] 
({\chi}_{(2^{k_{i-1}})}-{\phi}_{(2^{k_{i}})})(u) ({\chi}_{(2^{k_{i-1}})}-{\phi}_{(2^{k_{i}})})(v)
 \,dx du dv. \end{equation} 
 Performing the same change of variables in the Brascamp-Lieb datum as in \eqref{expandlhs}, we rewrite it as
  \begin{equation}
      \label{firstdatum-reduction}
      \Lambda_{D_1,K}((f_s)_{s\in S})
  \end{equation}
with 
$$K(u,v)=\sum_{i=1}^I ({\chi}_{(2^{k_{i-1}})}-{\phi}_{(2^{k_{i}})})(u) ({\chi}_{(2^{k_{i-1}})}-{\phi}_{(2^{k_{i}})})(v).$$
We estimate this with Corollary \ref{cor:2} of Propositions \ref{prop:1} and \ref{prop:2}, using that $\widehat{\chi}$ is a left window
and $\widehat{\phi}$ is a right window, and after splitting the sum into
even and odd indices $j$ to assure spacing of the sequences $k_j$ and $l_j$.
This completes the discussion of \eqref{mainbumplong1}.  
Similarly, estimating \eqref{mainbumplong2} reduces to estimating
 a form \eqref{firstdatum-reduction} with kernel
$$K(u,v)=\sum_{i=1}^I ({\chi}_{(2^{k_{i}})}-{\phi}_{(2^{k_{i}})})(u) ({\chi}_{(2^{k_{i}})}-{\phi}_{(2^{k_{i}})})(v).$$
This is done with Corollary \ref{cor:1}.
 This completes the discussion of \eqref{mainbumplong2}
and thus the discussion of \eqref{long1}.

 Next, we consider the decaying lacunary pieces near the origin \eqref{long2}. We define
 \[\varphi_{3,l}(x):= 2^{l}(\varphi_{2,l})_{(2^{-l})}(x)\] 
 and we replace $t_j$ by $2^l t_j$, using that the sequence $t_j$ was arbitrary,
 to turn \eqref{long2} into
  \begin{equation}
      \label{long3}
      \sum_{j=1}^{J} \|M^{{\varphi_{3,l}}}_{t_j}(f_0,f_1,f_2) - M^{{\varphi_{3,l}}}_{t_{j-1}}(f_0,f_1,f_2)\|_{2}^2  \lesssim  J^{\frac 12}.
  \end{equation} 
 Analogously to our discussion of \eqref{long1}, we pass to short and long variation.
 The short variation we estimate analogously using in place of \eqref{psisupp}
 and \eqref{psidecay}  
 \begin{equation}\label{varphisupp}
    {\rm supp}(\widehat{T\varphi_{3,l}})\subset    [-1,-2^{-1}]\cup[2^{-1},1]
\end{equation}
\begin{equation}
    \label{varphidecay}
    |T\varphi_{3,l}(u)| \lesssim  (1+|u|)^{-20}, 
\end{equation}
which follows because  $\widehat{\varphi}-\widehat{\chi}$ vanishes at the origin.
This completes the estimate for the short variation.

The long variation we expand similarly as  \eqref{longexpl} above into
\begin{equation}
    \label{longvar-lambda}
    \Lambda(f_0,f_1,f_2):= 
 \sum_{i=1}^I  \int_{\mathbb{R}^5} \Big[\prod_{n=0}^2 f_n(x+ue_n) 
f_n(x+ve_n) \Big]  \end{equation}
\[\times({(\varphi_{3,l})}_{(2^{k_{i-1}})}-{(\varphi_{3,l})}_{(2^{k_{i}})})(u) ({(\varphi_{3,l})}_{(2^{k_{i-1}})}-{(\varphi_{3,l})}_{(2^{k_{i}})})(v)
 \,dx du dv. \] 
By the distributive law, \eqref{longvar-lambda} is the difference of the two terms of the form
\begin{equation}\label{twomterms}
\sum_{i=1}^I  \int_{\mathbb{R}^5} \Big[\prod_{n=0}^2 f_n(x+ue_n) 
f_n(x+ve_n) \Big]  {(\varphi_{3,l})}_{(2^{m_{i}})}(u) ({(\varphi_{3,l})}_{(2^{k_{i-1}})}-{(\varphi_{3,l})}_{(2^{k_{i}})})(v)
 \,dx du dv,
 \end{equation}
 with $m_i = k_i$ and with $m_i = k_{i-1}$, respectively.
We write \eqref{twomterms} as
\[\sum_{i=1}^I  \int_{\mathbb{R}^3} \Big[\int_{\R}\prod_{n=0}^2 f_n(x+ue_n) {(\varphi_{3,l})}_{(2^{m_{i}})}(u)\,du
  \Big] \]
\[ \times \Big[\int_{\R}\prod_{n=0}^2  
f_n(x+ve_n)  ({(\varphi_{3,l})}_{(2^{k_{i-1}})}-{(\varphi_{3,l})}_{(2^{k_{i}})})(v)\,dv\Big] 
 \,dx \]
and apply the Cauchy-Schwarz inequality in $x$ and in the summation. This gives
\[\Lambda(f_0,f_1,f_2)\leq \widetilde{\Lambda}(f_0,f_1,f_2)^{\frac 12}\Lambda(f_0,f_1,f_2)^{\frac 12}\]
with
\begin{equation}\label{tildelambda}
\widetilde{\Lambda}(f_0,f_1,f_2)=
 \end{equation}
 \[\Big[ \sum_{i=1}^I  \int_{\mathbb{R}^5} \Big[ \prod_{n=0}^2 f_n(x+ue_n) f_n(x+ve_n) \Big]  {(\varphi_{3,l})}_{(2^{m_{i}})}(u){(\varphi_{3,l})}_{(2^{m_{i}})}(v)\,dxdudv
   \Big]^{\frac 12}. \] 
By bootstrapping, it suffices to prove a bound on $\widetilde{\Lambda}(f_0,f_1,f_2)$
in place of $\Lambda(f_0,f_1,f_2)$.
This should be compared with the integrand in \eqref{expandlhs} for fixed $t$.
By the same change of variables as there, \eqref{tildelambda} equals $\Lambda_{D_1,K}((f_s)_{s\in S})$ with 
\begin{equation}
    \label{longvar-lambda-ker}
    K(u,v) = \sum_{i=1}^I {(\varphi_{3,l})}_{(2^{m_{i}})}(u){(\varphi_{3,l})}_{(2^{m_{i}})}(v). 
\end{equation}
Applying Corollary \ref{cor:1} of Proposition \ref{prop:1} yields a bound for this term and 
    finishes the proof of  \eqref{long2}.  The assumptions of Corollary \ref{cor:1} are satisfied, which can be verified similarly as inequalities \eqref{varphisupp} and \eqref{varphidecay}  observed earlier. 
This completes the proof of the estimate \eqref{mainbump}.

Now we prove \eqref{leftbump}.
We  write
\[\varphi_{0,k} = \mathbbm{1}_{(-\infty,0)}*\theta_{(2^k)} = 2^k  \widetilde{\theta}_{(2^k)} ,  \]
where   $\widetilde{\theta}:=\mathbbm{1}_{(-\infty,0)}*\theta$ is the primitive of $\theta$. It has high order decay since $\theta$ has integral zero. 
By rescaling, it suffices to show
    \[  \sum_{j=1}^{J} \|M^{\widetilde{\theta}}_{t_j}(f_0,f_1,f_2) - M^{\widetilde{\theta}}_{t_{j-1}}(f_0,f_1,f_2)\|_{2}^2  \lesssim  J^{\frac 12}.\]
This now follows in the same way as \eqref{long3}, using
\[\textup{supp}(\widehat{\widetilde{\theta}}) \subset [-1,-2^{-2}]\cup [2^{-2},1] \]
and high order  decay of $\widetilde{\theta}$.
This completes the proof of \eqref{leftbump}.

It remains to prove \eqref{leftbump1}.
Define \[\varphi_{4,k}(u):= 2^{k} \widetilde{\theta}(u-2^{-k}).\]
We have
\[\varphi_{1,k}(u)=(2^k\widetilde{\theta}(u-2^{-k}))_{(2^k)} =  (\varphi_{4,k})_{(2^k)}(u) .\]
By rescaling, it suffices to show
    \[ 2^{-\gamma k} \sum_{j=1}^{J} \|M^{\varphi_{4,k}}_{t_j}(f_0,f_1,f_2) - M^{\varphi_{4,k}}_{t_{j-1}}(f_0,f_1,f_2)\|_{2}^2  \lesssim  J^{\frac 12}.  \]
 
We split into long and short variation as in \eqref{long3}.
To estimate the short variation, we use the fundamental theorem of calculus and the Cauchy-Schwarz inequality, which bounds
 \begin{equation*}
 \sum_{i=0}^I\,\sum_{j:\, 2^{k_i}< t_j \leq 2^{k_{i}+1}} 2^{-\gamma k}\|M^{\varphi_{4,k}}_{t_{j}}(f_0,f_1,f_2) - M^{\varphi_{4,k}}_{t_{j-1}}(f_0,f_1,f_2)\|_{2}^2   \end{equation*}
\begin{equation}
 \label{shortvarlast}
 \lesssim   \Big[\Big[ \sum_{i=0}^I 2^{-(\gamma+1)k}\int_{\mathbb{R}^3} \int_{1}^2 \big( M_{2^{k_i}t}^{\varphi_{4,k}}(f_0,f_1,f_2)(x) \big)^2\, \frac{dt}{t} dx  \Big]
   \end{equation}
\[ \times \Big[ \sum_{i=0}^I 2^{(1-\gamma)k}\int_{\mathbb{R}^3} \int_{1}^2 \big( M_{2^{k_i}t}^{T\varphi_{4,k}}(f_0,f_1,f_2)(x) \big)^2 \frac{dt}{t} \,dx  \Big]\Big]^{\frac 12}.\]

We are going to estimate each factor in the square brackets as $\lesssim J^{\frac 12}$. 
We begin with the first factor, that we expand as
\begin{equation*} 
\sum_{i=0}^I  \int_{\mathbb{R}^5} \Big[\prod_{n=0}^2 f_n(x+ue_n) 
f_n(x+ve_n) \Big] 
\end{equation*}
\begin{equation*} 
\times\Big[2^{-(\gamma+1)k} \int_1^2  (\varphi_{4,k})_{(2^{k_i}t)} (u) (\varphi_{4,k})_{(2^{k_i}t)} (v)\, \frac{dt}{t} \Big] 
  \,dx du dv. 
\end{equation*}

Similarly as \eqref{longvarcomplsq} and \eqref{ktuv}, this takes the form
\[\Lambda_{D_1,K}((f_s)_{s\in S})\]
with 
\[K(u,v) =  \sum_{i=0}^I  \Big[ \int_1^2  2^{-(\gamma+1)k} (\varphi_{4,k})_{(t)} (u) (\varphi_{4,k})_{(t)} (v)  \,\frac{dt}{t}  \Big]_{(2^{k_i})} =: \sum_{i=0}^I \Phi_{(2^{k_i})}(u,v).\]
We apply Proposition \ref{prop:1} with $\lambda=2-\gamma>1$, using that $\Phi$ is symmetric and positive as a superposition of positive terms, and using 
\[\textup{supp}(\widehat{\Phi}) \subseteq ([-1,-2^{-3}]\cup [2^{-3},1])^2,\] 
\begin{equation}\label{centeredest}|\Phi(u,v)| \le 2^{-(\gamma+1)k}\int_1^2 |\varphi_{4,k}(t^{-1}u)\varphi_{4,k}(t^{-1}v)|\,{dt} 
\end{equation}
\[\leq 2^{(1-\gamma)k}\int_1^2|\widetilde{\theta}(t^{-1}(u-t2^{-k}))\widetilde{\theta}(t^{-1}(v-t2^{-k}))|\,dt\]
\[\lesssim  2^{(1-\gamma)k}\int_1^2
(1+|u+v-t2^{1-k}|)^{-10} (1+|u-v|)^{-10}
\,dt\]
\[\lesssim 2^{(2-\gamma)k}\int_{2^{1-k}}^{2^{2-k}}
(1+|u+v-t|)^{-10} (1+|u-v|)^{-10}
\,dt\]
\[\lesssim 2^{(2-\gamma)k}(1+2^k|u+v|)^{-10} (1+|u-v|)^{-10}.\]
Here we estimated the integral for
$|u+v|<2^{3-k}$ by the integral over $\R$
and for $|u+v|>2^{3-k}$ we estimated the integrand by
its supremum norm.
We used along the way decay estimates of   $\widetilde{\theta}$ 
it inherits from  the window $\chi$.

We turn to the second factor in \eqref{shortvarlast}.
We  proceed as above, in place of \eqref{centeredest} we compute 
\[2^{(1-\gamma)k} \Big|\int_1^2 t\partial_t((\varphi_{4,k})_{(t)}(u))t\partial_{t}((\varphi_{4,k})_{(t)}(v)) \,\frac{dt}{t} \Big|  \]
\[=  2^{(3-\gamma)k}\Big|\int_1^2 t\partial_t(t^{-1}\widetilde{\theta}(t^{-1}(u-t2^{-k})))t\partial_t(t^{-1}\widetilde{\theta}(t^{-1}(v-t2^{-k})))\, \frac{dt}{t} \Big |. \]
Applying Leibniz and chain rules, most terms will be analogous to the above. However,
when a derivative falls on $t2^{-k}$, we obtain a factor $2^{-k}$. The worst term
is the one where both derivatives fall on the $t2^{-k}$. Thus we get the estimate
\[\lesssim 2^{(1-\gamma)k} \int_1^2 (1+|u+v-t2^{1-k}|)^{-10} (1+|u-v|)^{-10}\,\frac{dt}{t}.\]
As above, this is estimated by
\[\lesssim 2^{(2-\gamma)k}(1+2^k|u+v|)^{-10} (1+|u-v|)^{-10}.\]

To treat the long variation, we proceed as for \eqref{longvar-lambda}, where after a bootstrapping estimate we are led to estimate, analogously to \eqref{longvar-lambda-ker}, $\Lambda_{D_1,K}((f_s)_{s\in S})$ with
\[K(u,v) = 2^{-\gamma k}\sum_{i=1}^I(\varphi_{4,k})_{(2^{m_i})}(u)(\varphi_{4,k})_{(2^{m_i})}(v).\]
Similarly as in \eqref{centeredest} we estimate
 \[2^{-\gamma k}|\varphi_{4,k}(u)\varphi_{4,k}(v)| 
 = 2^{(2-\gamma)k} |\widetilde{\theta}(u-2^{-k})\widetilde{\theta}(v-2^{-k})|\]
\[\lesssim  2^{(2-\gamma)k}(1+|u+v-2^{1-k}|)^{-10} (1+|u-v|)^{-10} \]
\[\lesssim 2^{(2-\gamma)k}(1+2^k|u+v|)^{-10} (1+|u-v|)^{-10}.\]
Applying   Proposition \ref{prop:1}
again completes the proof of \eqref{leftbump1}.

\section{Proof of Proposition \ref{prop:1}  using Propositions \ref{prop:2} and
\ref{firstcz}}
\label{sec:prop:1}
Let $\lambda=\frac{3}{2}$. 
Let $k \leq 0$, let $J$ be a positive integer and   $(k_j)_{j=1}^J$  a strictly  increasing sequence of integers.
By splitting into hundred subsequences, using the triangle inequality 
to separate these sequences, we may assume $k_j+100\le k_{j+1}$ for $1\le j<J$.

Let $\Phi_j$ for $1\le j\le J$ be as in Proposition \ref{prop:1}.
In particular, 
$\widehat{\Phi_j}(\xi,-\xi)$ is continuous and even in $\xi$ by the symmetry assumption on the kernel $\Phi_j$.
Furthermore, we claim that 
 $\widehat{\Phi}_j(\xi,-\xi)$ is positive for all $\xi\in \R$.
To see this, first apply Plancherel to the positivity assumption
\eqref{poskernel} in Proposition~\ref{prop:1} to conclude
 \[0\le \int_{\R^2} \widehat{f}(-\xi)\overline{\widehat{f}(\eta)}\widehat{\Phi_j}(\xi,\eta)\, d\xi d\eta\]
 for all Schwartz functions $f$.
 Now we see the claim by using  testing functions $\widehat{f}$ 
 which approximate the Dirac delta at $\xi$.

As $\|\Phi_j\|_1$ has a universal bound, 
for suitable universal constant $c$ we have
$$ \widehat{\Phi_j}(\xi,-\xi)\le c(\widehat{\phi_{0,j}}-\widehat{\phi_{1,j}})(\xi)^2$$
with even real functions $\phi_{0,j}$ and $\phi_{1,j}$,
such that ${(\phi_{0,j})_{2^{-k_j+25}}}$ is a left window and ${(\phi_{1,j})_{2^{-k_j-25}}}$
is a right window.
Moreover, there exists a real even function $\psi_j$ so that
\begin{equation}\label{existsqr}
\widehat{\psi_j}(\xi)^2:=2c (\widehat{\phi_{0,j}}-\widehat{\phi_{1,j}})(\xi)^2-\ \widehat{\Phi_j}(\xi,-\xi).
\end{equation}

Namely, outside the support of $\xi \mapsto \widehat{\Phi_j}(\xi,-\xi)$,
the function $\widehat{\psi_j}$ can be chosen to equal ${\sqrt{2c}}(\widehat{\phi_{0,j}}-\widehat{\phi_{1,j}})$,
while on a neighborhood of this support, the function on the right-hand side is at least $c$ and thus has square root.
The function $\widehat{(\psi_j)_{(2^{-k_j+25})}}$ has support in
$[-1,1]$. 
To understand derivative bounds for this function, let $F(\xi)=\wh{(\Phi_j)_{(2^{-k_j})}}(\xi,-\xi)$. Then we have, for $0\le a\le 8$,
\[
|F^{(a)}(\xi)|=
\Big |(-2\pi i)^a \int_{\R^2} (\Phi_j)_{(2^{-k_j})}(u,v) (u-v)^a e^{-2\pi i \xi  (u - v)} \,du dv \Big |\lesssim 2^{(\lambda-1) k},
\]
by~\eqref{E:phi-lambda}. Thus, 
\begin{equation}\label{psiderivatives}
|(\wh{(\psi_j)_{(2^{-k_j})}})^{(a)}| \lesssim 1,
\end{equation}
as one can see outside the support of $\wh{(\Phi_j)_{(2^{-k_j})}}$
from bounds for derivatives of the windows and on the support using a lower bound
on the right-hand side of \eqref{existsqr} and upper bounds on the derivative of the right-hand side of \eqref{existsqr}.

To show a bound on $\Lambda_{D_1,K}((f_s)_{s\in S})$ with
$K=\sum_{j=1}^J\Phi_j$, which is positive,
it suffices to show a bound on $\Lambda_{D_1,K_0}((f_s)_{s\in S})$
with
$$K_0=\sum_{j=1}^J \Phi_j + \psi_j\otimes {\psi_j}$$
because the form associated with the datum $D_1$ and the difference $K_0-K$ is positive as well.

By Proposition \ref{prop:2}, the form $\Lambda_{D_1,K_1}((f_s)_{s\in S})$
is bounded, where
\[{K_1} ={2c}\sum_{j=1}^J ({\phi_{0,j}}-{\phi_{1,j}})\otimes 
({\phi_{0,j}}-{\phi_{1,j}}).
\]
Hence it suffices to prove a bound on $\Lambda_{D_1,K_3}((f_s)_{s\in S})$, where
$K_3=K_0-K_1$.

This is done by an application of Proposition \ref{firstcz}.
Note that we have on the diagonal
$$\widehat{K_3}(\xi,-\xi)=
\sum_{j=1}^J 
 \widehat{\Phi_j}(\xi,-\xi) + \widehat{\psi_j}(\xi)^2 - {2c}
({\widehat{\phi_{0,j}}}-{\widehat{\phi_{1,j}}})(\xi)^2=0.$$
We verify the remaining assumptions of Proposition \ref{firstcz} for
\[\Psi_j:=\Phi_j + \psi_j\otimes {\psi_j}- {2c}
({\phi_{0,j}}-{\phi_{1,j}})\otimes ({\phi_{0,j}}-{\phi_{1,j}}).\]
We have
\[\textup{supp}(\widehat{\Phi_j}) \subseteq ([-2^{-k_j+20},-2^{-k_j-20}]\cup [2^{-k_j-20},2^{-k_j+20}])^2,\]
\[\textup{supp}((\widehat{\phi_{0,j}}-\widehat{\phi_{1,j}})\otimes (\widehat{\phi_{0,j}}-\widehat{\phi_{1,j}}) ) \subseteq ([-2^{-k_j+25},-2^{-k_j-26}]\cup [2^{-k_j-26},2^{-k_j+25}]{)^2},\]
\[\textup{supp}(\widehat{\psi_j}\otimes \widehat{\psi_j}) \subseteq ([-2^{-k_j+25},-2^{-k_j-26}]\cup [2^{-k_j-26},2^{-k_j+25}])^2.\]
Thus,
\[\textup{supp}(\widehat{\Psi_j}) \subseteq  \{(\xi,\eta)\in \R^2: 2^{-k_j-30} < |(\xi,\eta)| \leq 2^{-k_j+30}\}.\]
Note also that, using in particular \eqref{psiderivatives},
\[|(\Phi_j)_{(2^{-k_j})}(u,v)| \lesssim 2^{\lambda k}  (1+2^k|u+v|)^{-10} (1+|u-v|)^{-10}, \]
\[|((\phi_{0,j}- \phi_{1,j})\otimes (\phi_{0,j}- \phi_{1,j}))_{(2^{-k_j})}(u,v)| \lesssim   (1+|u+v|)^{-4} (1+|u-v|)^{-4}, \]
\[|(\psi_j\otimes \psi_j)_{(2^{-k_j})}(u,v)| \lesssim (1+|u+v|)^{-4} (1+|u-v|)^{-4}. \]
Hence,
\[|(\Psi_j)_{(2^{-k_j})}(u,v)| \lesssim  2^{\lambda k} (1+2^k|u+v|)^{-4} (1+|u-v|)^{-4}+ (1+|u+v|)^{-4} (1+|u-v|)^{-4}. \]
 The final claim now follows from Proposition \ref{firstcz}.

\section{Proof of Proposition \ref{prop:2} using Propositions \ref{firstcz}
and  \ref{firststick}}
 \label{sec:prop:2}

Let $J$ be a positive integer and   $(k_j)_{j=1}^J$ and $(l_j)_{j=1}^J$   two finite sequences of integers with $k_{j}+10< l_j$ for $1\le j\le J$ and
$l_j< k_{j+1}$ for $1\le j<J-1$. By splitting the sequence into 
subsequences of even and odd $j$ if necessary, we may assume without loss of generality that $l_j+10<k_{j+1}$ for each $1\le j<J$.
 Assume a tuple 
 $(f_s)_{s\in S}$ as in \eqref{topbottom} and \eqref{normal1} is given.

Assume we are given $\phi_{0,j}$ and $\phi_{1,j}$ for each $j$ such that ${(\phi_{0,j})_{(2^{-k_j})}}$
is a left window and ${(\phi_{1,j})_{(2^{-l_j})}}$
is a right window.
Pick corresponding functions   
$\psi_{0,j}$ and $\psi_{1,j}$ so that the rescaled functions give universal pairs, and hence 
\begin{equation}\label{firstphipsi}
(1-\widehat{{\phi}_{1,j}})^2+(\widehat{{\psi}_{0,j}})^2=1,
\end{equation}
\begin{equation}\label{secondphipsi}
(\widehat{{\phi}_{0,j}})^2+(1-\widehat{{\psi}_{1,j}})^2=1.
\end{equation}
Then
\begin{equation}\label{diagsumone}
(1-\widehat{\psi_{1,1}})^2+\sum_{j=1}^J (\widehat{\phi_{0,j}}-\widehat{\phi_{1,j}})^2+ \sum_{j=1}^{J-1} (\widehat{\psi_{0,j}}-\widehat{\psi_{1,j+1}})^2+(\widehat{\psi_{0,J}})^2
=1.
\end{equation}
 To see this, note that at every point  at most one of
the functions $\widehat{{\phi}_{0,j}}$, $\widehat{{\phi}_{1,j}}$, $1\le j\le J$
is neither $0$ nor $1$, and the functions $\widehat{{\psi}_{0,j}}$, $\widehat{{\psi}_{1,j}}$
are neither zero nor one precisely when the respective function $\widehat{{\phi}_{1,j}}$,  $\widehat{{\phi}_{0,j}}$ is not zero or one. Therefore, at any point at most one pair
$(\widehat{{\psi}_{0,j}}, \widehat{{\phi}_{1,j}})$ or $(\widehat{{\psi}_{1,j}}, \widehat{{\phi}_{0,j}})$ takes values other than zero and one, and we can apply 
\eqref{firstphipsi} or \eqref{secondphipsi} respectively.

 As $\Lambda_{D_1,K}((f_s)_{s\in S})$ in  Proposition \ref{prop:2} is positive,
 it suffices to estimate its sum 
 with another positive term, and thus it suffices to estimate
 $\Lambda_{D_1,{K_1}}((f_s)_{s\in S})$ with
 \[
\widehat{K_1}(\xi,\eta) =
(1-\widehat{\psi_{1,1}})(\xi) (1-\widehat{\psi_{1,1}})(\eta)+\sum_{j=1}^J (\widehat{\phi_{0,j}}-\widehat{\phi_{1,j}})(\xi)(\widehat{\phi_{0,j}}-\widehat{\phi_{1,j}})(\eta)\]
\[+ \sum_{j=1}^{J-1} (\widehat{\psi_{0,j}}-\widehat{\psi_{1,j+1}})(\xi)
(\widehat{\psi_{0,j}}-\widehat{\psi_{1,j+1}})(\eta)+
(\widehat{\psi_{0,J}})(\xi)(\widehat{\psi_{0,J}})(\eta).\]
This can be rewritten in a more compressed form  
\[\widehat{K_1}=\sum_{j=0}^{2J} (\widehat{\varphi_{0,j}}-\widehat{\varphi_{1,j}})\otimes (\widehat{\varphi_{0,j}}-\widehat{\varphi_{1,j}})\]
where $\widehat{\varphi_{0,0}}=1$, for $1\le j\le J$
\[\varphi_{0,2j-1}=\phi_{0,j},\]
\[\varphi_{0,2j}=\psi_{0,j},\]
\[\varphi_{1,2j-1}=\phi_{1,j},\]
\[\varphi_{1,2j-2}=\psi_{1,j},\]
and ${\varphi_{1,2J}}=0$. 
Define for $1\le j\le J$
\[m_{2j-2}=k_{j}\]
and for $1\le j\le J$
\[m_{2j-1}=l_j.\]
Observe that for each $1\le j\le 2J-1$
we have
$(\varphi_{0,j})_{(2^{-m_{j-1}})}$ is a left window and 
$(\varphi_{1,j})_{(2^{-m_j})}$ is a right window.

In order to apply Proposition \ref{firststick},
we introduce for $0\le j\le 2J$ the functions 
\[\varphi_{2,j}=(\varphi_{1,j})_{(2^{-4})}.\]
Observe that  $(\varphi_{2,j})_{2^{4-m_j}}$ is a  right window whenever $0 \leq j \leq 2J-1$.
We write for $\widehat{K_1}$
\begin{equation}\label{eq:e1}
-\sum_{j=0}^{2J}(\widehat{\varphi_{0,j}} -\widehat{\varphi_{2,j}})\otimes   \widehat{\varphi_{1,j}} 
+\widehat{\varphi_{1,j}}\otimes  (\widehat{\varphi_{0,j}} -\widehat{\varphi_{2,j}})  
\end{equation}
\begin{equation}\label{eq:e2}
-\sum_{j=0}^{2J}(\widehat{\varphi_{2,j}} -\widehat{\varphi_{1,j}})\otimes   \widehat{\varphi_{1,j}}
+\widehat{\varphi_{1,j}}\otimes  (\widehat{\varphi_{2,j}} -\widehat{\varphi_{1,j}})  
\end{equation}
\begin{equation}\label{eq:e3}
+\sum_{j=0}^{2J}\widehat{\varphi_{0,j}}\otimes   \widehat{\varphi_{0,j}}
-\widehat{\varphi_{1,j}}\otimes   \widehat{\varphi_{1,j}}.
\end{equation}

In \eqref{eq:e1}, the bound for the sum of these terms over $1\le j\le 2J-1$
follows from Proposition \ref{firststick}, applied to the
sequence $(m_j)_{j=0}^{2J-1}$ and the rescaled windows
$\varphi_{0,j}$, $\varphi_{1,j}$, $\varphi_{2,j}$
for $1\le j\le 2J-1$.
  The term for $j=2J$ in \eqref{eq:e1} vanishes.
To deal with the term for $j=0$ in \eqref{eq:e1}, we use $\widehat{\varphi_{0,0}}=1$ and rewrite this term as 
\[
-\widehat{\varphi_{1,0}}(\eta)  +\widehat{\varphi_{2,0}}(\xi)   \widehat{\varphi_{1,0}}(\eta) 
+\widehat{\varphi_{1,0}}(\xi) - \widehat{\varphi_{1,0}}(\xi) \widehat{\varphi_{2,0}}(\eta).  \]
Denoting by $f_k$, $k=0,1,2$, the functions defined via~\eqref{E:f} and using the change of variables as in~\eqref{E:change1} and~\eqref{E:change2},
we estimate
\[|\Lambda_{D_1, \varphi_{1,0}\otimes \delta}((f_s)_{s\in S})| = \Big| \int_{\R^4}  \Big[ \prod_{k=0}^2 f_k(x+ue_k)f_k(x) \Big] \varphi_{1,0}(u) \, dx du \Big| \leq \|\varphi_{1,0}\|_1  \lesssim 1\]
where for a fixed $u$ we used H\"older's inequality in $x$ and $\delta$ denotes the Dirac delta at the origin. 
Similarly, 
\[| \Lambda_{D_1, \varphi_{1,0}\otimes \varphi_{2,0}}((f_s)_{s\in S})| \]
\[= \Big| \int_{\R^5} \Big[ \prod_{k=0}^2 f_k(x+ue_k)f_k(x+ve_k) \Big] \varphi_{1,0}(u)\varphi_{2,0}(v) \,dxdudv\Big| \leq  \|\varphi_{1,0}\|_1  \|\varphi_{2,0}\|_1  \lesssim 1.\]
By symmetry, this bounds the form associated with the $j=0$ summand in \eqref{eq:e1}.

It remains to estimate the form associated with $K_2$ where $\widehat{K_2}$ 
is the sum of \eqref{eq:e2}, \eqref{eq:e3}. 
As $\widehat{K_1}$ is constant $1$ on the diagonal $\xi+\eta=0$
by \eqref{diagsumone} and 
the stick terms {\eqref{eq:e1}} vanish on this diagonal, the function  $\widehat{K_2}$
is still constant one on this diagonal.

We define $K_3$ by $\widehat{K_3}:=\widehat{K_2}-1$.
It suffices to prove bounds for the form associated with $K_3$, because
$K_2-K_3$ is the Dirac delta and 
\[|\Lambda_{D_1,K_2-K_3}((f_s)_{s\in S})|=\Big |\int_{\R^3} \prod_{k=0}^{2} f_k^2(x) \, dx \Big |\le 1,\]
where the functions $f_k$ are as in~\eqref{E:f}.
We rewrite $\widehat{K_3}$ as
\begin{equation}\label{eq:e2prime}
-\sum_{j=0}^{2J}(\widehat{\varphi_{2,j}} -\widehat{\varphi_{1,j}})\otimes   \widehat{\varphi_{1,j}}
+\widehat{\varphi_{1,j}}\otimes  (\widehat{\varphi_{2,j}} -\widehat{\varphi_{1,j}})  
\end{equation}
\begin{equation}\label{eq:e3prime}
+\sum_{j=1}^{2J}\widehat{\varphi_{0,j}}\otimes   \widehat{\varphi_{0,j}}
-\widehat{\varphi_{1,j-1}}\otimes   \widehat{\varphi_{1,j-1}},
\end{equation}
where we have reshuffled \eqref{eq:e3} and used $\widehat{\phi_{0,0}}=1$ and $\widehat{\phi_{1,2J}}=0$.
Bounds for the sum of \eqref{eq:e2prime} and \eqref{eq:e3prime} follow from Proposition  \ref{firstcz}. Indeed,  
for each $0\leq j \leq 2J$,  
\[\textup{supp}((\widehat{\varphi_{2,j}} -\widehat{\varphi_{1,j}})\otimes   \widehat{\varphi_{1,j}}) \subseteq ([-2^{-m_j+4},-2^{-m_j-1}]\cup [2^{-m_j-1},2^{-m_j+4}])\times [-2^{-m_j},2^{-m_j}].\]
By symmetry, the $j$-th summand in  \eqref{eq:e2prime} is supported in 
\[ \{(\xi,\eta)\in \R^2: 2^{-m_j-30} < |(\xi,\eta)| \leq 2^{-m_j+30}\} =: A.\]
The  $j$-th summand also satisfies 
 a bound by
 \[ |({\varphi_{2,j}} -{\varphi_{1,j}})\otimes   {\varphi_{1,j}}
+{\varphi_{1,j}}\otimes  ({\varphi_{2,j}} -{\varphi_{1,j}})|_{(2^{-m_j})} (u,v)   \lesssim    (1+|u+v|)^{-4} (1+|u-v|)^{-4} \]
 due to the functions being windows. 

Similarly,   for $0\leq j \leq 2J-1$ we have
 \[\textup{supp}(\widehat{\varphi_{0,j+1}} \otimes \widehat{\varphi_{0,j+1}}- \widehat{\varphi_{1,j}}\otimes   \widehat{\varphi_{1,j}}) \subseteq  [-2^{-m_j},2^{-m_j}]^2\setminus [-2^{-m_j-1},2^{-m_j-1}]^2 \subseteq A \]
and the decay
\[   |{\varphi_{0,j+1}} \otimes {\varphi_{0,j+1}}- {\varphi_{1,j}}\otimes   {\varphi_{1,j}}|_{(2^{-m_j})}(u,v) \lesssim   (1+|u+v|)^{-4} (1+|u-v|)^{-4}. \]
Thus, bounds for $\Lambda_{D_1, K_3}$ follow from  Proposition \ref{firstcz}.

\section{Proof of Proposition \ref{firstcz} using Lemma 3 in \texorpdfstring{\cite{MR4171366}}{}
}
\label{sec:firstcz}
Given a regular $3\times 3$ matrix $A$, let     $D_A$ be the datum defined in \eqref{datumDA}. 
We   recall the following lemma, which is a special instance of a more general result proved in~\cite{MR4171366}. 

\begin{lemma}[\cite{MR4171366}, Lemma 3]\label{L:lemmaDT}
For all $0<\varepsilon<1$, there exists a constant $C$ such that the following holds.

Let $A$ be a regular $3 \times 3$ matrix which differs from $-I$ by at most one row and   satisfies
\begin{equation}\label{E:assumption-A}
|\operatorname{det}A| >\varepsilon \quad \text{and} \quad
\|A\|_{HS} \leq \varepsilon^{-1},
\end{equation}
where $\|A\|_{HS}$ stands for the Hilbert-Schmidt norm of $A$. With $S$ as in the datum $D_A$, let $(f_s)_{s\in S}$ be a tuple of real-valued Schwartz functions such that $\|f_s\|_8=1$ for all $s\in S$. Let $i=1,2,3$ and let $K$ be the kernel satisfying
\begin{equation}\label{E:k-rewritten}
K(\Pi x)=
\int_0^\infty \int_{\R^3} (\partial_i\partial_{i+3} g)_{(t)}(x+((-Ap^T)^T,p))\,dp \frac{dt}{t}.
\end{equation} 
Then
\[
|\Lambda_{D_A,K}((f_s)_{s\in S})| \leq C. 
\]
\end{lemma}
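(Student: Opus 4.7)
The plan is to decompose the kernel $K$ into a lacunary family of pieces indexed by the angular distance to the critical diagonal $\xi+\eta=0$, estimate each lacunary piece by applying Lemma~\ref{L:lemmaDT}, and collapse the total sum via a geometric series made possible by the vanishing of $\widehat{K}$ on the diagonal.

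First, I split the sequence $(k_j)_{j=1}^J$ into $O(1)$ well-separated arithmetic subsequences via the triangle inequality, reducing to the case where the Fourier supports of the $\widehat{\Phi_j}$ are pairwise disjoint. The global vanishing hypothesis $\widehat{K}(\xi,-\xi)=0$ then forces each $\widehat{\Phi_j}$ to vanish on the critical diagonal inside its support. Next, I introduce a smooth partition $1=\sum_{l\ge 0}\beta_l$ in the frequency plane in which $\beta_l$ is supported in the two opposite cones of opening $\sim 2^{-l}$ about the diagonal, as in Figure~\ref{fig:cones}, and set $\widehat{\Phi_{j,l}}:=\beta_l\widehat{\Phi_j}$ together with $K_l:=\sum_{j=1}^J\Phi_{j,l}$. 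A Taylor expansion of $\widehat{\Phi_j}$ around its zero on the diagonal, combined with the physical-space decay in \eqref{E:phi-lambda3/2} and the Whitney support at scale $2^{-k_j}$, yields a pointwise gain of $2^{-l}$ in the size of $\Phi_{j,l}$ compared to $\Phi_j$.

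For each fixed $l$, I bound $|\Lambda_{D_1,K_l}((f_s)_{s\in S})|$ by reducing it to a form associated with the cubical datum $D_{A_l}$ introduced in \eqref{datumDA}. A single Cauchy--Schwarz step, applied in a variable adapted to the $L^4$ factor $f_{(2,j)}$ and exploiting the symmetry $f_{(k,0)}=f_{(k,1)}$ to duplicate this factor, delivers a tuple of eight functions all in $L^8$ with norm one. The shear matrix $A_l$ is dictated by the orientation of the $l$-th cone: it is a rank-one perturbation of $-I$ whose non-identity row has entries of size $\sim 2^l$, hence still fitting the hypotheses \eqref{E:assumption-A} with an $\varepsilon$ whose $l$-dependence can be absorbed into the subsequent geometric sum. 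I realize $K_l$ in the integrated Gaussian-derivative form \eqref{E:k-rewritten} by writing each $\widehat{\Phi_{j,l}}$ as a telescoping scale difference and assembling the $J$ discrete pieces into a smooth integral over the scale variable $t$.

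Lemma~\ref{L:lemmaDT} then produces $|\Lambda_{D_{A_l},\widetilde K_l}|\le C$; unraveling the Cauchy--Schwarz (which contributes a factor $J^{1/2}$ coming from pairing the $J$-indexed sum against itself) and applying the $2^{-l}$ gain from the diagonal vanishing yields $|\Lambda_{D_1,K_l}((f_s)_{s\in S})|\lesssim 2^{-l}J^{1/2}$. Summing the geometric series over $l\ge 0$ gives the claimed $J^{1/2}$ bound. The main obstacle will be the middle step: choosing the Cauchy--Schwarz that synchronizes the $L^8\times L^8\times L^4$ normalization of the $D_1$ tuple with the all-$L^8$ normalization required for $D_{A_l}$, simultaneously aligning $A_l$ with the $l$-th cone direction, and verifying that Lemma~\ref{L:lemmaDT} applies with constants uniform enough in $l$ for the geometric series to converge.
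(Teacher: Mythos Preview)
You have confused the statement. Lemma~\ref{L:lemmaDT} is not proved in this paper; it is quoted verbatim from \cite[Lemma~3]{MR4171366} and used as a black box. What you have sketched is instead a proof of Proposition~\ref{firstcz}, the statement in Section~\ref{sec:firstcz} that \emph{applies} Lemma~\ref{L:lemmaDT}.

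Viewed as a sketch for Proposition~\ref{firstcz}, your outline is close in spirit to the paper's argument (lacunary cone decomposition toward the diagonal, Cauchy--Schwarz to pass from the datum $D_1$ to a cubical datum $D_A$, then Lemma~\ref{L:lemmaDT}), but it has a genuine gap. You let the shear matrix $A_l$ have a row of size $\sim 2^l$, so $\|A_l\|_{HS}\sim 2^l$ and the parameter $\varepsilon$ in \eqref{E:assumption-A} must be taken $\sim 2^{-l}$. Lemma~\ref{L:lemmaDT} gives a constant $C=C(\varepsilon)$ with no stated quantitative control as $\varepsilon\to 0$, so your phrase ``can be absorbed into the subsequent geometric sum'' is unjustified. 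The paper avoids this entirely: it first extracts an explicit power $z^{(\lambda-1)^2/(2\lambda)}$ from the kernel using the diagonal vanishing together with the decay hypothesis \eqref{E:phi-lambda3/2} (see \eqref{E:wzt-pointwise}--\eqref{E:wzt-pointwise-2}), and only \emph{then} performs an anisotropic change of variables $V$ that normalizes all the scales so that the resulting matrix $A$ satisfies \eqref{E:assumption-A} with the fixed value $\varepsilon=5^{-1/2}$, independent of the cone parameter. Also, the $J^{1/2}$ factor in the paper does not arise from ``pairing the $J$-indexed sum against itself'' in Cauchy--Schwarz; rather, one Cauchy--Schwarz factor (the one carrying $f_2$) is bounded by $\lesssim z^{(\lambda-1)^2/(2\lambda)}J$ via $\int_M dt/t\lesssim J$, while the other factor (carrying $f_0,f_1$) is bounded \emph{uniformly in $J$} by Lemma~\ref{L:lemmaDT}, and the geometric mean gives $J^{1/2}$.
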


\begin{proof}[Proof of Proposition~\ref{firstcz}]
Let $\lambda=\frac{3}{2}$. 
Let $k\le 0$ be given. Let an integer $J \geq 1$ and a strictly increasing sequence $(k_j)_{j=1}^J$ of integers be given. Let $(\Phi_j)_{j=1}^J$ and $K$ be given as in the proposition.
Let $(f_s)_{s\in S}$ be given as in~\eqref{topbottom} and~\eqref{normal1}.
Set $f_{k}:=f_{(k,0)}=f_{(k,1)}$ for each $k=0,1,2$.

Let $\theta: \R \rightarrow \R$ be a function whose Fourier transform is supported in   $[-2,-1/2] \cup [1/2,2]$ and whose derivatives up to order $8$ are $\lesssim 1$. Assume further that \[\int_0^\infty \wh{\theta}(r\xi)\,\frac{dr}{r}=1\] for all $\xi \neq 0$.
We do the two parameter lacunary decomposition of $\widehat{K}$
in directions $\xi+\eta$ and $\xi-\eta$ and collect these pieces into  lacunary cones away from the line $\xi+\eta=0$ centered at the origin. In detail, we write
\begin{equation}\label{Kintos}
\wh{K}(\xi,\eta)=\int_0^\infty \wh{K^{(z)}} (\xi,\eta)\,\frac{dz}{z}
\end{equation}
with 
\begin{equation}\label{defKs}
\wh{K^{(z)}}(\xi,\eta)=\int_0^\infty 
\wh{K}(\xi,\eta) \wh{\theta}(t(\xi-\eta)) \wh{\theta}(z^{-1}t(\xi+\eta))
\,\frac{dt}{t}.
\end{equation}
We break the integral in \eqref{Kintos} into the integrals over the domains $(0,1)$ and $(1,\infty)$
and do the estimates for these integrals separately.  
We begin with the case $z\in (0,1)$. Here we do an estimate for each $z$ separately
and show for all $z< 1$ that
\begin{equation}\label{E:s-less-1}
|\Lambda_{D_1,K^{(z)}}((f_s)_{s\in S})| \lesssim z^{\frac{(\lambda-1)^2}{2\lambda}} J^{\frac{1}{2}},
\end{equation}
which is an integrable upper bound with respect to the measure $\frac{dz}{z}$. 
Fix $z\in (0,1)$.

Let $g$ be the one-dimensional Gaussian and let $h=g'$. Set $\wh{\omega}=(\wh{h})^{-1}\wh{\theta}$. The function $\wh{\omega}$ satisfies similar support and derivative estimates as $\wh{\theta}$ since $\wh{h}$ and its derivatives are essentially constant on the support of $\wh{\theta}$.
In addition, let $\wh{\phi}$ be a function
supported in the annulus $\frac{1}{16} \leq |(\xi,\eta)| \leq 16$
such that  its derivatives up to order $8$ are $\lesssim 1$ 
and  $\wh{\phi}(\xi,\eta) \wh{g}(\xi) \wh{g}(\eta)=1$ if $1/8 \leq |(\xi,\eta)| \leq 8$.
Then, for all $\xi,\eta\in \R$,
\begin{equation}\label{E:gauss}
\wh{\theta}(\xi-\eta) \wh{\theta}(z^{-1}(\xi+\eta))
=
\wh{\theta}(\xi-\eta)\wh{\omega} (z^{-1}(\xi+\eta))
 \wh{h}(z^{-1}(\xi+\eta))\wh{\phi}(\xi,\eta) \wh{g}(\xi) \wh{g}(\eta). 
\end{equation}
 Note that this equality holds since the left-hand side is supported in the set where 
 \begin{equation}\label{E:support}
 1/8 \leq  |(\xi,\eta)| \leq 8.
 \end{equation} 
 Indeed, on the support of the left-hand side of~\eqref{E:gauss} we have
 $|\xi+\eta| \leq 2z \leq 2$ and $1/2 \leq |\xi-\eta| \leq 2$.
This yields~\eqref{E:support}. 
 
For $z\in (0,1)$ and $t>0$ we define the function $w^{z,t}$ via
\begin{equation}\label{E:def-wst}
\wh{w^{z,t}}(\xi,\eta)= \wh{K}(t^{-1}(\xi,\eta))\wh{\phi}(\xi,\eta)\wh{\theta}(\xi-\eta) \wh{\omega} (z^{-1}(\xi+\eta)).
\end{equation}
Let $\Pi$ be the projection associated with the datum $D_1$. Using the Fourier inversion formula and equations \eqref{defKs}, \eqref{E:gauss} and~\eqref{E:def-wst}, we write ${K^{(z)}}(\Pi x)$ as
\begin{equation}\label{E:K_Pi}
\int_0^\infty \int_{\R^2}   \wh{w^{z,t}}(t(\xi,\eta))
\wh{h}(z^{-1}t(\xi+\eta))
\wh{g}(t\xi) \wh{g}(t\eta) 
e^{2\pi i( \xi (x_3^0-x_0-x_1-x_2)+\eta(x_3^1-x_0-x_1-x_2))}\,d\xi d\eta \frac{dt}{t}.
\end{equation}
Since the Fourier transform of $w^{z,t}$ is supported in the set where $\frac{1}{8} \leq |(\xi,\eta)| \leq 8$, we observe that $w^{z,t}$ vanishes unless $t$ is in the set
$$ M:= \bigcup_{j=1}^J [2^{k_j-33} , 2^{k_j+33}].$$
We may thus restrict the region of  $t$-integration in~\eqref{E:K_Pi} to $M$.
Further, we may interpret the inner integral in~\eqref{E:K_Pi} as the integral of the Fourier transform of the function 
\begin{equation*}
(y_0,y_1,y_2,y_3,y_4) \mapsto
\end{equation*}
\begin{equation*}w^{z,t}_{(t)}(y_0+x_0+x_1,y_1+x_0+x_1)h_{(z^{-1}t)}(y_2+x_2)
g_{(t)}(y_3+x_3^0) 
g_{(t)}(y_4+x_3^1)   
\end{equation*}
over the hyperplane
$$
\{(-\xi,-\eta,-\xi-\eta,\xi,\eta):~\xi \in \R, ~\eta \in\R\}.
$$
It is therefore up to universal multiplicative constant equal to the integral of the function itself over the orthogonal complement
$$
\{(p+q-r,q-r,r,p+q,q):~p, q, r\in \R\}.
$$
The form $\Lambda_{D_1,K^{(z)}}((f_s)_{s\in S})$ can then be rewritten as
\begin{equation}\label{E:form}
\int_M \int_{\R^8}  \Big[\prod_{s\in S} f_s(\Pi_s x)\Big] 
w^{z,t}_{(t)}(x_0+x_1+p+q-r, x_0+x_1+q-r) 
\end{equation}
\begin{equation*}
\times h_{(z^{-1}t)}(x_2+r) g_{(t)}(x_3^0+p+q)
g_{(t)}(x_3^1+q) 
\,dx dp dq dr \frac{dt}{t}.
\end{equation*}
 We write the integral in $x_2$ as the innermost and use the Cauchy-Schwarz inequality in the remaining variables. This bounds~\eqref{E:form} by the geometric mean of
\begin{equation}\label{E:second-term}
\int_{M} \int_{\R^7} \Big[\prod_{i=0,1} |f_{2}(x_0,x_1,x_3^i)|^2 \Big ]  
|w^{z,t}_{(t)}(x_0+x_1+p+q-r, x_0+x_1+q-r)| 
\end{equation}
\begin{equation*}
\times g_{(t)}(x_3^0+p+q)
g_{(t)}(x_3^1+q) 
 \,dx_0 dx_1 dx_3^0 dx_3^1 dp dq dr \frac{dt}{t}
\end{equation*}
and
\begin{equation*}
\int_{0}^\infty \int_{\R^7} 
\Big[
\int_{\R} \Big [ \prod_{i=0,1}f_0(x_3^i,x_1,x_2)f_1(x_0,x_3^i,x_2) \Big ] 
h_{(z^{-1}t)}(x_2+r) \,dx_2\Big]^2 \end{equation*}
\begin{equation*}
\times |w^{z,t}_{(t)}(x_0+x_1+p+q-r, x_0+x_1+q-r)| 
\end{equation*}
\begin{equation}\label{E:first-term}
\times g_{(t)}(x_3^0+p+q)
g_{(t)}(x_3^1+q) 
\,dx_0 dx_1 dx_3^0 dx_3^1 dp dq dr \frac{dt}{t}.
\end{equation}

In order to bound~\eqref{E:second-term} and~\eqref{E:first-term}, we prove a pointwise estimate for $w^{z,t}$. We first claim
\begin{equation}\label{E:first-estimate}
|w^{z,t}(u,v)| 
\lesssim z^\lambda.
\end{equation}
To verify the claim, we observe that
since $\wh{K}$ vanishes on the diagonal $\xi+\eta=0$, the function $\wh{K_{(t^{-1})} \ast \phi}$ has the same property. 
Therefore
\begin{equation}\label{E:der-est}
    |\wh{K_{(t^{-1})} \ast \phi}(\xi,\eta)|
=|\wh{K_{(t^{-1})} \ast \phi}(\xi,\eta)-\wh{K_{(t^{-1})} \ast \phi}((\xi-\eta)/2,-(\xi-\eta)/2)|
\end{equation}
\begin{equation*}
=\Big|\int_{\R^2} K_{(t^{-1})} \ast \phi(u,v) e^{-\pi i(\xi-\eta)(u-v)} (e^{-\pi i(\xi+\eta)(u+v)}-1) \,du dv\Big|
\end{equation*}
\begin{equation*}
\lesssim \int_{\R^2} |K_{(t^{-1})} \ast \phi(u,v)| \min\{|\xi+\eta||u+v|,1\}\,du dv
\end{equation*}
\begin{equation*}
\leq |\xi+\eta|^{\lambda-1} \int_{\R^2} |K_{(t^{-1})} \ast \phi(u,v)| ~ |u+v|^{\lambda-1}\,du dv,
\end{equation*}
as $\lambda-1 \in (0,1)$.
We observe that 
\begin{equation}\label{E:kt}
|K_{(t^{-1})} \ast \phi(u,v)| \lesssim 2^{\lambda k}(1+2^k|u+v|)^{-4} (1+|u-v|)^{-4} +  (1+|u+v|)^{-4} (1+|u-v|)^{-4},
\end{equation}
thanks to the derivative estimates on $\phi$, to the support properties of $\wh{\phi}$ and $\wh{\Phi_j}$ and to~\eqref{E:phi-lambda3/2}.
Therefore,
\begin{equation*}
\int_{\R^2} |K_{(t^{-1})} \ast \phi(u,v)|~ |u+v|^{\lambda-1}\,du dv
\end{equation*}
\begin{equation*}
\lesssim 2^{k} \int_{\R^2} (1+2^k|u+v|)^{\lambda-5} (1+|u-v|)^{-4}\, du dv
+ \int_{\R^2} (1+|u+v|)^{\lambda-5} (1+|u-v|)^{-4}\, du dv
\lesssim 1.
\end{equation*}
 Combining this with~\eqref{E:der-est} and passing to $w^{z,t}$, we thus obtain
\[
|\wh{w^{z,t}}(\xi,\eta)|
\lesssim z^{\lambda-1}. 
\]
Estimating the Fourier inversion formula by $L^1 \rightarrow L^\infty$ bounds, inequality~\eqref{E:first-estimate} follows.

We note that the right-hand side of~\eqref{E:first-estimate} has the desired decay as $z$ tends to $0$, however, it does not have a good behavior with respect to $(u,v)$. We therefore derive a yet another estimate for $w^{z,t}$ in which the right-hand side possesses merely $L^1$ scaling in $z$ but decays sufficiently fast as $|(u,v)|$ tends to infinity. We set 
\[
F(u,v)=\omega_{(z^{-1})}((u+v)/2) \theta((u-v)/2).
\]
By~\eqref{E:def-wst}, we have
$w^{z,t}=K_{(t^{-1})} \ast \phi \ast F$. 
Recall that the functions $\wh{\omega}$ and $\wh{\theta}$ are supported in $[-2,2]$ and have derivatives up to order $8$ bounded by $\lesssim 1$. 
Using~\eqref{E:kt}, we therefore obtain
\begin{equation}\label{E:I}
|w^{z,t}(u,v)| \lesssim 2^{\lambda k}(1+2^k|u+v|)^{-4} (1+|u-v|)^{-4}
+z(1+z|u+v|)^{-4} (1+|u-v|)^{-4} \quad \text{if } 2^k \leq z
\end{equation}
and
\begin{equation}\label{E:II}
|w^{z,t}(u,v)| \lesssim z(1+z|u+v|)^{-4} (1+|u-v|)^{-4} \quad \text{if } z \leq 2^k.
\end{equation}

Finally, we write $|w^{z,t}|=|w^{z,t}|^{\frac{\lambda-1}{2\lambda}} |w^{z,t}|^{\frac{\lambda+1}{2\lambda}}$ and use the estimate~\eqref{E:first-estimate} for the first factor and the estimates~\eqref{E:I} and~\eqref{E:II} for the second factor. This yields the desired bounds
\begin{equation}\label{E:wzt-pointwise}
|w^{z,t}(u,v)| \lesssim z^{\frac{(\lambda-1)^2}{2\lambda}} [z(1+z|u+v|)^{-2-\frac{2}{\lambda}}+2^k(1+2^k|u+v|)^{-2-\frac{2}{\lambda}}] (1+|u-v|)^{-2-\frac{2}{\lambda}}  
\end{equation}
if  $2^k \leq z$, and
\begin{equation}\label{E:wzt-pointwise-2}
|w^{z,t}(u,v)| \lesssim z^{\frac{(\lambda-1)^2}{2\lambda}} z(1+z|u+v|)^{-2-\frac{2}{\lambda}} (1+|u-v|)^{-2-\frac{2}{\lambda}} \quad \text{if } z \leq 2^k.
\end{equation}

Having inequalities~\eqref{E:wzt-pointwise} and~\eqref{E:wzt-pointwise-2} at our disposal, we proceed to bound the term~\eqref{E:second-term}.
We observe that this term can be written as
\begin{equation}\label{E:second-rewritten}
\int_{M} \int_{\R^5}  \Big[\prod_{i=0,1} |f_{2}(x_0,x_1,x_3^i)|^2 \Big ]  
\end{equation}
\begin{equation*}
\times [|w^{z,t}| \ast (g \otimes g)]_{(t)}(x_3^0-x_0-x_1+r, x_3^1-x_0-x_1+r) 
\, dx_0 dx_1 dx_3^0 dx_3^1 dr \frac{dt}{t}.
\end{equation*}
Applying the  Cauchy-Schwarz inequality, we bound \eqref{E:second-rewritten}  with \[v_{z,t}:=[|w^{z,t}| \ast (g \otimes g)]_{(t)}\] by
\begin{equation*}
    \int_M \prod_{i=0,1}\Big[\int_{\R^5}|f_2(x_0,x_1,x_3^i)|^4     
v_{z,t}(x_3^0-x_0-x_1+r,x_3^1-x_0-x_1+r) \,  dx_0 dx_1 dx_3^0 dx_3^1 dr \Big]^{\frac 1 2}   \,\frac{dt}{t}.
\end{equation*}
The product of the square roots of the integrals for $i=0,1$ equals
\begin{equation*}
    \|f_2\|_4^4 \|v_{z,t}\|_1\lesssim z^{\frac{(\lambda-1)^2}{2\lambda}}.
\end{equation*}
The last identity can be seen by integrating first 
in $x_3^{1-i}$ and then in $r$ to obtain the $L^1$ norm of $v_{z,t}$. What remains is then the $L^4$ norm of $f_2$ raised to the fourth power.
Using that $\int_M \frac{dt}{t} \lesssim J$, we deduce that~\eqref{E:second-term} is bounded by a multiple of \[z^{\frac{(\lambda-1)^2}{2\lambda}}J.\]

We next focus on the term~\eqref{E:first-term}.
Using the estimates~\eqref{E:wzt-pointwise} and~\eqref{E:wzt-pointwise-2}, bounding the form~\eqref{E:first-term} reduces to estimating
\begin{equation*}
\int_{0}^\infty \int_{\R^7} 
\Big[
\int_{\R} \Big[  \prod_{i=0,1} f_0(x_3^i,x_1,x_2) f_1(x_0,x_3^i,x_2) \Big] h_{(z^{-1}t)}(x_2+r) \,dx_2 \Big]^2 
\end{equation*}
\begin{equation*}
\times t^{-1} \gamma (1+t^{-1}\gamma|x_0+x_1+p/2+q-r|)^{-2-\frac{2}{\lambda}}
t^{-1}(1+t^{-1}|p|)^{-2-\frac{2}{\lambda}}  
\end{equation*}
\begin{equation*}
\times g_{(t)}(x_3^0+p+q)
g_{(t)}(x_3^1+q) \,
dx_0 dx_1 dx_3^0 dx_3^1 dp dq dr \frac{dt}{t},
\end{equation*}
where $\gamma=z$, or $\gamma=2^{k}$ if $2^k \leq z$. We will prove a bound independent of $z$ and $k$, which will bound~\eqref{E:first-term} by $\lesssim z^{\frac{(\lambda-1)^2}{2\lambda}}$ thanks to the extra factor $z^{\frac{(\lambda-1)^2}{2\lambda}}$ in~\eqref{E:wzt-pointwise} and~\eqref{E:wzt-pointwise-2}.

We dominate
\begin{equation*}
t^{-1} \gamma (1+t^{-1}\gamma|x_0+x_1+p/2+q-r|)^{-2-\frac{2}{\lambda}}
t^{-1}(1+t^{-1}|p|)^{-2-\frac{2}{\lambda}}
\end{equation*}
\begin{equation*}
\lesssim t^{-2}\gamma (1+t^{-1}|(\gamma(x_0+x_1+p/2+q-r), 2p)|)^{-2-\frac{2}{\lambda}}
\end{equation*}
\begin{equation*}
\lesssim \int_2^\infty g_{(\alpha \gamma^{-1}t)}(x_0+x_1+p/2+q-r) g_{(\alpha t)}(2p) \,\frac{d\alpha}{\alpha^{1+\frac{2}{\lambda}}}.
\end{equation*}

It thus suffices to estimate the form
\begin{equation}
\label{E:first-term2}
\int_{0}^\infty \int_{\R^7} 
\Big[
\int_{\R} 
\Big[  \prod_{i=0,1} f_0(x_3^i,x_1,x_2) f_1(x_0,x_3^i,x_2) \Big] h_{(z^{-1}t)}(x_2+r) \,dx_2 \Big]^2 
\end{equation}
\begin{equation*}
\times g_{(\alpha \gamma^{-1}t)}(x_0+x_1+p/2+q-r) g_{(\alpha t)}(2p)
g_{(t)}(x_3^0+p+q)
g_{(t)}(x_3^1+q) \,
dx_0 dx_1 dx_3^0 dx_3^1 dp dq dr\frac{dt}{t}
\end{equation*}
with a bound $\lesssim \alpha$ and then integrate over $\alpha$, using that $2/\lambda>1$.
We claim that
\begin{equation}\label{E:convolution}
\int_{\R} g_{(\alpha \gamma^{-1}t)}(x_0+x_1+p/2+q-r) g_{(\alpha t)}(2p)
g_{(t)}(x_3^0+p+q) \,dp
\end{equation}
\begin{equation*}
\lesssim g_{(2^{1/2}\alpha \gamma^{-1}t)}(x_0+x_1+q-r) g_{(\alpha t)}(x_3^0+q). 
\end{equation*}
Indeed, we have
\[
g_{(\alpha t)}(2p)
\lesssim e^{-\pi \gamma^2 \alpha^{-2} t^{-2} (p/2)^2} g_{(2^{-1/2}\alpha t)}(p).
\]
The elementary inequality $e^{-2(a+b)^2} e^{-2b^2} \leq e^{-a^2}$ yields
\[
g_{(\alpha \gamma^{-1}t)}(x_0+x_1+p/2+q-r) e^{-\pi \gamma^2 \alpha^{-2} t^{-2} (p/2)^2}
\lesssim g_{(2^{1/2}\alpha \gamma^{-1}t)}(x_0+x_1+q-r).
\]
Thus, the left-hand side of~\eqref{E:convolution} is bounded by
\[
g_{(2^{1/2}\alpha \gamma^{-1}t)}(x_0+x_1+q-r) (g_{(2^{-1/2}\alpha t)} \ast g_{(t)})(x_3^0+q)
\lesssim g_{(2^{1/2}\alpha \gamma^{-1}t)}(x_0+x_1+q-r) g_{(\alpha t)}(x_3^0+q),
\]
as desired.

Expressing further $g_{(2^{1/2}\alpha \gamma^{-1}t)}(x_0+x_1+q-r)$ as a convolution of two Gaussians and using the evenness of the Gaussian, \eqref{E:first-term2} is bounded by
\begin{equation}\label{E:first-term4}
\alpha \int_{0}^\infty \int_{\R^7} 
\Big[
\int_{\R} 
\Big[  \prod_{i=0,1} f_0(x_3^i,x_1,x_2) f_1(x_0,x_3^i,x_2) \Big] h_{(z^{-1}t)}(x_2+r) \,dx_2 \Big]^2  
\end{equation}
\begin{equation*}
\times g_{(\alpha \gamma^{-1}t)}(x_0+p) g_{(\alpha \gamma^{-1}t)}(x_1-p+q-r)  
g_{(\alpha t)}(x_3^0+q)
g_{(\alpha t)}(x_3^1+q) \,
dx_0 dx_1 dx_3^0 dx_3^1 dp dq dr \frac{dt}{t}.
\end{equation*}

After renaming of variables, naming the variable $x_2$ that is twice an integration variable once as $x_2^0$ and once as $x_2^1$, then 
renaming the variables $x_0,x_1,x_2^0,x_2^1,x_3^0,x_3^1$ in this order as
$x_1^1,x_1^0,x_3^0,x_3^1,x_2^0,x_2^1$, and finally 
introducing functions $\widetilde{f}_0(a,b,c)=f_0(b,a,c)$ and $\widetilde{f}_1=f_1$, we write~\eqref{E:first-term4} as
\begin{equation*}
\alpha \int_{0}^\infty \int_{\R^7} 
\Big[ \prod_{i=0,1}\int_{\R}  \widetilde{f}_0(x_1^0, x_2^{0},x_3^i) \widetilde{f}_1(x_1^1,x_2^0,x_3^i) \widetilde{f}_0(x_1^0,x_2^1,x_3^i) \widetilde{f}_1(x_1^1,x_2^1,x_3^i)  h_{(z^{-1}t)}(x_3^i+r) \,dx_3^i \Big ]   \end{equation*}
\begin{equation*}
\times g_{(\alpha \gamma^{-1}t)}(x_1^0-p+q-r) g_{(\alpha \gamma^{-1}t)}(x_1^1+p) 
g_{(\alpha t)}(x_2^0+q)
g_{(\alpha t)}(x_2^1+q)\,
dx_1^0 dx_1^1 dx_2^0 dx_2^1 dp dq dr \frac{dt}{t}.
\end{equation*}
Let $S$ and $(\Pi_s)_{s\in S}$ be as in the datum $D_A$.
Introducing 
$f_{s}=\widetilde{f}_{s(1)}$ for $s \in S$,  we may write the last display as
\begin{equation*}
\alpha \int_{0}^\infty \int_{\R^9} \Big[ \prod_{s\in S} f_{s}(\Pi_{s} x) \Big] 
g_{(\alpha \gamma^{-1}t)}(x_1^0-p+q-r) g_{(\alpha \gamma^{-1}t)}(x_1^1+p)  
\end{equation*}
\begin{equation*}
\times g_{(\alpha t)}(x_2^0+q)
g_{(\alpha t)}(x_2^1+q) h_{(z^{-1}t)}(x_3^0+r) h_{(z^{-1}t)}(x_3^1+r)\,
dx dp dq dr \frac{dt}{t}.
\end{equation*}

Let $V: \R^3 \rightarrow \R^3$ be a mapping given by $V(v_0,v_1,v_2) = (\alpha \gamma^{-1}v_0, \alpha v_1,z^{-1}v_2)$. We perform the change of variables with respect to this mapping for each of the triples $(p,q,r)$, $(x_1^{0},x_2^{0},x_3^{0})$ and $(x_1^{1},x_2^{1},x_3^{1})$. After this transformation, the above form becomes
\begin{equation}\label{E:first-term5}
\alpha \int_{0}^\infty \int_{\R^9} \Big[\prod_{s\in S} \alpha^{\frac{1}{4}} \gamma^{-\frac{1}{8}} z^{-\frac{1}{8}} f_{s}(V \Pi_s x) \Big] 
g_{(t)}(x_1^0-p+\gamma q-\gamma z^{-1} \alpha^{-1}r) g_{(t)}(x_1^1+p) 
\end{equation}
\begin{equation*}
\times g_{(t)}(x_2^0+q)
g_{(t)}(x_2^1+q) h_{(t)}(x_3^0+r) h_{(t)}(x_3^1+r)\,
dx dp dq dr \frac{dt}{t}.
\end{equation*}
This can be recognized as $\alpha$ multiple of
\[
\Lambda_{D_A,K}(( \alpha^{\frac{1}{4}} \gamma^{-\frac{1}{8}} z^{-\frac{1}{8}} f_{s} \circ V)_{s\in S}),
\]
where $K$ has the form~\eqref{E:k-rewritten} with $i=3$ and 
$$
A=
\left ( \begin{array}{ccc}
1 & -\gamma & \gamma z^{-1} \alpha^{-1}\\
0 & -1 & 0 \\
0 & 0 & -1
\end{array} \right ).
$$
Since $0<\gamma \leq z \leq 1 \leq \alpha$, the matrix $A$ satisfies the assumption~\eqref{E:assumption-A} with $\varepsilon=5^{-1/2}$. 
 Observing further that the function $\alpha^{\frac{1}{4}} \gamma^{-\frac{1}{8}} z^{-\frac{1}{8}} f_{s} \circ V$ has the same $L^8$-norm as $f_{s}$, we deduce from Lemma~\ref{L:lemmaDT} that~\eqref{E:first-term5} is bounded by $\lesssim \alpha$.
This yields the desired bound for~\eqref{E:first-term}.

Combining the estimates for~\eqref{E:second-term} and~\eqref{E:first-term}, we obtain~\eqref{E:s-less-1}.

It remains to consider the part of the integral in~\eqref{Kintos} where $z\in (1,\infty)$. Let $\varphi$ be the function defined via its Fourier transform by \[\wh{\varphi}(\xi)=\int_1^\infty  \wh{\theta}(z\xi) \,\frac{dz}{z}.\] Then we can write
\begin{equation}\label{E:s=1}
\int_1^\infty \wh{K^{(z)}} (\xi,\eta)\,\frac{dz}{z}
=\int_0^\infty 
\wh{K}(\xi,\eta) \wh{\varphi}(t(\xi-\eta)) \wh{\theta}(t(\xi+\eta))\,
\frac{dt}{t}.
\end{equation}
Formally, this expression has the same form as~\eqref{defKs} when $z=1$, except that the function $\wh{\theta}$ is at one occurrence replaced by $\wh{\varphi}$. 
Due to this similarity, we will denote~\eqref{E:s=1} by $\wh{K^{(1)}}(\xi,\eta)$. Note that 
$\wh{\theta}$ is supported in $[-2,-1/2] \cup [1/2,2]$, $\wh{\varphi}$ is supported in $[-2,2]$ and the support properties of $\wh{\theta}$ and $\wh{\varphi}$ ensure that $\wh{\varphi}(\xi-\eta) \wh{\theta}(\xi+\eta)$ is supported in the set where $1/8 \leq |(\xi,\eta)| \leq 8$. We may therefore apply an argument analogous to the case $z\in (0,1)$, arriving at the estimate
\[
|\Lambda_{D_1,K^{(1)}}((f_s)_{s\in S})| \lesssim J^{\frac{1}{2}}.
\]
Combining this with~\eqref{E:s-less-1} yields the conclusion of the proposition.
\end{proof}

\section{Proof of Proposition \ref{firststick} using Propositions \ref{thm:4} and ~\ref{secstickphi}}
\label{sec:firststick}

Let $(k_j)_{j=0}^J$ be a finite increasing sequence of integers with $k_{j-1}+10 \leq k_{j}$. For $1\le j\le J$, let $\phi_{0,j}$, $\phi_{1,j},\phi_{2,j}$
be rescaled respective left or right windows as in the proposition,
and define $K$ as in \eqref{kerfirststick}.
Let  $(f_s)_{s\in S}$ be a tuple of functions as in
\eqref{topbottom} and \eqref{normal1}. 
Set $f_{k}:=f_{(k,0)}=f_{(k,1)}$ for each $k=0,1,2$.

Let  {$({\chi},\phi)$} be  a universal pair and define
$\chi_j:=\chi_{(2^{k_j-2})}$ and $\phi_j:=\phi_{(2^{k_j-2})}$. 
Define 
\begin{equation}
    \label{phi3}
    {\phi_{3,j}}:={\chi_{j-1}}-{\phi_j}
\end{equation}  
and consequently, 
\begin{equation*}
    (\widehat{\phi_{3,j}})^2 =(\widehat{\chi_{{j-1}}})^2- (\widehat{\chi_{{j}}})^2 .
\end{equation*}

If $(\xi,\eta)$ is in the support of $(\widehat{\phi_{0,j}} -\widehat{\phi_{2,j}})\otimes \widehat{\phi_{1,j}}$, then 
\[ 2^{-k_j+3}-2^{-k_j}\leq |\xi+\eta|\leq 2^{-k_{j-1}}+2^{-k_j}. \]
In this range, $\widehat{\chi_{{j-1}}}(\xi+\eta)$ is constant $1$ and
$\widehat{\chi_{{j}}}(\xi+\eta)$ is constant zero. 
We can therefore introduce  artificial factors $\widehat{\phi_{3,j}}$ in $\widehat{K}$ as follows,
\begin{equation*}
 \widehat{K}(\xi,\eta) = \sum_{j=1}^J (\widehat{\phi_{0,j}} -\widehat{\phi_{2,j}})(\xi)  \widehat{\phi_{1,j}}(\eta)  =  \sum_{j=1}^J (\widehat{\phi_{0,j}} -\widehat{\phi_{2,j}})(\xi)  \widehat{\phi_{1,j}}(\eta)\widehat{\phi_{3,j}}(-\xi-\eta). 
\end{equation*} 
Taking the Fourier transform, we obtain for some universal constant $C$,
$$K(u,v)= \int_{\R^2}\sum_{j=1}^J (\widehat{\phi_{0,j}} -\widehat{\phi_{2,j}})(\xi)e^{2\pi i u\xi}  \widehat{\phi_{1,j}}(\eta)e^{2\pi i v\eta}\widehat{\phi_{3,j}}(-\xi-\eta)\, d\xi d\eta$$
\begin{equation}\label{takeft1}=C  \sum_{j=1}^J \int_\R({\phi_{0,j}} -{\phi_{2,j}})(u+p)  {\phi_{1,j}}(v+p){\phi_{3,j}}(p) \,dp,
\end{equation}
where we used that the integral of a function in $\R^3$ over the diagonal
$\{(p,p,p): p\in \R\}$ equals the integral of its Fourier transform over the orthogonal complement of the diagonal, suitably normalized. 

Therefore, with $S$ and $\Pi_s$ as in the datum $D_1$, and doing a variable transformation $p\to x_2+p$,
\[\Lambda_{D_1,K}((f_s)_{s\in S}) =   \sum_{j=1}^J\int_{{\R^6}}  \Big[ \prod_{s\in S}f_s(\Pi_s x) \Big]  \] 
  \begin{equation}
      \label{formtobd}
  \times  ({\phi_{0,j}} -{\phi_{2,j}})(x_3^0-x_0-x_1+p)  {\phi_{1,j}}(x_3^1-x_0-x_1+p)\phi_{3,j} (x_2+p) \,dx dp. 
  \end{equation}

We  apply Fubini  in \eqref{formtobd} to have the integral in $x_2$ as the innermost and then apply  the  Cauchy-Schwarz inequality in $x_0,x_1,x_3^0,x_3^1, p$, which bounds $|\Lambda_{D_1,K}((f_s)_{s\in S})|$ 
up to a constant by  the geometric mean of 
   \begin{equation}
 \sum_{j=1}^J  \int_{\R^5}  \Big[\prod_{i=0,1} |f_2(x_0,x_1,x_3^i)|^2 \Big ]   \mu_j(x_3^0-x_0-x_1+p,x_3^1-x_0-x_1+p) \,  dx_0 dx_1 dx_3^0 dx_3^1 dp
      \label{trv}
 \end{equation}
 and
\[
 \sum_{j=1}^J \int_{\R^5}  \Big[ \int_{\R} \Big[  \prod_{i=0,1} f_0(x_3^i, x_1,x_2) f_1(x_0,x_3^i,x_2)  \Big]
  \phi_{3,j} (x_2+p)\, dx_2 \Big]^2       \]
 \begin{equation}
     \label{positive}
    \times \mu_j(x_3^0-x_0-x_1+p,x_3^1-x_0-x_1+p) dx_0 dx_1 dx_3^0 dx_3^1    dp ,
 \end{equation}
where we have introduced the weight $\mu_j$ defined by
$$\mu_j(u,v)=| {\phi_{0,j}} -{\phi_{2,j}}|(u)  |\phi_{1,j}|(v).$$
We will estimate  \eqref{trv} as $\lesssim J$ and \eqref{positive} as $\lesssim 1$, thereby proving
Proposition \ref{firststick}.

We begin with  \eqref{trv}. Applying the  Cauchy-Schwarz inequality in the remaining integration variables,  we bound \eqref{trv}  by
\begin{equation*}
    \sum_{j=1}^J  \prod_{i=0,1} \Big[ \int_{\R^5}|f_2(x_0,x_1,x_3^i)|^4     
\mu_j(x_3^0-x_0-x_1+p,x_3^1-x_0-x_1+p)  \, dx_0 dx_1 dx_3^0 dx_3^1 dp \Big]^{\frac 12}   
\end{equation*}
\[= \sum_{j=1}^{J}\|f_2\|_4^4 \|\mu_j\|_1\lesssim J.\]
Here the identity is seen by integrating first 
in $x_3^{1-i}$ then in $p$ to obtain the $L^1$ norm of $\mu_j$.

It remains to estimate \eqref{positive}.
We use decay of $\mu_j$ thanks to control of derivatives of Fourier transform of windows and the superposition estimate
\begin{equation*}
    (1+|(u,v)|)^{-N-20} \lesssim \int_1^\infty g_{(\alpha)}(u)
    g_{(\alpha)}(v)
    \, \frac{d\alpha}{\alpha^{N+10}},
\end{equation*}
which we scale isotropically and unisotropically, to dominate
$$\mu_j(u,v)\lesssim
\int_{1}^\infty g_{(\alpha 2^{k_j})}(u) g_{(\alpha 2^{k_j})}(v) 
\, \frac{ d\alpha}{\alpha^{N+10}}
 +\int_1^\infty g_{(\alpha 2^{k_{j-1}})}(u) g_{(\alpha 2^{k_{j}})}(v) 
\,  \frac{ d\alpha}{\alpha^{N+10}}.
 $$
By superposition of positive terms, it suffices to estimate as
$\lesssim \alpha^{N}$ the variant of \eqref{positive} with  $\mu_j(u,v)$ replaced by
$$g_{(\alpha2^{l_j})}(u)g_{(\alpha2^{k_{j}})}(v)$$
and for each of the sequences $l_j=k_j$ and $l_j=k_{j-1}$. 
Define the sequence of real numbers $(m_j)_{j=1}^J$ by
$$2^{2m_j}+2^{2m_j}=2^{2k_j}+ 2^{2l_j}.$$
Note that $k_j-1\le m_j\le k_j$, because $l_j\le k_j$.
Adding and subtracting terms, it suffices to estimate as
$\lesssim \alpha^{N}$ the variants of \eqref{positive} 
with  $\mu_j(u,v)$ replaced by
\begin{equation}\label{symmw}
    g_{(\alpha 2^{m_j})}(u)g_{(\alpha 2^{m_j})}(v)
\end{equation}
and by $(\nu_j)_{(\alpha)}(u,v)$, where
\begin{equation}\label{diffw}
\nu_j(u,v):= 
g_{( 2^{l_j})}(u)g_{( 2^{k_j})}(v)-
 g_{( 2^{m_j})}(u)g_{( 2^{m_j})}(v).
\end{equation}

We begin with  \eqref{symmw}. We need to estimate
\[
 \sum_{j=1}^J \int_{\R^5}  \Big[ \int_{\R} \Big[ \prod_{i=0,1}f_0(x_3^i, x_1,x_2) f_1(x_0,x_3^i,x_2)\Big ]   \phi_{3,j} (x_2+p) \,dx_2 \Big]^2       \]
 \begin{equation}
     \label{positive2}
 \times g_{(\alpha 2^{m_j})}(x_3^0-x_0-x_1+p)g_{(\alpha 2^{m_j})}(x_3^1-x_0-x_1+p) \,  dx_0 dx_1 dx_3^0 dx_3^1   dp .
 \end{equation}
A renaming of variables, naming the variable $x_2$ that is twice an integration variable once as $x_2^0$ and once as $x_2^1$, then 
renaming the variables $x_0,x_1,x_2^0,x_2^1,x_3^0,x_3^1$ in this order as
$x_1,x_0,x_3^0,x_3^1,x_2^0,x_2^1$, and finally 
introducing functions $\widetilde{f}_0(a,b,c)=f_0(b,a,c)$ and $\widetilde{f}_1=f_1$, we write \eqref{positive2} as
\[
 \sum_{j=1}^J \int_{\R^5}  \Big[ \prod_{i=0}^1\int_{\R}  \widetilde{f}_0(x_0, x_2^{0},x_3^i) \widetilde{f}_1(x_1,x_2^0,x_3^i) \widetilde{f}_0(x_0,x_2^1,x_3^i) \widetilde{f}_1(x_1,x_2^1,x_3^i)  \phi_{3,j} (x_3^i+p)\, dx_3^i \Big]      \]
 \begin{equation}
     \label{positive3}
 \times g_{(\alpha 2^{m_j})}(x_2^0-x_0-x_1+p)g_{(\alpha 2^{m_j})}(x_2^1-x_0-x_1+p) \, dx_0 dx_1 dx_2^0 dx_2^1   dp.
 \end{equation}
Introducing for the datum $D_2$ the tuple 
$f_{(k,j)}=\widetilde{f}_k$ for $k=0,1$ and $j\in \mathcal{C}$, we may write \eqref{positive3} as
$\Lambda_{D_2,K_1}((f_s)_{s\in S})$, with
$$K_1(u,v,z)=
\sum_{j=1}^J \int_{\R}g_{(\alpha 2^{m_j})}(u+p) g_{(\alpha 2^{m_j})}(v+p)
 {\phi}_{3,j}(z+p){\phi}_{3,j}(p)  \, dp .$$
Proposition \ref{thm:4} implies $\Lambda_{D_2,K_1}((f_s)_{s\in S})\lesssim \alpha^{N}$.

It remains to estimate the term with \eqref{diffw}. We may assume $l_j=k_{j-1}$, because \eqref{diffw} vanishes in the case $k_j=l_j$. With similar 
transformations as for term  \eqref{symmw}, we write the form associated with \eqref{diffw} as
$\Lambda_{D_2,K_2}((f_s)_{s\in S})$ with 
\[K_2(u,v,z)=
\sum_{j=1}^J
\int_{\R}(\nu_j)_{(\alpha)}(u+p,v+p)
 {\phi}_{3,j}(z+p){\phi}_{3,j}(p)  \, dp .\]
 We decompose $\nu_j=\sum_{n\in \Z} \nu_{j,n}$, where
 $$\widehat{\nu_{j,0}}(\xi,\eta)= \widehat{\nu_j}(\xi,\eta) 
 ((\widehat{\chi_{(2^{k_{j-1}})}})^2-(\widehat{\chi_{(2^{k_{j}})}})^2)(\xi+\eta),$$
 and for $n<0$,
 \begin{equation}
     \label{nujn-neq}
     \widehat{\nu_{j,n}}(\xi,\eta)= \widehat{\nu_j}(\xi,\eta) 
 ((\widehat{\chi_{(2^{k_{j-1}+n})}})^2-(\widehat{\chi_{(2^{k_{j-1}+n+1})}})^2)(\xi+\eta)
 \end{equation}
 and for $n>0$,
  \begin{equation}
     \label{nujn-pos}
     \widehat{\nu_{j,n}}(\xi,\eta)= \widehat{\nu_j}(\xi,\eta) 
 ((\widehat{\chi_{(2^{k_{j}+n-1})}})^2-(\widehat{\chi_{(2^{k_{j}+n})}})^2)(\xi+\eta).
 \end{equation}
We split $K_2=\sum_{n\in \Z} K_{2,n}$
accordingly and estimate for each $n$
\[\Lambda_{D_2,K_{2,n}}((f_s)_{s\in S})\lesssim 2^{-|n|}.\]
Upon summing over $n$, we obtain the desired bound for \eqref{diffw}.

We begin with $n=0$.
We have, similarly as in \eqref{takeft1}, for some universal constant $C$,
\[K_2(u,v,z)=C\sum_{j=1}^J \int_{\R^3} \widehat{(\nu_j)_{(\alpha)}}(\xi,\eta)e^{2\pi i (u\xi+v\eta)}\widehat{\phi_{3,j}}(\tau)e^{2\pi i z\tau}\widehat{\phi_{3,j}}(-\tau-\xi-\eta)\, d\xi d\eta d\tau
\]
and thus
\[\widehat{K_{2,0}}(\xi,\eta,\tau)=C\sum_{j=1}^J ((\widehat{\chi_{(\alpha 2^{k_{j-1}})}})^2-(\widehat{\chi_{(\alpha 2^{k_{j}})}})^2)(\xi+\eta)\widehat{(\nu_j)_{(\alpha)}}(\xi,\eta)\widehat{\phi_{3,j}}(\tau)\widehat{\phi_{3,j}}(-\tau-\xi-\eta).
\]
Preparing to apply Proposition \ref{secstickphi}, we note that $K_{2,0}$ is of the form \eqref{ker1space}
with $\rho_j$ defined by
\[{\rho_j}(u_1,u_2,u_3,u_4):=
{(\nu_j)_{(\alpha)}}(u_1,u_2){\phi_{3,j}}(u_3){\phi_{3,j}}(u_4),\]
as can be seen from the Fourier transform side \eqref{ker1}.
We do not attempt to show that $\rho_j$ itself satisfies the assumptions of 
Proposition \ref{secstickphi}, but we split into 
eight
pieces by the distributive law, 
splitting $\nu_j$ into 
two 
pieces as in its definition \eqref{diffw} and each $\phi_{3,j}$ into two
as in its definition \eqref{phi3}. A typical piece is 
\[g_{(\alpha 2^{l_j})}(u_1) g_{(\alpha 2^{k_j})}(u_2)
\chi_{j-1}(u_3)\phi_j(u_4),\]
which satisfies the assumptions of Proposition \ref{secstickphi}, because
\[\int_{\R^2} g_{(\alpha 2^{l_j})}(u_1+p) g_{(\alpha 2^{k_j})}(u_2+p)
\chi_{j-1}(u_3+r)\phi_j(u_4+r)\, dpdr\]
\[\lesssim (g*g)_{(\alpha 2^{1+k_j})}(u_1-u_2) 
2^{-k_j}(1+2^{-k_j}|u_3-u_4|)^{-2}.\]
This along with similar estimates for the other seven pieces completes the bound for $\Lambda_{D_2,K_{2,0}}((f_s)_{s\in S})$ by Proposition \ref{secstickphi}.

We turn to $n>0$. We introduce  artificial factors 
that are constant $1$ where relevant, using that the sequence $k_j$ is well separated, and write
\[\widehat{K_{2,n}}(\xi,\eta,\tau) = C\sum_{j=1}^J ((\widehat{\chi_{(\alpha 2^{k_{j-1}+n+1})}})^2-(\widehat{\chi_{(\alpha 2^{k_{j}+n+1})}})^2)(\xi+\eta)  \]
\[\times 
((\widehat{\chi_{(\alpha 2^{k_{j}+n-1})}})^2-(\widehat{\chi_{(\alpha 2^{k_{j}+n})}})^2)(\xi+\eta)
\widehat{(\nu_j)_{(\alpha)}}(\xi,\eta)\widehat{\phi_{3,j}}(\tau)\widehat{\phi_{3,j}}(-\tau-\xi-\eta) .
\]
This kernel is of the form \eqref{ker1space} with
$$\widehat{\rho_j}(\xi,\eta,\tau, \sigma)=
\widehat{(\nu_{j,n})_{(\alpha)}}(\xi,\eta)\widehat{\phi_{3,j}}(\tau)\widehat{\phi_{3,j}}(\sigma),
$$
with $\nu_{j,n}$  defined in \eqref{nujn-pos}.
We break both functions $\widehat{\phi_{3,j}}$ into pieces as above.
All pieces are done similarly, we discuss a typical piece of $\rho_j$ given by
$$\widehat{\varrho_j}(\xi,\eta,\tau, \sigma)=
\widehat{(\nu_{j,n})_{(\alpha)}}(\xi,\eta)\widehat{\chi_{j-1}}(\tau)\widehat{\phi_{j}}(\sigma) .
$$
Using that $\chi_j$ and $\phi_j$ are even, we have
\[ \int_{\R} {\chi_{j-1}}(u_3+r){\phi_{j}}(u_4+r)\, dr=(\chi_{j-1}*\phi_j)(u_3-u_4)\lesssim 2^{-k_{j}}(1+2^{-k_{j}}|u_3-u_4|)^{-2}.
\]
With Lemma \ref{lemmadecay} below, we obtain
\[\int_{\R^2}|\varrho_j|(u_1+p,u_2+p,u_3+r,u_4+r)\, dpdr\]
\[\lesssim 2^{-n} \alpha^{-1} 2^{- k_{j}}(1+\alpha^{-1} 2^{- k_{j}} |u_1-u_2|)^{-2}
2^{-k_{j}}(1+2^{-k_{j}}|u_3-u_4|)^{-2}.\]
Proposition \ref{secstickphi} gives $\Lambda_{D_2,K_{2,n}}((f_s)_{s\in S})\lesssim 2^{-n}$, as desired.

\begin{lemma}\label{lemmadecay}
We have for every $1\le j\le J$ and every $x,y\in \R$ the estimate 
\[|\nu_{j,n}(x,y)|\lesssim 2^{-n} 2^{- k_j}(1+2^{- k_j}|x-y|)^{-4}2^{-n- k_{j}}(1+2^{-n- k_{j}} |x+y|)^{-4}. \]
\end{lemma}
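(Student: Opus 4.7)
Write $a=2^{l_j}$, $b=2^{k_j}$, $c=2^{m_j}$ and $\gamma=(b^2-a^2)/2$, so that $2c^2=a^2+b^2$, $a\le b/2^{10}$, $c\in[b/\sqrt{2},b]$, and crucially $c^4-\gamma^2=a^2b^2$. The plan is to compute $\nu_{j,n}$ essentially by hand on the Fourier side, exploiting the vanishing of $\widehat{\nu_j}$ on the anti-diagonal $\xi+\eta=0$ combined with the frequency localization $|\xi+\eta|\sim 2^{-k_j-n}$ provided by $\widehat{\psi_n}$, where $\widehat{\psi_n}(\sigma):=(\widehat{\chi}(2^{k_j+n-1}\sigma))^2-(\widehat{\chi}(2^{k_j+n}\sigma))^2$ is the window appearing in \eqref{nujn-pos}.

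A direct computation yields $\widehat{\nu_j}(\xi,\eta)=e^{-\pi c^2(\xi^2+\eta^2)}\bigl[e^{-\pi\gamma(\xi+\eta)(\xi-\eta)}-1\bigr]$, which manifestly vanishes on $\xi+\eta=0$. In the skew variables $\alpha=\xi+\eta$, $\beta=\xi-\eta$, with dual variables $P=x+y$ and $Q=x-y$, I would carry out the $\beta$-integration by complex completion of the square. Using $c^4-\gamma^2=a^2b^2$, this produces
\[
\nu_{j,n}(x,y)=\frac{1}{c\sqrt{2}}\,e^{-\pi Q^2/(2c^2)}\,\mathcal{B}(P,Q),\qquad \mathcal{B}(P,Q)=\int_{\R} e^{\pi i\alpha P}\,\widehat{\psi_n}(\alpha)\,\Phi(\alpha,Q)\,d\alpha,
\]
where $\Phi(\alpha,Q)=e^{-\pi(ab/c)^2\alpha^2/2}e^{-\pi i\alpha\gamma Q/c^2}-e^{-\pi c^2\alpha^2/2}$. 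The prefactor $e^{-\pi Q^2/(2c^2)}$ already furnishes Schwartz-level decay in $x-y$ at scale $c\sim 2^{k_j}$, which will produce the desired $(1+2^{-k_j}|x-y|)^{-4}$ factor.

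To estimate $\mathcal{B}$, observe that on the support $|\alpha|\sim 2^{-k_j-n}$ of $\widehat{\psi_n}$ both $c|\alpha|$ and $(ab/c)|\alpha|$ are $\lesssim 2^{-n}$, so the two Gaussians in $\Phi$ are $1+O(2^{-2n})$. Splitting
\[
\Phi(\alpha,Q)=\bigl[e^{-\pi i\alpha\gamma Q/c^2}-1\bigr]e^{-\pi(ab/c)^2\alpha^2/2}+\bigl[e^{-\pi(ab/c)^2\alpha^2/2}-e^{-\pi c^2\alpha^2/2}\bigr],
\]
and using $|e^{i\theta}-1|\le|\theta|$, $|e^{-X}-e^{-Y}|\le|X-Y|$, together with $\gamma/c^2\lesssim 1$ and $c^2-(ab/c)^2=\gamma^2/c^2\lesssim c^2$, one gets the pointwise bound $|\Phi(\alpha,Q)|\lesssim|\alpha||Q|+c^2\alpha^2$. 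Integrating this against $|\widehat{\psi_n}|\lesssim\mathbbm{1}_{|\alpha|\sim 2^{-k_j-n}}$ gives the direct estimate $|\mathcal{B}(P,Q)|\lesssim 2^{-k_j-n}(2^{-k_j-n}|Q|+2^{-2n})$.

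Four integrations by parts in $\alpha$ then produce the decay in $P$: boundary terms vanish by compact support of $\widehat{\psi_n}$, and the dominant derivative cost is $(2^{k_j+n})^4$ from $\widehat{\psi_n}$, which dominates the $c^4\le(2^{k_j+n})^4$ cost of differentiating $\Phi$ whenever $n\ge 1$. Combining with the direct bound yields $|\mathcal{B}(P,Q)|\lesssim(1+2^{-k_j-n}|P|)^{-4}\cdot 2^{-k_j-n}(2^{-k_j-n}|Q|+2^{-2n})$. Multiplying by the prefactor $e^{-\pi Q^2/(2c^2)}/(c\sqrt{2})\lesssim 2^{-k_j}(1+2^{-k_j}|Q|)^{-100}$ and converting the polynomial factor in $|Q|$ using $|Q|\,e^{-\pi Q^2/(2c^2)}\lesssim c(1+2^{-k_j}|Q|)^{-5}$ collects to exactly
\[
|\nu_{j,n}(x,y)|\lesssim 2^{-n}\cdot 2^{-k_j}(1+2^{-k_j}|x-y|)^{-4}\cdot 2^{-k_j-n}(1+2^{-k_j-n}|x+y|)^{-4}.
\]
The main obstacle is extracting exactly one full power of $2^{-n}$ from the near-cancellation in $\Phi$ (the two sources of smallness being the linear-in-$Q$ phase and the quadratic Gaussian difference, neither alone of order $2^{-n}$) while simultaneously preserving polynomial decay in both $P$ and $Q$; the precise factoring afforded by the identity $c^4-\gamma^2=a^2b^2$ is what makes the $\beta$-integral close into the clean form above.
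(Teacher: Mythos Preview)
Your approach is correct in outline and genuinely different from the paper's. The paper rescales to $k_j=0$ and proves, by direct differentiation of the Gaussian formula and the mean value theorem, the symbol estimates
\[
|\partial_{(1,-1)}^\alpha\partial_{(1,1)}^\beta\widehat{\nu}(\xi,\eta)|\lesssim|\xi+\eta|^{\max\{1-\beta,0\}}(1+|\xi-\eta|)^{-2}
\]
for $|\xi+\eta|\le1$ and $0\le\alpha,\beta\le4$; it then combines these with derivative bounds for the window in $\xi+\eta$ via Leibniz and reads off the spatial bounds by four separate $L^1\to L^\infty$ estimates on the Fourier side. Your route instead evaluates the $\beta$-integral exactly by Gaussian completion of the square, which isolates the prefactor $e^{-\pi Q^2/(2c^2)}$ and thereby delivers the $(x-y)$-decay for free, leaving only a one-variable oscillatory integral $\mathcal{B}$ for the $(x+y)$-decay. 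The identity $c^4-\gamma^2=a^2b^2$ is a nice touch; the paper's argument never needs it because it only uses Gaussian decay qualitatively. (Minor point: the sign in your bracket should read $e^{+\pi\gamma(\xi+\eta)(\xi-\eta)}-1$, which changes nothing in the estimates.)

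There is one imprecision worth flagging. In the integration-by-parts step you assert that differentiating $\Phi$ costs at most $c$ per derivative and therefore that the combined bound on $\mathcal{B}$ retains only the first power of $|Q|$. This is not quite right: when $\partial_\alpha$ hits the phase $e^{\pm\pi i\gamma\alpha Q/c^2}$ it produces a factor $\gamma|Q|/c^2\lesssim|Q|$, so after four integrations by parts the bound on $\mathcal{B}$ actually carries terms up to $|Q|^4$. This does not break the argument, because the Gaussian prefactor converts $|Q|^k e^{-\pi Q^2/(2c^2)}\lesssim c^k(1+2^{-k_j}|Q|)^{-100}$, and the extra factors $c^k\le 2^{k k_j}\le(2^{k_j+n})^k$ are already accounted for in the budget $(2^{k_j+n})^4$; a short check of the exponents shows every resulting term is still dominated by the target bound. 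So the conclusion stands, but the intermediate claim ``$|\mathcal{B}(P,Q)|\lesssim(1+2^{-k_j-n}|P|)^{-4}\cdot 2^{-k_j-n}(2^{-k_j-n}|Q|+2^{-2n})$'' should be replaced by a version allowing $|Q|^k$ with $0\le k\le4$, and the absorption of those powers into the prefactor made explicit.
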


\begin{proof}[Proof of Lemma~\ref{lemmadecay}]
Scaling by a factor $2^{k_j}$ allows us to assume
$k_j=0$ and  $-1\le m_j\le 0$ and $l_j\le 0$.
We fix $j$ and omit the index $j$.   We thus have to prove
\begin{equation}\label{toprovenun2}
|\nu_{n}(x,y)|\lesssim 2^{-2n} (1+|x-y|)^{-4}(1+2^{-n} |x+y|)^{-4}, 
\end{equation}
where
\begin{equation}\label{defnun2}
\widehat{\nu_{n}}(\xi,\eta)=
((\widehat{\chi_{( 2^{-1})}})^2-(\widehat{\chi})^2)(2^{n}(\xi+\eta))\widehat{\nu}(\xi,\eta)
\end{equation}
with
\begin{equation}\label{defnu2}
\widehat{\nu}(\xi,\eta)= g(2^l\xi) g(\eta)- g(2^{m}\xi)g(2^{m}\eta).
\end{equation}

We claim that for $0\le \alpha\le 4$ and $0\le \beta\le 4$ 
and $|\xi+\eta|\le 1$
 \[|\partial_{(1,-1)}^\alpha \partial_{(1,1)}^\beta \widehat{\nu}(\xi,\eta)|
 \lesssim |\xi+\eta|^{\max\{1-\beta,0\}} (1+|\xi-\eta|)^{-2}.\]
For $\beta>0$, this follows by deriving \eqref{defnu2} 
and using the decay of Gaussians and their derivatives
for those Gaussians whose argument contains $m$ or $k$,
because $-1\le m\le 0$ and $k=0$. Here we also use the fact that whenever $|\xi+\eta|\leq 1$ and $|\xi-\eta|\geq 1$, then the three quantitities $|\xi-\eta|$, $|\xi|$ and $|\eta|$ are comparable.

We next estimate the term with $\beta=0$. By the choice of $m$, the function $\widehat{\nu}$ vanishes on the diagonal $\xi+\eta=0$, and thus the same property holds also for $\partial_{(1,-1)}^\alpha \widehat{\nu}$. Therefore,
\[|\partial_{(1,-1)}^\alpha \widehat{\nu}(\xi,\eta)|
= |\partial_{(1,-1)}^\alpha \widehat{\nu}(\xi,\eta)
-\partial_{(1,-1)}^\alpha \widehat{\nu}((\xi-\eta)/2,-(\xi-\eta)/2)|\]
\[\lesssim |\xi+\eta| \big|\int_{0}^1 \partial_{(1,-1)}^\alpha \partial_{(1,1)} \widehat{\nu}((\xi-\eta)/2+r(\xi+\eta)/2,
-(\xi-\eta)/2+r(\xi+\eta)/2)\,dr\big|\]
\[
\lesssim |\xi+\eta|\sup_{0\le r\le 1}
| \partial_{(1,-1)}^\alpha \partial_{(1,1)} \widehat{\nu}((\xi-\eta)/2+r(\xi+\eta)/2,
-(\xi-\eta)/2+r(\xi+\eta)/2)|
\]
\[\lesssim |\xi+\eta| (1+|\xi-\eta|)^{-2}.\]

Turning to $\widehat{\nu}_n$ as in \eqref{defnun2}, using that
$\widehat{\chi_{( 2^{-1})}}^2-\widehat{\chi}^2$ is supported in $[-2,2]$, we obtain
by differentiating
 \[|\widehat{\nu_{n}}(\xi,\eta)| 
 \lesssim 2^{-n} 1_{|2^n(\xi+\eta)| <1} (1+|\xi-\eta|)^{-2},\]
 \[|\partial_{(1,-1)}^4  \widehat{\nu_n}(\xi,\eta)|
 \lesssim 2^{-n} 1_{|2^n(\xi+\eta)| <1} (1+|\xi-\eta|)^{-2},\]
 \[|\partial_{(1,1)}^4 \widehat{\nu_n}(\xi,\eta)|
 \lesssim 2^{3n} 1_{|2^n(\xi+\eta)| <1} (1+|\xi-\eta|)^{-2},\]
 \[|\partial_{(1,-1)}^4\partial_{(1,1)}^4 \widehat{\nu_n}(\xi,\eta)|
 \lesssim 2^{3n} 1_{|2^n(\xi+\eta)| <1} (1+|\xi-\eta|)^{-2}.\]
Hence, estimating the  Fourier inversion formula crudely by $L^1\to L^\infty$ bounds,
$$|{\nu_n}(x,y)|\lesssim 2^{-2n},$$
$$|x-y|^4 |{\nu_n}(x,y)|\lesssim 2^{-2n},$$
$$|x+y|^4 |{\nu_n}(x,y)|\lesssim 2^{2n},$$
$$|x-y|^4 |x+y|^4 |{\nu_n}(x,y)|\lesssim 2^{2n}.$$
We can summarize these findings into \eqref{toprovenun2},
as can be seen by splitting into four cases
depending on whether $2^n\le |x+y|$ or $2^n> |x+y|$
and depending on whether
$1\le |x-y|$ or $1> |x-y|$. This proves the lemma.
\end{proof}

We finally turn to $n<0$. As in the previous case,
we introduce an artificial factor and write  
\[\widehat{K_{2,n}}(\xi,\eta,\tau) = C\sum_{j=1}^J ((\widehat{\chi_{(\alpha 2^{k_{j-1}+n-1})}})^2-(\widehat{\chi_{(\alpha 2^{k_{j}+n-1})}})^2)(\xi+\eta) \]
\[ \times 
((\widehat{\chi_{(\alpha 2^{k_{j-1}+n})}})^2-(\widehat{\chi_{(\alpha 2^{k_{j-1}+n+1})}})^2)(\xi+\eta)
\widehat{(\nu_j)_{(\alpha)}}(\xi,\eta)\widehat{\phi_{3,j}}(\tau)\widehat{\phi_{3,j}}(-\tau-\xi-\eta).
\]
This kernel is of the form \eqref{ker1space} with
$$\widehat{\rho_j}(\xi,\eta,\tau, \sigma)=
\widehat{(\nu_{j,n})_{(\alpha)}}(\xi,\eta)\widehat{\phi_{3,j}}(\tau)\widehat{\phi_{3,j}}(\sigma)
$$
with $\nu_{j,n}$ as in \eqref{nujn-neq}.
 
We break both functions $\widehat{\phi_{3,j}}$ into pieces as above.
All pieces are done similarly, we discuss a typical piece of $\rho_j$ given by
$$\widehat{\varrho_j}(\xi,\eta,\tau, \sigma)=
\widehat{(\nu_{j,n})_{(\alpha)}}(\xi,\eta)\widehat{\chi_{j-1}}(\tau)\widehat{\phi_{j}}(\sigma) .
$$
With  Lemma \ref{lemmadecay1}, we obtain
\[\int_{\R^2}|\varrho_j|(u_1+p,u_2+p,u_3+r,u_4+r)\, dpdr\]
\[\lesssim 2^{n} \alpha^{-1} 2^{- k_{j}}(1+\alpha^{-1} 2^{- k_{j}} 
|u_1-u_2|)^{-4}2^{-k_{j}}(1+2^{-k_{j}}|u_3-u_4|)^{-2}.\]
Proposition \ref{secstickphi} gives $\Lambda_{D_2,K_{2,n}}((f_s)_{s\in S})\lesssim 2^{n}$, as desired.

\begin{lemma}\label{lemmadecay1}
We have for every $1\le j\le J$ and every $x,y\in \R$ the estimate 
\[|\nu_{j,n}(x,y)|\lesssim  2^n 2^{- k_{j-1}}(1+2^{- k_{j-1}}|x|)^{-4}2^{- k_{j}}(1+2^{- k_{j}} |y|)^{-4} \]
\end{lemma}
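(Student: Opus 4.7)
The plan follows the same template as Lemma~\ref{lemmadecay}. By rescaling both variables by $2^{k_{j-1}}$ we may assume $k_{j-1}=0$, $l_j=0$, and set $k:=k_j\ge 10$; the interlacing hypothesis yields $k-1\le m:=m_j\le k$. The reduced claim becomes
\begin{equation*}
|\nu_n(x,y)| \lesssim 2^n (1+|x|)^{-4}\,2^{-k}(1+2^{-k}|y|)^{-4},
\end{equation*}
where
\begin{equation*}
\widehat{\nu_n}(\xi,\eta)=\bigl[g(\xi)g(2^k\eta)-g(2^m\xi)g(2^m\eta)\bigr]\,\Psi(2^n(\xi+\eta))
\end{equation*}
and $\Psi(u):=(\widehat{\chi})^2(u)-(\widehat{\chi})^2(2u)$ is supported in $\{1/2\le |u|\le 1\}$.

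The decisive observation is that for $n<0$ the cutoff $\Psi(2^n(\xi+\eta))$ localizes $|\xi+\eta|\sim 2^{-n}\ge 2$, while the Gaussians localize their arguments on the much smaller scales $2^{-k}\le 2^{-10}$ and $2^{-m}\le 2^{-9}$. These two frequency regions are widely separated, producing pointwise Gaussian-tail factors of size $e^{-c\,2^{-2n}}$ on the first summand and $e^{-c\,2^{2(m-n)}}$ on the second. Since only a very modest $2^n$ gain is required, the enormous tail provides abundant room.

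I will derive the pointwise bound via $L^1\to L^\infty$ Fourier inversion: for $\alpha,\beta\in\{0,4\}$, $|x^\alpha y^\beta \nu_n(x,y)|\lesssim\|\partial_\xi^\alpha \partial_\eta^\beta \widehat{\nu_n}\|_1$. Leibniz-expanding, derivatives on $g(\xi)$ or $g(2^m\cdot)$ produce polynomial factors subsumed into the Gaussian tail, derivatives on $g(2^k\eta)$ produce at most $2^{k\beta}$, and derivatives on $\Psi(2^n(\xi+\eta))$ contribute at most $2^{na}\le 1$ for $n\le 0$. Changing variables $w=\xi+\eta$ and integrating first in $\xi$ (a convolution of two Gaussians evaluated at $w$, contributing a factor $2^{-k}$ together with pointwise Gaussian decay in $w$), and then in $w$ over the annulus $|w|\sim 2^{-n}$, one obtains
\begin{equation*}
\|\partial_\xi^\alpha \partial_\eta^\beta \widehat{\nu_n}\|_1 \lesssim 2^{k\beta}\,2^{-k}\,2^{-n}\,e^{-c\,2^{-2n}} \lesssim 2^n\,2^{-k}\,2^{k\beta},
\end{equation*}
where the last step uses the elementary inequality $t\,e^{-ct^2}\lesssim t^{-1}$ for $t\ge 1$ applied at $t=2^{-n}$. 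The second summand contributes an even smaller amount via $e^{-c\,2^{2(m-n)}}$. Combining the four resulting bounds by case-splitting according to whether $|x|\le 1$ and whether $2^{-k}|y|\le 1$ yields the desired pointwise estimate. The only subtlety---rather than a genuine obstacle---is verifying that the polynomial losses from differentiating $\Psi$ and from Gaussian-derivative prefactors do not spoil the $2^n$ gain; since $e^{-c\,2^{-2n}}$ beats any polynomial in $2^{-n}$, these are easily absorbed. The proof is structurally parallel to that of Lemma~\ref{lemmadecay}, with the Gaussian-tail argument replacing the diagonal-vanishing argument used there for the opposite sign of $n$.
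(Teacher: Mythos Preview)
Your proof is correct and follows essentially the same idea as the paper: the cutoff $\Psi(2^n(\xi+\eta))$ forces $|\xi+\eta|\sim 2^{-n}$, which lies in the Gaussian tail and supplies the factor $2^n$. The paper organizes the computation slightly differently: it splits $\nu_j$ into its two summands and rescales each one separately (anisotropically by $2^{l_j}$ in $\xi$ and $2^{k_j}$ in $\eta$ for the first, isotropically by $2^{m_j}$ for the second), so that in each case both Gaussians sit at unit scale and the bound $|\partial^\gamma(g(\xi)g(\eta))|\lesssim 2^n(1+|\xi|)^{-4}(1+|\eta|)^{-4}$ on the support of the cutoff can be read off directly. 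You instead rescale once isotropically by $2^{k_{j-1}}$ and then carry the mismatched factor $2^{k\beta}$ through the $L^1$ estimates; this is equivalent but requires the extra bookkeeping you outline. Two small inaccuracies worth noting: the support of $\Psi$ is $\{1/4\le |u|\le 1\}$ rather than $\{1/2\le |u|\le 1\}$, and for the first summand the Gaussian $g(\xi)$ localizes $\xi$ at scale $1$, not $2^{-k}$ (it is $g(2^k\eta)$ that pins $|\eta|\lesssim 2^{-k}$, which then forces $|\xi|\gtrsim 2^{-n}$ and triggers the tail). Neither affects the argument.
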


\begin{proof}
We split the function 
\begin{equation*}
\widehat{\nu_j}(\xi,\eta)= g(2^{l_j}\xi) g(2^{k_j}\eta)- g(2^{m_j}\xi)g(2^{m_j}\eta)
\end{equation*}
into its two summands and consider the summands separately.
Consider the term
\[ g(2^{l_j}\xi) g(2^{k_j}\eta).\]
Scaling by the factor $2^{l_j}$ in $\xi$ and $2^{k_j}$ in $\eta$  reduces the matter to proving
\begin{equation}\label{E:mun}
|\mu_{n}(x,y)| \lesssim 2^n(1+|x|)^{-4} (1+|y|)^{-4}
\end{equation}
where
\[\wh{\mu_{n}}(\xi,\eta)= (\widehat{\chi}^2-\widehat{\chi_{(2)}}^2)(2^n\xi+2^{n+l}\eta)g(\xi) g(\eta)\]
with $l \leq 0$.
On the support of the function
\[(\widehat{\chi}^2-\widehat{\chi_{(2)}}^2)(2^n\xi+2^{n+l}\eta), \]
we have 
\[|\partial_\xi^\alpha \partial_\eta^\beta 
g(\xi) g(\eta)|\lesssim 2^n (1+|\xi|)^{-4} (1+|\eta|)^{-4}\]
for all $0\le \alpha,\beta\le 4$. By the Leibniz rule, analogous bounds hold for $\wh{\mu_n}$.
The function $\mu_n$ then satisfies the bound~\eqref{E:mun}.
This is the desired estimate for the term
$g(2^{l_j}\xi) g(2^{k_j}\eta)$.

To estimate the term $g(2^{m_j}\xi) g(2^{m_j}\eta)$, we rescale by $2^{m_j}$ in both variables and claim
\[
|\mu_{n}(x,y)| \lesssim 2^n 2^{5l}(1+|x|)^{-4} (1+|y|)^{-4}
\]
where
\[\wh{\mu_{n}}(\xi,\eta)= (\widehat{\chi}^2-\widehat{\chi_{(2)}}^2)(2^{n+l}(\xi+\eta))g(\xi) g(\eta)\]
and $l=k_{j-1}-m_j \leq 0$. This follows similarly as before, using the decay of the Gaussians. As
\[
2^{5l}(1+|x|)^{-4} 
\lesssim 2^{-l}(1+2^{-l}|x|)^{-4},
\]
this completes the proof of the lemma.
\end{proof}

\section{Proof of Proposition~\ref{thm:4} using Propositions~\ref{secstickphi}, \ref{secstickgauss},
and Theorem~1.1 in 
\texorpdfstring{\cite{MR4510172}}{}}
\label{sec:thm:4}

Let $\alpha\ge 1$. Let $J$ be a positive integer and $(k_j)_{j=0}^J$   a   finite increasing sequence of integers with $k_{j-1}+10 \leq k_{j}$
for $1\le j \leq J$, let $(m_j)_{j=1}^J$ be a sequence of real numbers with
$k_j-1\le m_j\le k_j$. For $0\le j\le J$, let $\chi_j$
be a function such that ${(\chi_j)_{(2^{2-k_j})}}$ is a left window
and let $\phi_{j}$ be as in the statement of the proposition, i.e. 
$$ (\widehat{\phi_{j}})^2=(\widehat{\chi_{j-1}})^2-
(\widehat{\chi_{j}})^2.$$
Let a tuple $(f_s)_{s\in S}$ be given as in \eqref{twosides}, \eqref{normal2}
and write $f_{(0,j)}=f_0$, $f_{(1,j)}=f_1$
for any $j \in \mathcal C$.

Taking the Fourier transform, the kernel $K$ of the proposition reads as 
\[ \widehat{K}(\xi,\eta,\tau) =\alpha^{-N}\sum_{j=1}^J \widehat{g_{(\alpha 2^{m_j})}} (\xi)  \widehat{g_{(\alpha 2^{m_j})}}(\eta)  \widehat{\phi_{j}}(\tau) \widehat{\phi_{j}}(-\xi-\eta-\tau).  \]
Define the kernel $K_1$ by
 \[ \widehat{K_1}(\xi,\eta,\tau) :=\alpha^{-N}\sum_{j=1}^J \widehat{g_{(\alpha 2^{m_j})}}(\xi)\widehat{g_{(\alpha 2^{m_j})}}(\eta) (\widehat{\chi_{j-1}}(\tau)\widehat{\chi_{j-1}}(-\tau-\xi-\eta) - \widehat{\chi_{j}}(\tau)\widehat{\chi_{j}}(-\tau-\xi-\eta)  ). \]
Therefore,     on the critical space $\xi+\eta=0$, the kernels are equal, i.e., for all $\xi,\tau$ we have 
\[\widehat{K}(\xi,-\xi,\tau) = \widehat{K_1}(\xi,-\xi,\tau).  \]
By the triangle inequality, it suffices to estimate $\Lambda_{D_2,K-K_1}$ and  $\Lambda_{D_2,K_1}$. 

We begin with the latter. 
Since $\alpha \geq 1$, we observe that it in fact suffices to prove the (stronger) bound $|\Lambda_{D_2,\alpha^{N}K_1}((f_s)_{s\in S})| \lesssim~1$. 
Define the kernel $K_2$ by 
\[\widehat{K_2}(\xi,\eta,\tau) := \sum_{j=1}^J (\widehat{g_{(\alpha 2^{m_{j-1}})}}(\xi)\widehat{g_{(\alpha 2^{m_{j-1}})}}(\eta) - \widehat{g_{(\alpha 2^{m_j})}}(\xi)\widehat{g_{(\alpha 2^{m_j})}}(\eta)) \widehat{\chi_{j-1}}(\tau)\widehat{\chi_{j-1}}(-\tau-\xi-\eta)  \]
and define
\[\widehat{\sigma_{j}}(\xi,\eta,\tau):= \widehat{g_{(\alpha 2^{m_j})}}(\xi)\widehat{g_{(\alpha 2^{m_j})}}(\eta) \widehat{\chi_{j}}(\tau)\widehat{\chi_{j}}(-\tau-\xi-\eta).  \]
Here, we formally set $m_0=k_0$.
By telescoping, we have
\[\alpha^{N}K_1+K_2 = \sigma_0 - \sigma_J.\]
For each $j$, $\Lambda_{D_2,\sigma_{j}}((f_s)_{s\in S})$ equals   
\[  \int_{\R^7}        \Big[\prod_{s\in S} f_s(\Pi_s x)\Big ] g_{(\alpha 2^{m_j})} (x_2^0-x_0-x_1+p) g_{(\alpha 2^{m_j})}(x_2^1-x_0-x_1+p)  \chi_j (x_3^{{0}}+p)\chi_j (x_3^1+p)  \,dxdp, \]
where $x=(x_0,x_1,x_2^0,x_2^1,x_3^0,x_3^1)$. 
This can be {estimated} using a classical Brascamp-Lieb inequality as
\begin{equation}\label{sigmabl}
 |\Lambda_{D_2,\sigma_{j}}((f_s)_{s\in S})| \lesssim \|g_{(\alpha 2^{m_j})}\|^2_1 \|\chi_{j}\|_1^2\prod_{s\in S} \|f_s\|_8\lesssim 1.   
\end{equation}
One can verify this Brascamp-Lieb inequality
by interpolation between estimates that put one of the functions $f_s$ in $L^1$ and all others in $L^\infty$.

The estimate of $\Lambda_{D_2,\alpha^{N}K_1} $ is thus reduced to an estimate of 
$\Lambda_{D_2,K_2}$, which we now proceed to do.
We use the fundamental theorem of calculus to split up a difference of Gaussians with parameters $a,b$ as
\[ {g}(a\xi){g}(a\eta) - {g}(b\xi){g}(b\eta) = 
\int_{a}^{b}   -t\partial_t ( g(t\xi) g(t\eta)) \, \frac{dt}{t}
 = 2\pi \int_{a}^{b} t^2(\xi^2+\eta^2) g(t\xi) g(t\eta)\, \frac{dt}{t}\]
\begin{equation}\label{polid} = 2\pi \int_{a}^{b} t^2(\xi+\eta)^2 g(t\xi) g(t\eta) \,\frac{dt}{t} - 4\pi  \int_{a}^{b} t^2\xi\eta g(t\xi)  g(t\eta)\,
\frac{dt}{t}  .\end{equation}
Using this splitting, in place of $\Lambda_{D_2,K_2}$ we may estimate 
$\Lambda_{D_2,K_3}$ and $\Lambda_{D_2,K_4}$
with
\begin{equation*}
\widehat{K_3}(\xi,\eta,\tau):= \sum_{j=1}^J \int_{\alpha 2^{m_{j-1}}}^{{\alpha 2^{m_{j}}}} t^2(\xi+\eta)^2 g(t\xi) g(t\eta) \, \frac{dt}{t} \;\widehat{\chi_{j-1}}(\tau)\widehat{\chi_{j-1}}(-\tau-\xi-\eta) 
\end{equation*}
and, using $h=g'$ and that $\widehat{h}(\xi)$ is a constant multiple of $\xi\widehat{g}(\xi)$,
\begin{equation*}
\widehat{K_4}(\xi,\eta,\tau):=  \sum_{j=1}^J \int_{\alpha 2^{m_{j-1}}}^{{\alpha 2^{m_{j}}}}
\widehat{h} (t\xi)  \widehat{h}(t\eta)\,
\frac{dt}{t}\;\widehat{\chi_{j-1}}(\tau)\widehat{\chi_{j-1}}(-\tau-\xi-\eta).
\end{equation*}
Proposition \ref{secstickgauss} gives
\[|\Lambda_{D_2,K_3}((f_s)_{s\in S})| \lesssim 1 .\]
 We turn to $\Lambda_{D_2,K_4}$, which we write on the spatial side as
\[
 \sum_{j=1}^J \int_{\alpha 2^{m_{j-1}}}^{{\alpha 2^{m_{j}}}} \int_{\R^5}  \Big[ \int_{\R} 
 \Big[ \prod_{i=0,1}  f_0(x_0, x_2^i,x_3) f_1(x_1,x_2^i,x_3)  \Big ]
 h_{(t)} (x_3+p)\, dx_3 \Big ]^2       \]
 \begin{equation*}
   \times  \chi_{j-1}(x_2^0-x_0-x_1+p)   \chi_{j-1}(x_2^1-x_0-x_1+p) dx_0 dx_1 dx_2^0 dx_2^1 \,   dp \frac{dt}{t}.
 \end{equation*}
Using positivity of the square in this expression, we may dominate
$$|\chi_{j-1}(u)\chi_{j-1}(v)|\le 
\int_{1}^\infty  
g_{(\beta2^{m_{j-1}})}(u)g_{(\beta2^{m_{j-1}})}(v) \beta^{-N}\, \frac {d\beta}\beta.$$
Then   it suffices to estimate for fixed $\beta\ge 1$ the form $\Lambda_{D_2,K_5}$
where 
\begin{equation*}
\widehat{K_5}(\xi,\eta,\tau)  :=   \sum_{j=1}^J \int_{\alpha 2^{m_{j-1}}}^{{\alpha 2^{m_{j}}}} \widehat{h_{(t)}}(\xi)\widehat{h_{(t)}}(\eta) \, \frac{dt}{t}\;\widehat{g_{(\beta 2^{m_{j-1}})}}(\tau)\widehat{g_{(\beta 2^{m_{j-1}})}}(-\tau-\xi-\eta)  .
\end{equation*}
 We introduce a new kernel 
\[\widehat{K_6} (\xi,\eta,\tau)   = \sum_{j=1}^J\widehat{g_{(\alpha 2^{m_{j}})}}(\xi)\widehat{g_{(\alpha 2^{m_{j}})}}(\eta)   \int_{\beta 2^{m_{j-1}}}^{\beta 2^{m_{j}}}  \widehat{h_{(t)}}(\tau){\widehat{h_{(t)}}}(-\tau-\xi-\eta)  \, \frac{dt}{t}.  \]
The kernel $K_6$ is symmetric to $K_5$ under the symmetry \eqref{hiddensymm}.
 We note that, for some $M$, which is even in all variables and symmetric
 under switching the first two variables or switching the second two variables,
\[K_5(x,y,z)=\int_{\R} M(x+p,y+p,z+p,p)\, dp.\]
With $\widetilde{K}_5$ as defined near \eqref{hiddensymm}, we have
\[\widetilde{K}_5(x,y,z)=\int_\R M( \frac {x+y+z}2+p,\frac{x-y+z}2+p,z+p,p)\, dp\]
\[=\int_\R M(-p,-y-p,-\frac{x-z+y}2-p,-\frac{x+z+y}2-p)\, dp,\]
where we obtained the last identity by the  substitution of $p$ by $-p-\frac {x+y+z}2$.
For $\widetilde{K}_5^*$ as defined near \eqref{hiddensymm}, we obtain
\[\widetilde{K}_5^*(x,y,z)=\int_\R M(-p,-z-p,-\frac{x-y+z}2-p,-\frac{x+y+z}2-p)\, dp.\]
Using that $M$ is an even function and that it is invariant under interchanging
the first two entries or the second two entries, we obtain
\[
\widetilde{K}_5^*(x,y,z)=\int_\R M(z+p,p,\frac{x+y+z}2+p,\frac{x-y+z}2+p)\, dp.
\]
Inverting the tilde operation, we identify the kernel
\[{K}_6^*(x,y,z)=\int_{\R} M(z+p,p,x+p,y+p)\, dp.\]
Hence, the star symmetry acts on $M$ by interchanging the first two variables with the second two variables in $M$.

As $\Lambda_{D_2,K_5}((f_s)_{s\in S})$ is positive by the above
construction, it  follows by symmetry that  $\Lambda_{D_2,K_6}$ is positive as well and it suffices to estimate the sum $\Lambda_{D_2,K_5+K_6}$.

We reverse the arguments leading from
$K_2$ to $K_4$, with a Gaussian in place of $\chi_{j-1}$,
and apply these arguments both to $K_5$ and symmetrically to $K_6$.

In place of $\Lambda_{D_2,K_3}$, we obtain the corresponding forms
$\Lambda_{D_2,K_7}$  and $\Lambda_{D_2,K_8}$ with
\begin{equation*}
\widehat{K_7}(\xi,\eta,\tau):= \sum_{j=1}^J \int_{\alpha 2^{m_{j-1}}}^{{\alpha 2^{m_{j}}}} t^2(\xi+\eta)^2 g(t\xi) g(t\eta) \, \frac{dt}{t} \;\widehat{g_{(\beta 2^{m_{j-1}})}}(\tau)\widehat{g_{(\beta 2^{m_{j-1}})}}(-\tau-\xi-\eta),
\end{equation*}
\begin{equation*}
\widehat{K_8}(\xi,\eta,\tau):= \sum_{j=1}^J 
\widehat{g_{(\alpha 2^{m_{j}})}}(\xi)\widehat{g_{(\alpha 2^{m_{j}})}}(\eta)\int_{\beta 2^{m_{j-1}}}^{{\beta 2^{m_{j}}}} 
t^2(\xi+\eta)^2 g(t\tau) g(t(-\tau-\xi-\eta))  \,\frac{dt}{t}. \; 
\end{equation*}
Note that to arrive at $K_8$, in place of symmetry arguments, we 
may also use in place of \eqref{polid} the identity
$$(\xi+\eta)^2+2\tau(\tau+\xi+\eta) =\tau^2+(\tau+\xi+\eta)^2.$$
The forms $\Lambda_{D_2,K_{7}}$ and symmetrically $\Lambda_{D_2,K_{8}}$
are estimated analogously to $\Lambda_{D_2,K_3}$ using Proposition~\ref{secstickgauss}.

Having thus reverted the above steps and having arrived at 
the analogue of $\Lambda_{D_2,K_2}$, we have 
reduced the bound of $\Lambda_{D_2,K_5+K_6}$ to
a bound on $\Lambda_{D_2,K_9}$ with
\[\widehat{K_9}(\xi,\eta,\tau)  \]
\[= \sum_{j=1}^J \left[\widehat{g_{(\alpha 2^{m_{j-1}})}}(\xi)\widehat{g_{(\alpha 2^{m_{j-1}})}}(\eta) - \widehat{g_{(\alpha 2^{m_j})}}(\xi)\widehat{g_{(\alpha 2^{m_j})}}(\eta)\right] \widehat{g_{(\beta 2^{m_{j-1}})}}(\tau)\widehat{g_{(\beta 2^{m_{j-1}})}}(-\tau-\xi-\eta)   \]
\[+ \sum_{j=1}^J \widehat{g_{(\alpha 2^{m_j})}}(\xi)\widehat{g_{(\alpha 2^{m_j})}}(\eta)  \]
\[\times \left[
\widehat{g_{(\beta 2^{m_{j-1}})}}(\tau)\widehat{g_{(\beta 2^{m_{j-1}})}}(-\tau-\xi-\eta)
-\widehat{g_{(\beta 2^{m_{j}})}}(\tau)\widehat{g_{(\beta 2^{m_{j}})}}(-\tau-\xi-\eta)
\right]\]
\begin{equation*}
=\widehat{g_{(\alpha 2^{m_{0}})}}(\xi)\widehat{g_{(\alpha 2^{m_{0}})}}(\eta)\widehat{g_{(\beta 2^{m_{0}})}}(\tau)\widehat{g_{(\beta 2^{m_{0}})}}(-\tau-\xi-\eta) 
\end{equation*}
\begin{equation*}
-\widehat{g_{(\alpha 2^{m_{J}})}}(\xi)\widehat{g_{(\alpha 2^{m_{J}})}}(\eta)\widehat{g_{(\beta 2^{m_{J}})}}(\tau)\widehat{g_{(\beta 2^{m_{J}})}}(-\tau-\xi-\eta) ,
\end{equation*}
where in the last identity we have telescoped the sum. We then obtain
\[|\Lambda_{D_2,K_9}((f_s)_{s\in S})| \lesssim 1\]
by a standard Brascamp-Lieb inequality analogously to the bound \eqref{sigmabl}.  
This completes the bound for  $\Lambda_{D_2,K_1}$.

It remains to estimate $\Lambda_{D_2,K-K_1}$. We have
\[(\widehat{{K}}-\widehat{K_1})(\xi,\eta,\tau) = \alpha^{-N}\sum_{j=1}^J (\widehat{g_{(\alpha 2^{m_j})}} (\xi)  \widehat{g_{(\alpha 2^{m_j})}}(\eta)   \psi_j(\tau, -\tau-\xi-\eta)\]
with
\[\psi_j=\phi_{j}\otimes \phi_{j}  -({\chi_{j-1}}\otimes {\chi_{j-1}} - {\chi_{j}}\otimes {\chi_{j}}  ). \]
Define
\[{\vartheta_{1,j}}  = \phi_{j} - \chi_{j-1}\]
\[{\vartheta_{2,j}} = \chi_{j-1} - (\chi_{j})_{(2^{-4})} \]
 \[\varrho_j  =\psi_j - \vartheta_{1,j}  \otimes \vartheta_{2,j}
- 
\vartheta_{2,j}  \otimes \vartheta_{1,j}  
\]

\[\widehat{{K}_{10}}(\xi,\eta,\tau) = \sum_{j=1}^J (\widehat{g_{(\alpha 2^{m_j})}} (\xi)  \widehat{g_{(\alpha 2^{m_j})}}(\eta)   \wh{\vartheta_{1,j}}(\tau)\wh{\vartheta_{2,j}}(-\tau-\xi-\eta)\]
\[\widehat{{K}_{11}}(\xi,\eta,\tau) = \sum_{j=1}^J (\widehat{g_{(\alpha 2^{m_j})}} (\xi)  \widehat{g_{(\alpha 2^{m_j})}}(\eta)   \wh{\vartheta_{2,j}}(\tau)\wh{\vartheta_{1,j}}(-\tau-\xi-\eta)\]
\[\widehat{{K}_{12}}(\xi,\eta,\tau) = \alpha^{-N}\sum_{j=1}^J (\widehat{g_{(\alpha 2^{m_j})}} (\xi)  \widehat{g_{(\alpha 2^{m_j})}}(\eta)   \wh{\varrho_j}(\tau, -\tau-\xi-\eta)\]
By the triangle inequality, it remains to estimate 
$\Lambda_{D_2,K_{10}}$, $\Lambda_{D_2,K_{11}}$, $\Lambda_{D_2,K_{12}}$
separately.

We begin with $\Lambda_{D_2,K_{10}}$.
Recall that ${(\chi_j)_{(2^{2-k_j})}}$ is a left window.
If $(\tau, -\tau-\xi-\eta)$ is in the support of  $\widehat{\vartheta_{1,j}}\otimes \widehat{\vartheta_{2,j}}$, then 
\[|\tau| \leq 2^{-k_j+2}, \]
\[2^{-k_j+5}\leq |\tau+\xi+\eta| \leq 2^{-k_{j-1}+2}, \]
\[2^{-k_j+4}<|\xi+\eta|<2^{-k_{j-1}+3}.\]
Defining 
 \begin{equation*}
    \vartheta_{3,j}:  = (\phi_{j})_{(2^{-2})}
 \end{equation*}
we have
\[\widehat{K_{10}}(\xi,\eta,\tau) = \sum_{j=1}^J \widehat{g_{(\alpha 2^{m_j})}} (\xi)  \widehat{g_{(\alpha 2^{m_j})}}(\eta)\widehat{\vartheta_{1,j}}(\tau) \widehat{\vartheta_{2,j}}(-\tau-\xi-\eta) \widehat{\vartheta_{3,j}}(\xi+\eta)^2 \]
because the additional factor involving $\widehat{\vartheta_{3,j}}$  is constant 
one on the support of the original summand
in the definition of $K_{10}$. The bound
\[|\Lambda_{D_2,K_{10}}((f_s)_{s\in S})| \lesssim 1\]
then follows from Proposition \ref{secstickphi} applied with
 \[\rho_j:=\widehat{g_{(\alpha 2^{m_j})}} \otimes  \widehat{g_{(\alpha 2^{m_j})}}\otimes\widehat{\vartheta_{1,j}}\otimes \widehat{\vartheta_{2,j}}.\]

The form $\Lambda_{D_2,K_{11}}$ is estimated analoguously
to the form $\Lambda_{D_2,K_{10}}$.
It remains to estimate $\Lambda_{D_2,K_{12}}$. This form is a 
more standard singular Brascamp-Lieb form with a kernel associated with a  H\"ormander-Mikhlin multiplier and we will
apply Theorem 1.1 in \cite{MR4510172}, which was the reason to set $N=2^{18}$.

That theorem will give
\[|\Lambda_{D_2,K_{12}}((f_s)_{s\in S})| \lesssim 1\]
provided
\begin{equation}\label{hoer12}
|\partial^\gamma \widehat{K_{12}}(\xi,\eta,\tau)|
\lesssim |(\xi,\eta,\tau)|^{-|\gamma|} 
\end{equation}
for all multi-indices $\gamma$ of order $0\le |\gamma|\le N$.
The assumption of that theorem that $\Pi_s\Pi^T$ is regular 
for the present datum $D_2$ is satisfied.
It thus remains to show \eqref{hoer12}.

By definition of $\psi_j$ and $\vartheta_{1,j}$, we obtain
\[
\psi_j=\chi_{j-1}\otimes \vartheta_{1,j}
+ \vartheta_{1,j}\otimes \chi_{j-1}+
\vartheta_{1,j}\otimes \vartheta_{1,j}   + {\chi_{j}}\otimes {\chi_{j}}  . \]
Using further the definition of $\varrho_j$ and $\vartheta_{2,j}$, we obtain
\begin{equation}\label{E:rho}
\varrho_j=(\chi_{j})_{(2^{-4})} \otimes \vartheta_{1,j}
+ \vartheta_{1,j}\otimes (\chi_{j})_{(2^{-4})}+
\vartheta_{1,j}\otimes \vartheta_{1,j}   + {\chi_{j}}\otimes {\chi_{j}}.
\end{equation}
Note that $\widehat{\vartheta_{1,j}}$ vanishes outside 
\[[-2^{-k_j+2},2^{-k_j+2}]. \]
Hence $\widehat{\varrho_j}$ is supported on the ball of radius
$2^{10-k_j}$ around the origin. 
In addition, $\wh{\vartheta_{1,j}}$ coincides with $-1$ on $[-2^{-k_j+1},2^{-k_j+1}]$. Using that $(\chi_j)_{(2^{2-k_j})}$ is a left window, we then see that the Fourier transform of the first two terms on the right-hand side of~\eqref{E:rho} is equal to $-1$ on $[-2^{-k_j+1},2^{-k_j+1}]^2$ while the Fourier transform of the last two terms coincides with $1$ on the same set. Therefore,  $\wh{\varrho_j}$ vanishes inside the ball of radius $2^{-k_j}$ around the origin. The support properties of $\wh{\varrho_j}$ together with the estimates $|\wh{\varrho_j}| \lesssim 1$ and $g \lesssim 1$ yield that $|\wh{K_{12}}| \lesssim 1$.

Assume next that $\beta$ is a multi-index with $1\le |\beta|\le N$.
Then $\wh{\rho_j}$ satisfies symbol
estimates adapted to the ball of radius $2^{11-k_j}$ around the origin, namely
\[
|\partial^\beta \wh{\varrho_j}(\tau,\sigma)| \lesssim 2^{k_j|\beta|} 1_{|(\tau,\sigma)| \le 2^{11-k_j}.}
\]
Now assume first $|\xi-\eta|\le |(\xi+\eta,\tau)|.$
Then, using that all derivatives of $g$ up to order $N$
are $\lesssim 1$, and using that $|m_j-k_j|\le 1$ and
$\alpha \geq 1$,
\[|\partial^\beta \widehat{K_{12}}(\xi,\eta,\tau)|\lesssim \alpha^{-N} \sum_{j=1}^J(\alpha 2^{k_j})^{|\beta|} 1_{|(\tau,\tau+\xi+\eta)|\le 2^{11-k_j}}.\]
Using further that $\alpha\ge 1$ and $|\beta| \le N$
we estimate the last display by
\[\lesssim |(\tau,\xi+\eta)|^{-|\beta|}\lesssim |(\tau,\xi,\eta)|^{-|\beta|},\]
where in the last inequality we have used $|\xi-\eta|\le |(\tau, \xi+\eta)|$.
Now assume to the contrary that $|\xi-\eta|\ge |(\xi+\eta,\tau)|.$
Then we use that $|\partial^\beta g(\xi)|\lesssim e^{-|\xi|}$
for all $|\beta|\le N$. Then
\[|\partial^\beta \widehat{K_{12}}(\xi,\eta,\tau)|\lesssim \alpha^{-N} \sum_{j=1}^J(\alpha 2^{k_j})^{|\beta|} e^{-\alpha 2^{k_j}|\xi-\eta|} \lesssim  |\xi-\eta|^{-|\beta| }\sum_{j=1}^J(2^{k_j}|\xi-\eta|)^{|\beta|} e^{ -2^{k_j}|\xi-\eta|}\]
\[\lesssim |\xi-\eta|^{-|\beta|}\sum_{n\in \Z} 2^{n|\beta|}e^{-2^n}
\lesssim |(\xi,\eta,\tau)|^{-|\beta|}.\]

\section{Proof of Propositions  \ref{secstickphi} and \ref{secstickgauss}}
\label{sec:secstick}

The proofs of these propositions have some similarities, so we put them into 
one section and do the second proof analogously to the first.

\subsection{Proof of Proposition  \ref{secstickphi}}\label{sec:9.1}

For $1\le i\le 2$ let  $(a_{i,j})_{j=1}^{J}$ be increasing sequences of positive real numbers, we choose $a_{i,0}>0$ so that  $(a_{i,j})_{j=0}^{J}$
is still increasing.
For $1\leq j \leq J$ let $\rho_{j}:\R^4\to \R$ be a continuous function satisfying \eqref{rhobound}, and pick a further such function 
$\rho_{0}:\R^4\to \R$.
Let $(c_j)_{j=0}^J$ be a well separated increasing sequence of positive numbers. Let $\chi$ be a left window, and let $\phi_{j}$ be a function on $\R$ which for $1\le j \le J$ satisfy $\wh{\phi_j} \geq 0$ and  
 \[(\wh{\phi_{j}})^2 = (\wh{{\chi}_{(c_{j-1})}})^2 
 - (\wh{{\chi}_{(c_j)}})^2. \]
 Let $K$ be defined by \eqref{ker1space} and let a tuple $(f_s)_{s\in S}$ be given as in \eqref{twosides}, \eqref{normal2}.

The integrand of the integral expressing $\Lambda_{D_2,K}((f_s)_{s\in S})$
factors into functions depending on $x_0$
and functions depending on $x_1$. We 
write the integrals in $x_0$ and $x_1$ innermost and separate
these. With $p_0:=p$ and $p_1:=q$ we obtain
\begin{equation}\label{firstlambda29}
\Lambda_{D_2,K}((f_s)_{s\in S})=
\sum_{j=1}^J  \int_{\R^7}  \Big[\prod_{i=0,1} \int_{\R} \Big[ \prod_{q\in \mathcal{C}} f_{(i,q)}(\Pi_{(i,q)}x) \Big] \phi_{j}(x_i+p_i) \,dx_i\Big]    
\end{equation}
\[ \times \rho_{j}(x_2^0+p_0+p_1+r,x_2^1+p_0+p_1+r,x_3^0+r,x_3^1+r)  \, dx_2^0 dx_2^1 dx_3^0 dx_3^1\,dp_0 dp_1 dr  . \] 
Applying the Cauchy-Schwarz inequality in the seven exterior variables
 bounds the last display by the geometric mean of two forms, parameterized
 by $i=0,1$, which with the change of variables $p_{1-i} \to p_{1-i}-p_i-r$ we write as
\[\sum_{j=1}^J \int_{\R^7}  \Big[ \int_{\R} \Big[ \prod_{q\in \mathcal{C}} f_{(i,q)}(\Pi_{(i,q)}x) \Big] \phi_{j}(x_i+p_i)\, dx_i\Big]^2      \]
\begin{equation}
    \label{formaftercs1} \times |\rho_{j}|(x_2^0+p_{1-i},x_{2}^1+p_{1-i},x_3^0+r,x_3^1+r)  \, dx_2^0dx_2^1dx_3^0dx_3^1dp_0dp_1dr .
\end{equation}
Fix $i$ and write $f$ for $f_{(i,j)}$, which thanks to \eqref{twosides}
does not depend on $j$.

Using the decay \eqref{rhobound} for $\rho_{j}$,  we dominate 
\begin{equation}\label{rho23}
\int_{\R^2} |\rho_{j} | (x_2^0+p_{1-i},x_{2}^1+p_{1-i},x_3^0+r,x_3^1+r)\, dp_{1-i} dr
\end{equation}
\[ \lesssim \int_1^\infty \int_1^\infty (g*g)_{(\alpha a_{1,j})} (x_2^0-x_2^1)
(g*g)_{(\beta a_{2,j})}(x_3^0-x_3^1)  \,  \frac {d\alpha}{\alpha^2}  \frac {d\beta}{\beta^2}. \]
It suffices to consider fixed $\alpha$ and $\beta$, and prove uniform bounds
in $\alpha$ and $\beta$  for  \eqref{formaftercs1} with \eqref{rho23} replaced  by
\[(g*g)_{(\alpha a_{1,j})}
(x_2^0-x_2^1)
(g*g)_{(\beta a_{2,j})}(x_3^0-x_3^1) .\] 
Modifying the sequences $a_{i,j}$ if necessary, we may assume
$\alpha=\beta=1$.

Expanding the square in \eqref{formaftercs1} and integrating in $p_i$, our task
becomes to show
\begin{equation}\label{thirddatum}
\sum_{j=1}^J  \int_{\R^6}  \Big[  \prod_{s\in S} f (\Pi_sx)\Big]   (\phi_{j}*\phi_{j})(x_1^0-x_1^1)  
    (g * g)_{a_{1,j}}(x_2^0-x_2^1)  (g * g)_{ a_{2,j}}(x_3^0-x_3^1) \, dx  \lesssim 1,
\end{equation}
where $S$ and $(\Pi_s)_{s\in S}$ are as in the datum $D_{-I}$, which is the datum defined in $\eqref{datumDA}$ in the   case $A=-I$.

 Define the kernels
\begin{equation*}
     K_{1}  := 
        \sum_{j=1}^J ((\chi*\chi)_{(c_{j-1})} - (\chi*\chi)_{(c_{j})})\otimes (g* g)_{( a_{1,j})}\otimes  (g * g)_{( a_{2,j})}, 
\end{equation*} 
\[K_{2}:=\sum_{j=1}^J  (\chi*\chi)_{(c_{j-1})}\otimes ((g* g)_{( a_{1,j-1})} - (g* g)_{( a_{1,j})})\otimes (g * g)_{( a_{2,j})},\]
\[K_{3} := \sum_{j=1}^J  (\chi*\chi)_{(c_{j-1})} \otimes (g* g)_{( a_{1,j-1})} \otimes ((g  * g)_{( a_{2,j-1})} - (g * g)_{( a_{2,j})}),\]
and for $0\leq j \leq J$ also
\[\sigma_{j} = (\chi*\chi)_{(c_{j})}\otimes (g* g)_{( a_{1,j})}\otimes (g  * g)_{( a_{2,j})}.  \]
We have the telescoping identity
\begin{equation}
    \label{telesocpingid}
K_{1}+K_{2}+K_{3} = \sigma_{0} - \sigma_{J}.
\end{equation}

The form  \eqref{thirddatum} to be estimated becomes $\Lambda_{D_{-I}, K_{1}}((f)_{s\in S})$. 
For each $0\leq j \leq J$ one has by a standard Brascamp-Lieb inequality
\[|\Lambda_{D_{-I}, \sigma_{j}}((f)_{s\in S})| \lesssim 1. \]
It then suffices to estimate
the forms associated with $K_2$ and $K_3$ instead. By symmetry, we will only elaborate on  $\Lambda_{D_{-I}, K_{2}}((f)_{s\in S})$.

Next we would like to dominate
  $|(\chi*\chi)_{(c_{j-1})}|$ in these two forms by superposition of  Gaussians in such a way that the cancellation is preserved. To do that,  we will use the identity 
\begin{equation}\label{cor:gaus}
  (g*g)_{(a_{1,j-1})}   - (g*g)_{(a_{1,j})} =
  -\frac 1\pi \int_{a_{1,j-1}}^{a_{1,j}} (h*h)_{(t)}\,
\frac{dt}{t},   
\end{equation}
which follows by taking the Fourier transform of the identity
\[g(a\xi)^2   - g(b\xi)^2
=-\int_a^b  \partial_t g(t\xi)^2 dt=\frac 1\pi \int_a^b (2\pi t\xi  g(t\xi))^2 \frac{dt}t =-\frac 1\pi \int_a^b (\widehat{h}(t\xi))^2 \,\frac{dt}t
\]
for any $a,b>0$.
Using further that $h$ is odd and thus
\[- h*h(x-y)  =  \int_{\R} h(x+p)h(y+p)\,dp , \]
we obtain 
\[\Lambda_{D_{-I}, K_{2}}((f)_{s\in S}) = \frac 1\pi  \sum_{j=1}^J  \int_{a_{1,j-1}}^{a_{1,j}} \int_{\R^5}  \Big[\prod_{i=0,1} \int_{\R} \Big[ \prod_{s(1)=i} f (\Pi_sx) \Big] h_{(t)}(x_2^i+p) dx_2^i\Big]     \]
\begin{equation}
    \label{non-negative}
   \times   (\chi*\chi)_{(c_{j-1})}(x_1^0-x_1^1) (g*g)_{(a_{2,j})}(x_3^0-x_3^1)  \, dx_1^0dx_1^1dx_3^0dx_3^1dp \frac{dt}{t}.
\end{equation}
 
The product over $i=0,1$ has two identical factors and thus is nonnegative.
We may therefore estimate the last display by dominating 
\[|(\chi*\chi)_{(c_{j-1})}|\lesssim \int_1^\infty (g*g)_{(\beta c_{j-1})}\beta^{-5}\, d\beta .\]
It suffices to prove bounds of \eqref{non-negative}
with $(\chi*\chi)_{(c_{j-1})}$ replaced by $(g*g)_{(\beta c_{j-1})}$
uniformly in $\beta$. Fix $\beta$. By changing $c_j$ if necessary,
we may assume $\beta=1$.
Define again kernels
\begin{equation*}
     K_{4}  :=  
        \sum_{j=1}^J ((g * g)_{( c_{j-1})} - (g * g)_{( c_{j})})\otimes (g* g)_{( a_{1,j})}\otimes  (g * g)_{( a_{2,j})}, 
\end{equation*} 
\[K_{5}:=\sum_{j=1}^J  (g * g)_{( c_{j-1})}\otimes ((g* g)_{( a_{1,j-1})} - (g* g)_{( a_{1,j})})\otimes (g * g)_{( a_{2,j})},\]
\[K_{6} := \sum_{j=1}^J  (g * g)_{( c_{j-1})}\otimes (g* g)_{( a_{1,j-1})} \otimes ((g  * g)_{( a_{2,j-1})} - (g * g)_{( a_{2,j})}).\]
Similarly as near \eqref{telesocpingid},
\begin{equation}\label{456}\Lambda_{D_{-I}, K_{4}}((f)_{s\in S})+\Lambda_{D_{-I}, K_{5}}((f)_{s\in S})+\Lambda_{D_{-I}, K_{6}}((f)_{s\in S})
\end{equation}
telescopes into a form that is $\lesssim 1$ by a standard Brascamp-Lieb inequality.
We have seen above that $\Lambda_{D_{-I}, K_{5}}((f)_{s\in S})$ is positive.
By symmetric arguments, the other summands in \eqref{456} are also positive.
Hence each summand is $\lesssim 1$.
This completes the proof of Proposition \ref{secstickphi}.

\subsection{Proof of Proposition  \ref{secstickgauss}}\label{sec:9.2}

 Let a positive integer $J$ be given as well as increasing sequences
 of positive real numbers  $(a_j)_{j=0}^J$, $(b_j)_{j=1}^J$.
Pick $b_0>0$ so that   $(b_j)_{j=0}^J$ is an increasing sequence.
For $1\le j\le J$  let $\phi_{j}$ be given as in \eqref{phistickgauss}.
Let $K$ be defined by \eqref{ker2}.
Let a tuple $(f_s)_{s\in S}$ be given as in \eqref{twosides}, \eqref{normal2}.

 We write
 \[ t^2(\xi+\eta)^2
    g(t\xi)g(t\eta) = t^2(\xi+\eta)^2g(2^{-3/2}t(\xi+\eta))^2g(2^{-1}t(\xi-\eta)) g(2^{-1/2}t\xi)g(2^{-1/2}t\eta)
\]
\[  = -\widehat{h}(2^{-3/2}t(\xi+\eta))^2\widehat{\rho}(2^{-3/2}t(\xi,\eta)).
\]
with
\[
\widehat{\rho}(2^{-3/2}(\xi,\eta)):=\frac{2}{\pi^2}g(2^{-1}(\xi-\eta)) g(2^{-1/2}\xi)g(2^{-1/2}\eta).
\]

Hence passing to the spatial side as near \eqref{ker1}, replacing the arbitrary
sequence $a_j$ by $2^{-3/2}a_j$ to avoid the cumbersome factors $2^{-3/2}$,
\begin{equation*}
{K}(u,v,z)= \sum_{j=1}^J \int_{\R^3}\int_{a_{j-1}}^{a_j}
    h_{(t)}(p)
    h_{(t)}(q) {\rho_{(t)}}(u+p+q+r,v+p+q+r)\, \frac{dt}t
    \phi_{j}(z+r,r) \,dpdqdr. 
\end{equation*}
 We thus have analoguously to \eqref{firstlambda29}
\[ \Lambda_{D_2,K}((f_s)_{s\in S})=
\sum_{j=1}^J  \int_{\R^7}  \int_{a_{j-1}}^{a_j}
\Big[\prod_{i=0,1} \int_{\R} \Big[ \prod_{q\in \mathcal{C}} f_{(i,q)}(\Pi_{(i,q)}x) \Big] h_{(t)}(x_i+p_i) \,dx_i\Big]     \]
\[ \times \rho_{(t)}(x_2^0+p_0+p_1+r,x_2^1+p_0+p_1+r) \phi_j(x_3^0+r,x_3^1+r)  \, \frac{dt}t dx_2^0 dx_2^1 dx_3^0 dx_3^1 dp_0 dp_1 dr  . \] 
Applying the Cauchy-Schwarz inequality 
as in \eqref{formaftercs1}, we need to estimate for $i=0,1$
\[\sum_{j=1}^J \int_{\R^7} \int_{a_{j-1}}^{a_j}
\Big[ \int_{\R} \Big[ \prod_{q\in \mathcal{C}} f_{(i,q)}(\Pi_{(i,q)}x) \Big] h_{(t)}(x_i+p_i)\, dx_i\Big]^2     \]
\[ \times |\rho_{(t)}|(x_2^0+p_{1-i},x_2^1+p_{1-i})|\phi_j|(x_3^0+r,x_3^1+r)\,
\frac{dt}t dx_2^0dx_2^1dx_3^0dx_3^1dp_0dp_1dr .
\]
Thanks to the square, the above integrand is positive and we dominate
\[|\rho_{(t)}|\lesssim g_{(4t)}\otimes g_{(4t)}  \]
and 
\[|\phi_j|\lesssim \int_1^\infty g_{(\beta b_j)}\otimes  g_{(\beta b_j)}\beta^{-3}\, d\beta. \]
It suffices to prove bounds with $g_{(\beta b_j)}\otimes g_{(\beta b_j)}$
in place of $|\phi_j|$ uniformly in $\beta$. Fix $\beta$, we may assume $\beta=1$ by modifying the otherwise arbitrary sequence  $b_j$.

Performing the analogous steps as  leading to \eqref{thirddatum} we end up having to estimate  
$\Lambda_{D_{-I}, K_1}((f)_{s\in S})$,  
where now
\[K_1 := -\sum_{j=1}^J \int_{{a_{j-1}}}^{{a_{j}}} ( h*h)_{(t)}\otimes 
    ( g* g)_{(4 t)}\,\frac{dt}{t}  \otimes (  g* g)_{(b_j)}  . \]
Define 
\[K_2 := -\sum_{j=1}^J \int_{{a_{j-1}}}^{{a_{j}}} ( g*g)_{( t)}\otimes 
    ( h* h)_{(4 t)}\, \frac{dt}{t} \otimes (  g * g)_{(b_j)} ,  \]
    \[K_3 := -\sum_{j=1}^J  (g*g)_{({a_{j-1}})} \otimes  (g *g )_{(4 {a_{j-1}})} \otimes  \int_{b_{j-1}}^{b_j} (h*h)_{(t)} \,  \frac{dt}{t},\]
and for $0\leq j \leq J$ also 
\[\sigma_j = (g*g)_{({a_{j}})} \otimes (g*g)_{(4 {a_{j}})}\otimes (g*g)_{(b_j)} . \]
Then we have the telescoping identity
\begin{equation}
    \label{telescopingid-2}
   K_1 + K_2 +  K_3 = \pi (\sigma_0 - \sigma_J).
\end{equation}
Indeed, this follows with \eqref{cor:gaus}, which gives
\[ K_1 + K_2 =  \pi \sum_{j=1}^J \int_{{a_{j-1}}}^{{a_{j}}} -t\partial_t((g*g)_{( t)}\otimes (g* g)_{(4 t)}    )   \, \frac{dt}{t} \otimes    (g  * g)_{(b_j)}\]
\begin{equation*}
  =    \pi \sum_{j=1}^J  ((g*g)_{ {(a_{j-1})}}\otimes (g* g)_{(4 {a_{j-1}})}  - (g*g)_{ ({a_{j}})}\otimes (g* g)_{(4{a_{j}})} ) \otimes    (g  * g)_{(b_j)}, 
\end{equation*}
and 
\[K_3 = \pi \sum_{j=1}^J (g*g)_{ ({a_{j-1}})} \otimes (g*g)_{(4 {a_{j-1}})}  \otimes ((g*g)_{(b_{j-1})} - (g*g)_{(b_j)}). \]
 
 By the   identity \eqref{telescopingid-2}, 
 \[\Lambda_{D_{-I}, K_1}((f)_{s\in S}) + \Lambda_{D_{-I}, K_2}((f)_{s\in S}) + \Lambda_{D_{-I}, K_3}((f)_{s\in S}) \lesssim 1. \]
 All quantities on the left-hand side are non-negative. 
For $\Lambda_{D_{-I}, K_1}((f)_{s\in S})$, this can be seen as it resulted after an application  of the Cauchy-Schwarz inequality, while for  $\Lambda_{D_{-I}, K_2}((f)_{s\in S})$ and $\Lambda_{D_{-I}, K_3}((f)_{s\in S})$ it follows by symmetry. This gives the desired upper bound
$$\Lambda_{D_{-I}, K_1}((f)_{s\in S}) \lesssim 1. $$

\bibliography{arxivmay25}{}

\begin{thebibliography}{10}

\bibitem{MR2599882}
Tim Austin.
\newblock On the norm convergence of non-conventional ergodic averages.
\newblock {\em Ergodic Theory Dynam. Systems}, 30(2):321--338, 2010.

\bibitem{MR3345161}
Jeremy Avigad and Jason Rute.
\newblock Oscillation and the mean ergodic theorem for uniformly convex
  {B}anach spaces.
\newblock {\em Ergodic Theory Dynam. Systems}, 35(4):1009--1027, 2015.

\bibitem{bir30}
George~D. Birkhoff.
\newblock Proof of the ergodic theorem.
\newblock {\em Proc. Nat. Acad. Sci. U.S.A.}, 17(2):656--660, 1931.

\bibitem{MR1019960}
Jean Bourgain.
\newblock Pointwise ergodic theorems for arithmetic sets.
\newblock {\em Inst. Hautes \'Etudes Sci. Publ. Math.}, (69):5--45, 1989.
\newblock With an appendix by the author, Harry Furstenberg, Yitzhak Katznelson
  and Donald S. Ornstein.

\bibitem{BMSW22}
Jean Bourgain, Mariusz Mirek, Elias~M. Stein, and James Wright.
\newblock On a multi-parameter variant of the {B}ellow-{F}urstenberg problem.
\newblock {\em Forum Math. Pi}, 11:Paper No. e23, 64 pp, 2023.

\bibitem{MR4409885}
Polona Durcik and Vjekoslav Kova\v{c}.
\newblock A {S}zemer\'{e}di-type theorem for subsets of the unit cube.
\newblock {\em Anal. PDE}, 15(2):507--549, 2022.

\bibitem{MR4041095}
Polona Durcik, Vjekoslav Kova\v{c}, and Christoph Thiele.
\newblock Power-type cancellation for the simplex {H}ilbert transform.
\newblock {\em J. Anal. Math.}, 139(1):67--82, 2019.

\bibitem{MR3904183}
Polona Durcik, Vjekoslav Kova\v{c}, Kristina~Ana \v{S}kreb, and Christoph
  Thiele.
\newblock Norm variation of ergodic averages with respect to two commuting
  transformations.
\newblock {\em Ergodic Theory Dynam. Systems}, 39(3):658--688, 2019.

\bibitem{MR4510172}
Polona Durcik, Lenka Slav\'{i}kov\'{a}, and Christoph Thiele.
\newblock Local bounds for singular {B}rascamp-{L}ieb forms with cubical
  structure.
\newblock {\em Math. Z.}, 302(4):2375--2405, 2022.

\bibitem{DS22}
Polona Durcik and Mario Stip\v{c}i{\'c}.
\newblock Quantitative bounds for products of simplices in subsets of the unit
  cube.
\newblock {\em Isr. J. Math.}, 2025.
\newblock https://doi.org/10.1007/s11856-024-2710-1.

\bibitem{MR4171366}
Polona Durcik and Christoph Thiele.
\newblock Singular {B}rascamp-{L}ieb inequalities with cubical structure.
\newblock {\em Bull. Lond. Math. Soc.}, 52(2):283--298, 2020.

\bibitem{MR4390229}
Polona Durcik and Christoph Thiele.
\newblock Singular {B}rascamp-{L}ieb: a survey.
\newblock In {\em Geometric aspects of harmonic analysis}, volume~45 of {\em
  Springer INdAM Ser.}, pages 321--349. Springer, Cham, 2021.

\bibitem{MR2539560}
Bernard Host.
\newblock Ergodic seminorms for commuting transformations and applications.
\newblock {\em Studia Math.}, 195(1):31--49, 2009.

\bibitem{MR1389625}
Roger~L. Jones, Iosif~V. Ostrovskii, and Joseph~M. Rosenblatt.
\newblock Square functions in ergodic theory.
\newblock {\em Ergodic Theory Dynam. Systems}, 16(2):267--305, 1996.

\bibitem{MR2434308}
Roger~L. Jones, Andreas Seeger, and James Wright.
\newblock Strong variational and jump inequalities in harmonic analysis.
\newblock {\em Trans. Amer. Math. Soc.}, 360(12):6711--6742, 2008.

\bibitem{MR3480347}
Vjekoslav Kova\v{c}.
\newblock Quantitative norm convergence of double ergodic averages associated
  with two commuting group actions.
\newblock {\em Ergodic Theory Dynam. Systems}, 36(3):860--874, 2016.

\bibitem{MR4413747}
Ben Krause, Mariusz Mirek, and Terence Tao.
\newblock Pointwise ergodic theorems for non-conventional bilinear polynomial
  averages.
\newblock {\em Ann. of Math. (2)}, 195(3):997--1109, 2022.

\bibitem{MR4478297}
Camil Muscalu and Yujia Zhai.
\newblock Five-linear singular integral estimates of {B}rascamp-{L}ieb-type.
\newblock {\em Anal. PDE}, 15(4):1011--1095, 2022.

\bibitem{MR2408398}
Terence Tao.
\newblock Norm convergence of multiple ergodic averages for commuting
  transformations.
\newblock {\em Ergodic Theory Dynam. Systems}, 28(2):657--688, 2008.

\bibitem{MR2912715}
Miguel~N. Walsh.
\newblock Norm convergence of nilpotent ergodic averages.
\newblock {\em Ann. of Math. (2)}, 175(3):1667--1688, 2012.

\end{thebibliography}
\bibliographystyle{plain}

\end{document}